 \newcommand\id{\mathrm{id}}
 \newcommand\tr{\operatorname{tr}}
 \newcommand\qRa{\quad\Rightarrow\quad}
 \newcommand\op{\oplus}
 \renewcommand\Im{\mathfrak{Im}}
 \renewcommand\Re{\mathfrak{Re}}
 \newcommand\fa{{\mathfrak a}} 
 \newcommand\fc{{\mathfrak c}}
 \newcommand\fe{{\mathfrak e}}
 \newcommand\fk{{\mathfrak k}}
 \newcommand\fn{{\mathfrak n}}
 \newcommand\fr{{\mathfrak r}}
 \newcommand\fs{{\mathfrak s}}
 \newcommand\fsl{\mathfrak{sl}}
 \newcommand\fso{\mathfrak{so}}
 \newcommand\fv{{\mathfrak v}}
 \newcommand\fX{{\mathfrak X}}
 \newcommand\cD{{\mathcal D}}
 \newcommand\cE{{\mathcal E}}
 \newcommand\cQ{{\mathcal Q}}
 \newcommand\sfv{\mathsf{v}}
 \newcommand\sfw{\mathsf{w}}
 \newcommand\sfH{\mathsf{H}}
 \newcommand\sfT{\mathsf{T}}
 \newcommand\sfX{\mathsf{X}}
 \newcommand\sfY{\mathsf{Y}}
 \newcommand\sfZ{\mathsf{Z}}
 \newcommand\bbA{{\mathbb A}}
 \newcommand\bbC{{\mathbb C}}
 \newcommand\bbH{{\mathbb H}}
 \newcommand\bbP{{\mathbb P}}
 \newcommand\bbR{{\mathbb R}}
 \newcommand\sgn{\mathrm{sgn}}
 \newcommand\tspan{\mathrm{span}}
 \newcommand\Sym{\mathrm{Sym}}
 \newcommand\Ben{\begin{enumerate}}
 \newcommand\Een{\end{enumerate}}
 \newcommand\Bex{\begin{example}}
 \newcommand\Eex{\end{example}}
 \newcommand\GL{\operatorname{GL}}
\def\tO{\text{O}}
 \newcommand\SL{\mathrm{SL}}
 \newcommand\SU{\text{SU}}
 \newcommand\fsu{\mathfrak{su}}
 \newcommand\ad{{\rm ad}}
 \newcommand\Ad{{\rm Ad}}
 \def\assoc/{associative}
 \def\arb/{arbitrary}
 \def\btw/{between}
 \def\coeff/{coefficient}
 \def\cohom/{cohomology}
 \def\coord/{coordinate}
 \def\coordsys/{coordinate system}
 \def\cpt/{compact}
 \def\cred/{completely reducible}
 \def\cts/{continuous}
 \def\dga/{differential-graded algebra}
 \def\dR/{de Rham}
 \def\Euc/{Euclidean} 
 \def\grp/{group}
 \def\hom/{homomorphism}
 \def\inv/{invariant}
 \def\iso/{isomorphism}
 \def\La/{Lie algebra}
 \def\Lag/{Lagrangian Grassmannian}
 \def\LG/{Lie group}
 \def\MA/{Monge--Amp\`ere}
 \def\MC/{Maurer--Cartan}
 \def\lintr/{linear transformation} 
 \def\mfld/{manifold}
 \def\nb/{normal bundle}
 \def\nbd/{neighbourhood}
 \def\nondeg/{non-degenerate}
 \def\posdef/{positive definite}
 \def\pu/{partition of unity}
 \def\rep/{representation}
 \def\Riem/{Riemannian}
 \def\sg/{subgroup}
 \def\ss/{semi-simple}
 \def\inv/{invariant}
 \def\irr/{irreducible}
 \def\Jacid/{Jacobi identity}
 \def\li/{linearly independent}
 \def\nd/{nowhere dependent}
 \def\nz/{nowhere zero}
 \def\on/{orthonormal}
 \def\onb/{\on/ basis}
 \def\orc/{\orth/ complement}
 \def\orth/{orthogonal}
 \def\orp/{\orth/ projection}
 \def\pde/{partial differential equation}
 \def\resp/{respectively}
 \def\seq/{sequence}
 \def\std/{standard}
 \def\SW/{Stiefel-Whitney}
 \def\uc/{universal cover}
 \def\vb/{vector bundle}
 \def\vf/{vector field}
 \def\vs/{vector space}
 \def\wrt/{with respect to}
 \renewcommand\mod{\,{\rm mod}\ }
 \renewcommand\dim{{\rm dim}}
\newtheorem{theorem}{Theorem}[section]
\newtheorem{lemma}[theorem]{Lemma}
\newtheorem{cor}[theorem]{Corollary}
\newtheorem{prop}[theorem]{Proposition}
\theoremstyle{definition}
\newtheorem{defn}[theorem]{Definition}
\newtheorem{example}[theorem]{Example}
\theoremstyle{remark}
\newtheorem{remark}[theorem]{Remark}
\numberwithin{equation}{section}
\newcommand\diag{\operatorname{diag}}
\newcommand\bi{{\bf i}}
\newcommand\bj{{\bf j}}
\newcommand\bk{{\bf k}}
\newcommand{\spn}[1]{\langle#1\rangle}
\newcommand\Int{\operatorname{Int}}
\title{Homogeneous Levi non-degenerate hypersurfaces in $\bbC^3$}
\author{Boris Doubrov}
\address{Faculty of Mathematics and Mechanics, Belarusian State 
University, Nezavisimosti ave. 4, 220050, Minsk, Belarus}
\email{doubrov@bsu.by}
\author{Alexandr Medvedev}
\address{International School for Advanced Studies, via Bonomea 
265, Trieste	34136, Italy}
\email{amedvedev@sissa.it}
\author{Dennis The}
\address{Department of Mathematics and Statistics, University of Troms\o, 90-37, Norway; Fakult\"at f\"ur Mathematik, Universit\"at Wien, Oskar-Morgenstern-Platz 1, 1090 Wien, \"Osterreich}
\email{dennis.the@uit.no}
\subjclass[2010]{Primary: 32V40; Secondary: 32V05, 58J70, 53A15}
\keywords{Real hypersurfaces in complex manifolds, symmetry algebra, homogeneous, integrable Legendrian contact structures}
\begin{document}
\begin{abstract}
We classify all (locally) homogeneous Levi non-degenerate real hypersurfaces in $\bbC^3$ with symmetry algebra of dimension $\ge 6$.
\end{abstract}

\maketitle

\section{Introduction}

The main goal of this paper is to provide the complete (local) classification of \emph{multiply-transitive} (Levi non-degenerate) real hypersurfaces in $\bbC^3$, i.e.\ hypersurfaces with transitive symmetry algebra and stabilizer of dimension $\ge 1$. It is known~\cite{Tanaka1962,ChernMoser1974} that any real hypersurface in $\bbC^n$ with non-degenerate Levi form has symmetry algebra of dimension at most $n(n+2)$, which is achieved if and only if it is locally equivalent (under biholomorphic transformations) to a hyperquadric given by:
 \[
 \Im(w) = z_1\bar z_1 \pm \dots \pm z_{n-1}\bar z_{n-1}.
 \]
 In $\bbC^3$, the next possible dimension of the symmetry algebra for a Levi non-degenerate hypersurface is~$8$,  which is achieved for the so-called Winkelmann hypersurface~\cite{Wink1995}:
 \begin{align} \label{E:Wink4}
 \Im(w+\bar z_1 z_2) = |z_1|^4,
 \end{align}
where $(z_1,z_2,w)$ are holomorphic coordinates in $\bbC^3$. This real hypersurface has 8-dimensional symmetry algebra transitive on the hypersurface as well and on both open (unbounded) domains it splits $\bbC^3$.  We show in this paper that hyperquadrics and the Winkelmann hypersurface are the only homogeneous Levi non-degenerate hypersurfaces in $\bbC^3$ whose symmetry algebras have open orbits in $\bbC^3$.

The analogous classification result in $\bbC^2$ was obtained by \'Elie Cartan in his pioneering works~\cite{Cartan1932a,Cartan1932b} on this subject. He also used this result to prove that the only bounded homogeneous domain in $\bbC^2$ is the interior of a hypersphere. He claimed to prove a similar result for bounded homogeneous domains in $\bbC^3$, but
the details of this proof were never published and seem to be hidden in the archives of his notes\footnote{Private communication with Robert Bryant.}. This led him to believe~\cite{Cartan1935} that the only bounded homogeneous domains in $\bbC^n$ for any $n\ge 2$ are given by symmetric homogeneous spaces. However, this proved to be not correct, and the first counterexample was discovered by Piatetski-Shapiro in 1959~\cite{Piat-Sh1959}. 
 
Levi non-degenerate hypersurfaces in $\bbC^3$ with large symmetry algebras were extensively studied in a series of papers by A.V.~Loboda. He classified all Levi non-degenerate hypersurfaces with 7-dimensional symmetry algebra~\cite{Loboda2001a,Loboda2001b}, as well as all hypersurfaces with 6-dimensional symmetry algebra and positive definite Levi form~\cite{Loboda2003}. In our paper, we complete the classification of all multiply-transitive hypersurfaces in $\bbC^3$ by providing the full list of Levi indefinite hypersurfaces in $\bbC^3$ with 6-dimensional symmetry algebra. We also correct the Levi definite list \cite[Theorem 3]{Loboda2003} by adding one missing hypersurface with 6-dimensional symmetry algebra:
 \[
 v = \frac{x_2^2}{1+x_1} -\ln(1+x_1).
 \]
Here $z_1 = x_1+iy_1$, $z_2=x_2+iy_2$, $w=u+iv$ so this hypersurface is {\em tubular} (see Section~\ref{S:tubular} for the definition).  This corresponds to the Levi definite real form of case D.6-1 in~\cite{DMT2014} -- see also Table~\ref{F:affineD}. The symmetry algebra here is isomorphic to the semidirect product of $\fsl(2,\bbR)$ and the 3-dimensional Heisenberg algebra.  In Table~\ref{F:Loboda}, we match Loboda's classifications with our results.

The main idea of our classification approach is to pass from Levi non-degenerate hypersurfaces in $\bbC^n$ to their complex analogue, which turns out to be a complete system of 2nd order PDEs on one function of $(n-1)$ independent variables (see \S \ref{S:Complexification}). Such systems of PDEs have the same dimension for their symmetry algebra, which is multiply-transitive on the first jet-space $J^1(\bbC^{n-1},\bbC)$ if and only if the corresponding real hypersurface in $\bbC^n$ is multiply-transitive.

Geometrically, any Levi non-degenerate hypersurface $M\subset \bbC^n$ inherits a natural CR structure of codimension~1, which consists of a contact distribution $C\subset TM$ equipped with a complex structure $J\colon C\to C$. This complex structure is compatible with the natural conformal symplectic form on $C$ and integrable. Both these conditions are equivalent to the fact that the eigenspaces $J(i)$ and $J(-i)$ of the operator $J$ on the complexification $C^{\bbC}$ should be integrable subdistributions of the complexified contact distribution. 

The corresponding complete systems of second order PDEs are encoded as complex analytic manifolds of dimension $2n-1$ equipped with contact distribution $C$ decomposed into the direct sum of two completely integrable subdistributions $E$ and $V$.  Such structures are called \emph{integrable Legendrian contact (ILC) structures} and they were studied in detail in~\cite{DMT2014}.  For $n=3$, the fundamental invariant obstructing flatness\footnote{Flatness refers to being equivalent under point transformations to the trivial PDE system $u_{jk} = 0$.} is harmonic curvature $\kappa_H$, which manifests as a binary quartic field \cite[(3.3)]{DMT2014}, so one has a {\em Petrov-like classification} based on its (pointwise) root type.  For CR structures, this is the complexification of the degree four part of the Chern--Moser normal equation \cite{ELS1999}.

All multiply-transitive ILC structures in dimension five (in particular having symmetry algebra of dimension $\ge 6$) were classified in~\cite{DMT2014} and organized according to their Petrov type.  In particular, only types III (triple root), D (two double roots), N (quadruple root), or O (flat case) arise.
Non-flat multiply-transitive CR structures necessarily arise as real forms of ILC structures only of types D or N (Corollary \ref{C:CR-DN}).

Any multiply-transitive ILC structure can be encoded by certain algebraic data $(\fs,\fk;\fe,\fv)$, which includes the symmetry algebra $\fs$, two subalgebras $\fe$ and $\fv$ that correspond to $E$ and $V$, and the isotropy subalgebra $\fk = \fe \cap \fv$ of dimension $\geq 1$.  In this paper, we compute CR real forms of this data, which is equivalent to computing anti-involutions $\varphi\colon \fs \to \fs$ that preserve $\fk$ and swap $\fe$ and $\fv$.  (See \S \ref{S:RealForms} and Table \ref{F:AI-reps}.)  Each such real form uniquely defines the local structure of the CR geometry on the homogeneous real hypersurface.  

 The main difficulty is then to find the local equations of real hypersurfaces realizing this algebraic data. To go from an algebraic model to a local realization, we use several techniques.  In \S \ref{S:tubular}, we identify tubular hypersurfaces and, in particular, those that correspond to {\em affine homogeneous} hypersurfaces in $\bbA^3$ (see Tables \ref{F:affineN} and \ref{F:affineD}).  For example, the Winkelmann hypersurface corresponds to the surface in $\bbA^3$ given by the equation $u=xy+x^4$.  In \S \ref{S:Cartan}, we discuss the so-called {\em Cartan hypersurfaces}, which have semisimple symmetry algebra and are treated uniformly in this paper, along with certain related quaternionic models.  Finally, in \S \ref{S:Winkelmann} all remaining local models can be covered by hypersurfaces of \emph{Winkelmann type}, which are given by
 \[
 \Im(w+\bar z_1 z_2) = F(z_1,\bar z_1),
 \]
for some real-valued analytic function $F$.  We can formulate the main result of our paper as follows.

\begin{theorem}
Any multiply-transitive Levi non-degenerate hypersurface in $\bbC^3$ is locally biholomorphically equivalent to one of the following:
\begin{enumerate}
\item the maximally symmetric hypersurfaces $\Im(w) = z_1\bar z_1 \pm z_2\bar z_2$ in $\bbC^3$ with $15$-dimensional symmetry algebra.
\item tubular hypersurfaces listed in Tables \ref{F:affineN} \& \ref{F:affineD} with symmetry algebras of dimension $6$, $7$, or $8$.
\item Cartan hypersurfaces \eqref{E:Cartan-hyp} (see also Table \ref{F:Cartan-hyp}) or the quaternionic models \eqref{E:H-model1}.  These all have symmetry algebra a real form of $\fso(4,\bbC) \cong \fsl(2,\bbC) \times \fsl(2,\bbC)$.
\item hypersurfaces of Winkelmann type given in Table \ref{F:Wink}, having 6-dimensional symmetry algebra:
 \begin{enumerate}
 \item[(i)] $\Im(w+\bar z_1 z_2) = (z_1)^\alpha (\bar{z}_1)^{\bar \alpha}$, where $\alpha\in\bbC \backslash \{ -1, 0, 1, 2\}$;
 \item[(ii)] $\Im(w+\bar z_1 z_2) = \exp(z_1+\bar z_1)$;
 \item[(iii)] $\Im(w+\bar z_1 z_2) = \ln(z_1)\ln(\bar z_1)$.
 \end{enumerate}
 \end{enumerate}
 \end{theorem}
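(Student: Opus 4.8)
The plan is to leverage the complexification correspondence described above, which turns the classification of Levi non-degenerate hypersurfaces $M \subset \bbC^3$ with symmetry algebra of dimension $\ge 6$ into a problem about five-dimensional ILC structures. First I would dispose of the flat case: if the harmonic curvature $\kappa_H$ vanishes identically, then the ILC structure is locally the trivial PDE system $u_{jk}=0$, whose CR real forms are exactly the hyperquadrics $\Im(w) = z_1\bar z_1 \pm z_2\bar z_2$, giving item~(1). So assume $\kappa_H \not\equiv 0$. Multiple transitivity forces $\kappa_H$ to have constant root type, and by the \cite{DMT2014} classification of multiply-transitive five-dimensional ILC structures only Petrov types III, D, N occur in the non-flat case; by Corollary~\ref{C:CR-DN} type III admits no CR real form, so $M$ is a real form of a multiply-transitive ILC structure of type D or N.

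Next I would go through the finite list of type D and type N multiply-transitive ILC models from \cite{DMT2014}, each encoded by algebraic data $(\fs,\fk;\fe,\fv)$. For each such model, the CR real forms correspond to anti-involutions $\varphi\colon\fs\to\fs$ fixing $\fk$ and interchanging $\fe$ and $\fv$, up to conjugation by automorphisms of the ILC data. This is the Lie-theoretic step carried out in \S\ref{S:RealForms}: it produces a finite list of candidate real CR structures, tabulated in Table~\ref{F:AI-reps}, and shows that each such real form is determined up to local equivalence by this algebraic data.

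The main obstacle --- the part requiring genuine work rather than bookkeeping --- is passing from each abstract real model $(\fs,\varphi)$ to an explicit local defining equation of a hypersurface in $\bbC^3$ that realizes it. I would treat this case-by-case with three complementary techniques. (a)~When the real symmetry algebra contains a suitable abelian subalgebra acting by real translations, the hypersurface is \emph{tubular} over an affine-homogeneous surface in $\bbA^3$; integrating the residual affine symmetries yields the explicit equations of Tables~\ref{F:affineN} and~\ref{F:affineD}, accounting for item~(2) (including the Winkelmann hypersurface, dual to $u = xy + x^4$). (b)~When $\fs$ is semisimple, necessarily a real form of $\fso(4,\bbC)\cong\fsl(2,\bbC)\times\fsl(2,\bbC)$, I would exhibit the \emph{Cartan hypersurfaces} \eqref{E:Cartan-hyp} and the quaternionic models \eqref{E:H-model1} directly and verify their symmetry algebras, giving item~(3). (c)~The remaining models all admit a realization of \emph{Winkelmann type} $\Im(w+\bar z_1 z_2) = F(z_1,\bar z_1)$; here the structure equations for the prescribed symmetry algebra collapse to an ODE for $F$ whose solutions are $(z_1)^\alpha(\bar z_1)^{\bar\alpha}$, $\exp(z_1+\bar z_1)$, and $\ln(z_1)\ln(\bar z_1)$, giving item~(4) and Table~\ref{F:Wink}. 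In every case one must check Levi non-degeneracy, that the full symmetry algebra has exactly the predicted dimension (no accidental enlargement), and that distinct algebraic models give locally inequivalent hypersurfaces.

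Finally I would close the loop: confirm each type D and type N real model is realized by exactly one entry of the list, identify the degenerate parameter values $\alpha\in\{-1,0,1,2\}$ in (4)(i) as cases with larger symmetry or already appearing elsewhere, and cross-check against Loboda's partial classifications \cite{Loboda2001a,Loboda2001b,Loboda2003} as recorded in Table~\ref{F:Loboda} --- which en route recovers the previously missing Levi-definite hypersurface $v = x_2^2/(1+x_1) - \ln(1+x_1)$.
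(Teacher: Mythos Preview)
Your proposal follows essentially the same strategy as the paper: complexify to ILC structures, use the \cite{DMT2014} classification and Corollary~\ref{C:CR-DN} to reduce to types D and N, enumerate admissible anti-involutions (Table~\ref{F:AI-reps}), and then realize each real form via tubular, Cartan/quaternionic, or Winkelmann-type models.

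One point to sharpen in step~(a): you say the tubular hypersurfaces arise over \emph{affine-homogeneous} surfaces in $\bbA^3$, but several entries in Tables~\ref{F:affineN} and~\ref{F:affineD} have affine-\emph{in}homogeneous base (the CR symmetry algebra is strictly larger than the affine symmetries plus translations). The paper handles this by introducing an abstract Lie-algebraic criterion for tubularity --- a $\varphi$-stable abelian subalgebra $\fa\subset\fs$ satisfying (T.1)--(T.4) --- and then computing $\dim(N(\fa)/\fa)$ to determine the affine symmetry dimension of the base; the explicit base surfaces are found case by case rather than by quoting \cite{DKR1995}. Also, in step~(c) the paper does not solve an ODE for $F$ but simply writes down the three candidate $F$'s and verifies via \eqref{E:Wink-PDE} and the parameter relations \eqref{E:N62-params} that they exhaust the N.6-2 real forms.
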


\begin{remark} Among the Winkelmann type hypersurfaces, (ii) and (iii) are equivalent to the tubular hypersurfaces $u = x_1 x_2 + \exp(x_1)$ and $u = x_2\exp(x_1) - \frac{(x_1)^2}{2}$ respectively (see Tables \ref{F:Wink} \& \ref{F:affineN}), while (i) admits a tubular representation if and only if $\frac{(2\alpha-1)^2}{(\alpha+1)(\alpha-2)} \in \bbR$.  (See \S \ref{S:Winkelmann}.)
\end{remark}

Finally in \S \ref{sec:trans}, we prove:

\begin{theorem} \label{T:trans} Up to local biholomorphism, the only locally homogeneous Levi non-degenerate hypersurfaces in $\bbC^3$ whose symmetry algebra is transitive on an open subset of $\bbC^3$ are:
\begin{enumerate}
\item the hyperquadric $\Im(w) = z_1\bar z_1 \pm z_2\bar z_2$;
\item the Winkelmann hypersurface  $\Im(w+\bar z_1 z_2) = |z_1|^4$.
\end{enumerate}
\end{theorem}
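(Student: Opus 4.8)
\emph{Proof sketch.} We argue by inspection of the classification in the preceding Theorem. Write $\fr$ for the symmetry algebra of $M$, a real Lie algebra equal to the $\varphi$-fixed real form $\fs^\varphi$ of the complexified ILC symmetry algebra $\fs$ (cf.\ \S\ref{S:RealForms}), and realize it by holomorphic vector fields $Z=\sum_{j=1}^3 f_j\,\partial_{\zeta_j}$ on a neighbourhood of $M$ in $\bbC^3$, $\zeta=(z_1,z_2,w)$, with $\Re Z$ tangent to $M$; such realizations are produced in \S\ref{S:tubular}, \S\ref{S:Cartan}, and \S\ref{S:Winkelmann}. Since $\fr$ is transitive on the $5$-dimensional $M$ and the rank of the finite family $\{\Re Z:Z\in\fr\}$ is lower semicontinuous and $\le 6$, every $\fr$-orbit sufficiently close to $M$ has real dimension $5$ or $6$; hence $\fr$ is transitive on an open subset of $\bbC^3$ if and only if that rank equals $6$ at some (equivalently, a generic) point $q$ near $M$. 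Because $Z\mapsto\Re Z|_q$ is $\bbR$-linear and factors through $Z\mapsto(f_1(q),f_2(q),f_3(q))\in\bbC^3\cong\bbR^6$, the open-orbit condition is: the real span of $\{(f_1(q),f_2(q),f_3(q)):Z\in\fr\}$ is all of $\bbC^3$. In the algebraic language of \S\ref{S:RealForms} (identifying $\bbC^3$ locally with $S/V$, whose conjugate $S/E$ carries the coordinates $\bar\zeta$ with $\varphi(\fv)=\fe$, and writing $q=gV$) this reads $\Ad_g\,\fv+\Ad_{\varphi(g)}\,\fe=\fs$; at a point of $M$ one has $\varphi(g)=g$ and $\fv+\fe$ has codimension $1$ in $\fs$, consistent with the orbit there being $M$ itself.

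The two exceptional hypersurfaces satisfy the criterion. For the hyperquadric, $\SU(3,1)$ (or $\SU(2,2)$ when the Levi form is indefinite) acts on $\CP^3$ with the projective hyperquadric as its unique real-hypersurface orbit and the two complementary domains as open orbits; intersecting an affine chart $\bbC^3\subset\CP^3$ containing $M$, these descend to open $\fr$-orbits, one on each side of $M$. For the Winkelmann hypersurface this was recorded in the Introduction: its $8$-dimensional symmetry algebra is transitive on both open domains it bounds (equivalently, the rank criterion can be checked directly against its explicit automorphisms).

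It remains to show every other model fails, i.e.\ the rank above stays $\le 5$ near $M$. \emph{Tubular models.} If $M=\Sigma+i\bbR^3$ is a tube from Tables \ref{F:affineN}, \ref{F:affineD} not equivalent to a hyperquadric or to the Winkelmann hypersurface, then --- using the classification, which leaves no room for additional symmetries --- $\fr$ is spanned by the imaginary translations $\spn{\partial_{y_1},\partial_{y_2},\partial_v}$ and the holomorphic continuations of the affine vector fields of $\bbR^3$ tangent to $\Sigma$. Evaluating at $q=x^0+iy^0$, the imaginary translations contribute exactly $i\bbR^3$ while a lifted affine field $Y$ contributes $Y|_{x^0}$ modulo $i\bbR^3$, so the rank equals $3+\dim_\bbR\{Y|_{x^0}:Y\text{ affine and tangent to }\Sigma\}$; a direct inspection of the tables shows this algebra of affine fields is everywhere tangent to a codimension-one foliation of $\bbR^3$, whence the rank is $\le 5$. \emph{Winkelmann-type models.} Items (ii), (iii), and (i) for tubular values of $\alpha$ reduce to the previous case; for the remaining members of family (i) one exhibits, from the generators of \S\ref{S:Winkelmann}, a non-constant $\fr$-invariant real-analytic function near $M$, again forcing orbits of dimension $\le 5$. \emph{Cartan and quaternionic models.} For the Cartan hypersurfaces \eqref{E:Cartan-hyp} and the quaternionic models \eqref{E:H-model1}, $\fr$ is a real form of $\fso(4,\bbC)\cong\fsl(2,\bbC)\times\fsl(2,\bbC)$, and $\fv,\fe$ are the $3$-dimensional subalgebras prescribed by the type-D ILC datum with $\varphi$ the corresponding anti-involution of Table \ref{F:AI-reps}. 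One verifies that $\Ad_g\,\fv+\Ad_{\varphi(g)}\,\fe$ remains a proper subalgebra of $\fs$ for all $g$ in and near the real locus --- equivalently, that the $S$-action on the complex homogeneous space $\bbC^3\cong S/V$ admits a non-constant $\fr$-invariant rational function, assembled from the invariant theory of the two $\fsl(2,\bbC)$-factors together with $\varphi$ (a trace-type function on the underlying $\mathrm{SL}(2,\bbC)$) --- so the orbits again have dimension $\le 5$.

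The main obstacle is the last family: being semisimple, $\fr$ has no translations to exploit, so one must genuinely analyze the complex $S$-orbit structure on $\bbC^3$ and verify that the real form cannot fill out an open set, using only that $\varphi$ interchanges $\fe$ and $\fv$ and fixes $\fk$. A subsidiary delicate point is family (i), where the invariant must be shown to persist for \emph{every} admissible $\alpha\in\bbC\setminus\{-1,0,1,2\}$, including the complex (non-tubular) values, rather than merely generically.
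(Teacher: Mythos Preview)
Your tubular argument contains a genuine error.  You assert that for a tube $M=\Sigma+i\bbR^3$ (not equivalent to a hyperquadric or the Winkelmann hypersurface) the full CR symmetry algebra $\fr$ is spanned by the imaginary translations together with the holomorphic continuations of the \emph{real affine} vector fields tangent to $\Sigma$.  This is false: inspecting Tables~\ref{F:affineN} and~\ref{F:affineD}, almost every model has symmetries that are genuinely non-affine --- e.g.\ D.7 carries $i(z_1)^2\partial_{z_1}+2i\alpha z_1\partial_w$, and N.7-2 has a field quadratic in $z_1$ with a coefficient involving $w$.  In fact the affine subalgebra has dimension $3+\dim(N(\fa)/\fa)\le 5$ in every case of Table~\ref{s4:tbl_a}, so it is \emph{never} all of $\fr$ for a multiply-transitive model.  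The Winkelmann hypersurface is itself tubular (N.8, base $u=xy+x^4$) and has open orbits precisely because of its non-affine symmetries; if your argument were valid it would exclude this case too.  So the tubular step needs a different idea, not just a foliation tangent to the affine symmetries of the base.

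The paper proceeds along different lines.  First, an algebraic observation (Corollary~\ref{cor:center}): if the complex symmetry algebra $\fs$ has nontrivial centre, then any central element lies outside $\fe+\fv$ but satisfies $\exp(\epsilon Z)(\fv)=\fv$, so $\fe+T(\fv)\ne\fs$ for all nearby $T$ and $\Sym(M)$ cannot be transitive on any open set of $\bbC^3$.  This disposes of N.7-2, D.7 and D.6-1 in one stroke, with no case analysis of their (non-affine) realizations.  Second, for each remaining six-dimensional family (N.6-1, N.6-2, D.6-2, D.6-3 with $a^2=9$) the paper writes down an explicit $\bbR$-linear relation among the holomorphic symmetry fields valid at an \emph{arbitrary} point of $\bbC^3$, with coefficients depending only on one complex coordinate of that point; for D.6-3 with $a^2\ne 9$ it simply points to \S\ref{S:Cartan}, where every real-form orbit in $\bbC\bbP^3\setminus\cQ$ (respectively $\bbC\bbP^3\setminus\cQ^\#$) was already parametrized and seen to be five-dimensional.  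Your sketches for the Winkelmann-type and semisimple cases (``exhibit a non-constant invariant'', ``trace-type function from the two $\fsl(2,\bbC)$-factors'') are plausible in spirit but are assertions rather than arguments; the paper's route is concrete and short.
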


 Appendices \ref{A:AI}, \ref{A:tubular}, and \ref{A:Loboda} summarize our classification results for the dimension five case.  Finally, to illustrate our methods in a simpler case, in Appendix \ref{A:CR3} we derive Cartan's classification \cite[bottom of p.70]{Cartan1932a} of (non-flat) homogeneous CR structures in dimension three from the classification of (complex) 2nd order ODE that are homogeneous (in fact, {\em simply}-transitive) under point symmetries.

\section{Complexification of real submanifolds in $\bbC^n$}
\label{S:Complexification}

In this section, we mainly follow~\cite{Merker2008} to establish the relationship between real hypersurfaces in $\bbC^n$ and complete systems of 2nd order PDEs.

\subsection{Complete systems of PDEs defined by real hypersurfaces}\label{S:Complex}

Let $M$ be a real analytic submanifold in $\bbC^{n}$ given by 
 \[
 F_\alpha(z_1,\dots,z_n,\bar z_1, \dots, \bar z_n) = 0,
 \]
where $F_\alpha$ are real analytic functions of the holomorphic and antiholomorphic coordinates. Denote by $\bar\bbC^n$ another copy of $\bbC^n$ with the opposite complex structure, so that the map $\bbC^n\to\bar\bbC^n$ given by $(z_1,\dots,z_n)\mapsto (\bar z_1, \dots, \bar z_n)$ is holomorphic.  Let $(a_1,\dots,a_n)$ be standard coordinates on $\bar\bbC^n$ and define a complex submanifold $M^{c}$ in $\bbC^{n}\times \bar\bbC^{n}$ by:
 \begin{equation}\label{eq:c}
 F_\alpha(z_1,\dots,z_n,a_1,\dots,a_n)=0.
 \end{equation}
 
\begin{defn}
We call $M^{c}$~\emph{the complexification} of a real analytic submanifold in~$\bbC^n$.
\end{defn}

We can regard \eqref{eq:c} as an $n$-parameter family of submanifolds in $\bbC^{n}$.  Let $\cE(M)$ be the corresponding finite-type PDE system whose solution space coincides with this family.
\begin{example}\label{Ex:Complex}
Take the Winkelmann hypersurface $\Im(w+\bar z_1 z_2)=|z_1|^4$.  Its complexification is
\begin{align*}
w - b + a_1 z_2 - z_1 a_2 = 2i(a_1z_1 )^2,
\end{align*}
where $(a_1,a_2,b)$ are holomorphic coordinates on $\bar\bbC^3$.  Regard $w$ as a function of the two independent variables $z_1$ and $z_2$.  Letting $w_j = \frac{\partial w}{\partial z_j}$ and $w_{jk} = \frac{\partial^2 w}{\partial z_j \partial z_k}$, we obtain
 \begin{align*}
 w_1 = a_2 + 4i (a_1)^2 z_1, \quad w_2 = -a_1, \quad w_{11} = 4i (a_1)^2, \quad w_{12} = w_{22} = 0.
 \end{align*}
 Excluding the parameters $(a_1,a_2,b)$, we obtain the PDE system
\[
w_{11} = 4iw_2^2,\quad w_{12}=0,\quad w_{22} = 0.
\]
\end{example}

\begin{prop}\label{p:pde}
Let $M$ be a Levi non-degenerate codimension~1 real analytic submanifold in $\bbC^n$. Then $\cE(M)$ is a complete system of 2nd order PDEs on one function of $(n-1)$ variables.
\end{prop}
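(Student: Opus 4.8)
The plan is to put the defining equation of $M$ into a complex form solved for one coordinate, complexify it, prolong, and then use Levi non-degeneracy to eliminate the $n$ parameters. First I would choose a real analytic defining function $\rho$ near a reference point $p\in M$; since $d\rho\neq 0$ at $p$ and $\rho$ is real-valued, after a $\bbC$-linear change of coordinates we may assume $\partial\rho/\partial z_n\neq 0$ at $p$, so writing $z=(z',z_n)$ with $z'=(z_1,\dots,z_{n-1})$, relabelling $z_n=w$, and applying the holomorphic implicit function theorem to the complexified equation $\rho(z',w,a',b)=0$, we may solve for $w$. This yields the complex defining equation $w=\Theta(z',\bar z',\bar w)$ of $M$ and hence the complexification $M^c\colon w=\Theta(z',a',b)$, a holomorphic $n$-parameter family (parameters $(a',b)$) with $w$ a function of the $n-1$ independent variables $z'$. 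This already produces the ``one function of $(n-1)$ variables'' part of the statement, and $\cE(M)$ is the PDE system of this family.

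Next I would prolong: differentiating $w=\Theta(z',a',b)$ in the $z_j$ gives $w_j=\Theta_{z_j}(z',a',b)$ for $1\le j\le n-1$, and a further differentiation gives $w_{jk}=\Theta_{z_jz_k}(z',a',b)$ for $1\le j\le k\le n-1$. The key point is to solve the first $n$ relations (those for $w$ and $w_1,\dots,w_{n-1}$) for the $n$ parameters $(a',b)$ as functions of $(z',w,w_1,\dots,w_{n-1})$; by the inverse function theorem this succeeds near $p$ provided the holomorphic map $\Phi\colon(a',b)\mapsto\bigl(\Theta,\Theta_{z_1},\dots,\Theta_{z_{n-1}}\bigr)(z',a',b)$ has invertible differential at the reference parameter values. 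To check this, I would place $p$ at the origin and normalize $\rho$ so that $w=\bar w+2i\sum_{j,k=1}^{n-1}h_{j\bar k}z_j\bar z_k+O(3)$, whence $\Theta(z',a',b)=b+2i\sum_{j,k}h_{j\bar k}z_ja_k+O(3)$; a direct computation then shows that, up to a permutation of rows and columns, $d\Phi|_p$ is block-diagonal with blocks $1$ and $2iH$, where $H=(h_{j\bar k})$ is the Levi-form matrix of $M$ at $p$. Hence $\det d\Phi|_p=\pm(2i)^{n-1}\det H\neq 0$ precisely because $M$ is Levi non-degenerate.

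It then remains to substitute the resulting expressions $a'=A(z',w,w_1,\dots,w_{n-1})$ and $b=B(z',w,w_1,\dots,w_{n-1})$ into the second-order relations, obtaining $w_{jk}=G_{jk}(z',w,w_1,\dots,w_{n-1})$ for all $1\le j\le k\le n-1$. Since every second derivative of $w$ is thereby expressed through the $1$-jet $(z',w,w_1,\dots,w_{n-1})$, this is by definition a complete system of $2$nd order PDEs on one function of $n-1$ variables, namely $\cE(M)$. Finiteness of type is automatic, and since $\Phi$ is a local biholomorphism the $n$ parameters are essential, so the solution space is exactly $n$-dimensional and a solution is determined by its $1$-jet at a point.

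The only step that is not routine bookkeeping is the identification of $d\Phi|_p$ with (a lower-order perturbation of) the Levi form of $M$: this is where the hypothesis genuinely enters, and passing to the normal form above reduces it to a short explicit computation. The initial coordinate reduction and the final substitution are standard.
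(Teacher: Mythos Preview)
Your proof is correct and follows essentially the same approach as the paper's: normalize the defining equation so that the Levi form appears at lowest order, complexify, differentiate once, and use non-degeneracy of the Levi form to invert the map $(a',b)\mapsto(w,w_1,\dots,w_{n-1})$ via the implicit/inverse function theorem. The only cosmetic difference is that the paper invokes the Chern--Moser normal form $2\Im(w)=\sum\epsilon_j|z_j|^2+F$ with $F=O(3)$ directly (citing~\cite{ChernMoser1974}), which makes the Jacobian computation trivial at the origin, whereas you carry out the second-order normalization by hand and compute $d\Phi|_p$ explicitly; your version is slightly more self-contained, theirs slightly shorter.
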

\begin{proof}
As shown in~\cite{ChernMoser1974}, locally we can always find a holomorphic coordinate system $(z_1,...,z_{n-1},w)$ such that $M$ is given as:
\[
2\Im(w) = \epsilon_1 \bar z_1 z_1 +\dots + \epsilon_{n-1} \bar z_{n-1} z_{n-1} + F(z_1,\bar z_1, \dots, z_{n-1}, \bar z_{n-1},\bar w),
\]
where $F$ is an analytic function whose Taylor series contains only terms of degree 3 and higher and $\epsilon_j=\pm1$ for $j=1,\dots,n-1$.  The complexification $M^c$ is given by:
\begin{equation}\label{eq:w}
w = b + \epsilon_1 a_1 z_1 + \dots + \epsilon_{n-1} a_{n-1}z_{n-1} + F(z_1,\dots,z_{n-1},a_1,\dots,a_{n-1},b).
\end{equation}
Regarding $w$ as a function of $z_1,\dots,z_{n-1}$, and differentiating \eqref{eq:w} with respect to $z_j$, we get
 \begin{equation}\label{eq:wi}
w_j := \frac{\partial w}{\partial z_j} = \epsilon_j a_j + \frac{\partial F}{\partial z_j}.
\end{equation}

By the implicit function theorem, we can uniquely resolve equations~\eqref{eq:w},~\eqref{eq:wi} in $(a_1,\dots,a_{n-1},b)$ in the neighbourhood of the origin in $\bbC^n$.  Differentiating~\eqref{eq:wi} one more time and substituting there solutions for $(a_1,\dots,a_{n-1},b)$ we obtain the complete system of PDEs of 2nd order.
\end{proof}

It is clear from the construction that if we choose different holomorphic coordinates, this will result in a system of PDEs point equivalent to the initial system. 

\subsection{ILC structures}

Let us recall~\cite{DMT2014} that an \emph{integrable Legendrian contact (ILC) structure} on an odd-dimensional manifold is defined as a contact distribution $C$ decomposed into a sum $C=E\oplus V$ of two completely integrable distributions that are Lagrangian with respect to the (conformal) symplectic form on $C$. 

The above complete system $\cE(M)$ of 2nd order PDEs naturally defines an ILC structure on the space $J^1=J^1(\bbC^{n},n-1)$ of 1-jets of (complex analytic) hypersurfaces in $\bbC^n$. Note that geometrically this space of 1-jets can also be identified with the projectivization of $T^*\bbC^n$. The space $J^1$ carries a natural contact structure $C$. The distribution $V$ is defined as the tangent distribution to the fibers of the projection $J^1\to \bbC^n$. The second complementary integrable
distribution $E\subset C$ is defined by the equation $\cE(M)$ itself. Namely, its fibers are exactly the 1-jets of all its solutions.  (As each solution is uniquely defined by its 1st order derivatives, we see that through each point in $J^1$ goes a unique 1-jet of a solution.)

Suppose the equation $\cE(M)$ is explicitly written as:
\[
\frac{\partial^2 w}{\partial z_j \partial z_k} = f_{jk}(z,w,\partial w),\quad 1\le j,k\le n-1.
\]
On $J^1$, introduce local holomorphic coordinates $(z_j,w,p_j)$, $1\le j\le n-1$, where $p_j=\frac{\partial w}{\partial z_j}$, so that the contact distribution $C$ on $J^1$ is given as:
\[
C = \{ dw - p_1 dz_1 - \dots - p_{n-1}dz_{n-1}  = 0 \}.
\]
The distributions $E$ and $V$ have the form:
\[
 E = \tspan\{ \cD_j := \partial_{z_j} + p_j \partial_w + f_{jk} \partial_{p_k} \}, \qquad V = \tspan\{ \partial_{p_j} \}.
\]
The integrability conditions of $\cE(M)$ ensure that the distribution $E$ is indeed completely integrable.

We can also define a natural ILC structure on $M^c\subset \bbC^n\times \bar\bbC^n$ as follows.  Consider two projections $\bar\pi\colon M^c\to\bar\bbC^n$ and $\pi\colon M^c\to\bbC^n$. They are submersions if and only if the hypersurface $M$ is \emph{holomorphically non-degenerate}, i.e.\ there are no non-zero holomorphic vector fields on $\bbC^n$ tangent to $M$. Define two completely integrable distributions $E$ and $V$ on $M^c$ as tangent distributions to fibers of the projections $\bar\pi$ and $\pi$. Define also $C$ as the sum $E+V$. Similar to Proposition~\ref{p:pde}, it is possible to show that Levi non-degeneracy of $M$ implies that $C$ is a contact distribution on $M^c$.

Instead, we shall show that $M^c$ can be (locally) identified with $J^1$ such that the pairs of distributions $(E,V)$ on $J^1$ and $M^c$  match. Let us assume that $M^c$ is given by:
\[
F(z_1,\dots,z_n,a_1,\dots,a_n)=0.
\] 
Let $(z,a)$ be a point in $M^c$. Consider now a codimension 1 analytic submanifold $S_a\subset \bbC^n$ given by the above equation, where $a\in\bar\bbC^n$ is fixed. Define the map:
\[
\Phi\colon M^c \to J^1, \quad (z,a)\mapsto j^1_z(S_a) = T_zS_a.
\]
Note that $S_a=\pi(\bar\pi^{-1}(a))$, and all such submanifolds are by definition all solutions of $\cE(M)$. This immediately implies that $\Phi$ is a local biholomorphism establishing the equivalence of the pairs of distributions $(E,V)$ on $M^c$ and $J^1$.

 \subsection{Symmetry algebras}

We recall that a holomorphic vector field $X$ on $\bbC^n$ is called an \emph{(infinitesimal) CR symmetry} of the real analytic submanifold $M\subset \bbC^n$ if $X$ is tangent to $M$. This is equivalent to $(X+\bar X)_{M}=0$, or to the fact that the local flow generated by $X$ preserves $M$. The set of all CR symmetries of $M$ forms a real Lie algebra denoted by $\Sym(M)$. We say that $M$ is \emph{(infinitesimally) homogeneous} if $\Sym(M)$ is transitive on $M$, i.e. it spans $TM$ at each point of $M$.

Let $M^{c}\subset \bbC^n\times \bar\bbC^n$ be the complexification of $M$. Denote by $\Sym(M^{c})$ all holomorphic vector fields of the form $X+Y$ tangent to $M^{c}$, where $X$ and $Y$ are holomorphic vector fields on $\bbC^n$ and $\bar\bbC^n$ respectively. It is clear that $\Sym(M^{c})$ is a complex Lie algebra of holomorphic vector fields. We say that $M^{c}$ is \emph{(infinitesimally) homogeneous}, if $\Sym(M^c)$ acts transitively on $M^c$.

\begin{prop}[{\cite[Corollary 6.36]{Merker2008}}]
\label{P:sym}
Assume $M$ is holomorphically non-degenerate (i.e. there is no holomorphic vector field $X\subset TM$). Then $\Sym(M^{c})$ is spanned (as a complex vector space) by vector fields $X+\bar X$, where $X\in \Sym(M)$. Thus, the complex Lie algebra $\Sym(M^{c})$ is a complexification of the real Lie algebra $\Sym(M)$.
\end{prop}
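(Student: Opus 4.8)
The plan is to derive the whole statement from a single antiholomorphic reflection on $\bbC^n\times\bar\bbC^n$ together with the identity theorem, treating $\Sym(M^c)$ as an abstract complex Lie algebra carrying an antilinear involution. First I would record how the reality of $M$ survives complexification. Writing $M=\{F_\alpha(z,\bar z)=0\}$ with the $F_\alpha$ real-valued, the condition $\overline{F_\alpha(z,\bar z)}=F_\alpha(z,\bar z)$ becomes, at the level of the holomorphic functions $F_\alpha(z,a)$ defining $M^c$, the coefficient identity $F_\alpha(z,a)=\overline{F_\alpha}(a,z)$ (conjugate the Taylor coefficients and swap the two groups of variables). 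Equivalently, $M^c$ is invariant under the antiholomorphic involution $\sigma(z,a)=(\bar a,\bar z)$, whose fixed locus is the diagonal $\{a=\bar z\}$; on this diagonal $M^c$ restricts to a copy of $M$ embedded by $z\mapsto(z,\bar z)$. The key geometric input is that this copy of $M$ is a \emph{maximally totally real} submanifold of $M^c$: as a component of the fixed locus of $\sigma$ it is totally real, and $\dim_\bbR M=2n-1=\dim_\bbC M^c$. Hence, by the identity theorem, any holomorphic function on $M^c$ that vanishes along $M$ vanishes identically.

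Next I would introduce the induced antilinear involution on split vector fields. For $X=\sum_j X_j(z)\partial_{z_j}$ holomorphic on $\bbC^n$, let $X^*=\sum_j\overline{X_j}(a)\partial_{a_j}$ be the field on $\bar\bbC^n$ obtained by conjugating coefficients, and define $Y^*$ for $Y$ on $\bar\bbC^n$ symmetrically, so $(X^*)^*=X$. On a split field set $\rho(X+Y)=Y^*+X^*$. Being the composition of coefficient conjugation with the pullback by the biholomorphic swap $(z,a)\mapsto(a,z)$, the map $\rho$ is an antilinear, involutive Lie-algebra automorphism. Applying $\rho$ to a tangency relation $(X+Y)F_\alpha=\sum_\beta h_{\alpha\beta}F_\beta$ and using $F_\alpha=\overline{F_\alpha}(a,z)$ from the previous step gives $\rho(X+Y)F_\alpha=\sum_\beta \rho(h_{\alpha\beta})F_\beta$, so $\rho(X+Y)$ is again tangent to $M^c$; thus $\rho$ preserves $\Sym(M^c)$. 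Since $\rho$ is an antilinear involution of the complex Lie algebra $\Sym(M^c)$, the averaging $Z=\tfrac12(Z+\rho Z)+\tfrac12(Z-\rho Z)$ produces the decomposition $\Sym(M^c)=\fa\oplus i\fa$ with $\fa=\{Z:\rho Z=Z\}$ a real Lie subalgebra. In other words, $\Sym(M^c)$ is automatically the complexification of its real form $\fa$.

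It then remains to identify $\fa$ with $\Sym(M)$. A split field is $\rho$-fixed precisely when it has the form $X+X^*$, and I would show $X+X^*$ is tangent to $M^c$ if and only if $X\in\Sym(M)$. Restricting $X+X^*$ to the diagonal $\{a=\bar z\}$ gives exactly the real field $X+\bar X$ on $\bbC^n$, so tangency of $X+X^*$ to $M^c$ forces $X+\bar X$ to be tangent to $M$ (restrict the relation $(X+X^*)F_\alpha=\sum_\beta h_{\alpha\beta}F_\beta$ to the diagonal). Conversely, if $X+\bar X$ is tangent to $M$, then each holomorphic function $(X+X^*)F_\alpha$ vanishes along the diagonal copy of $M$ and hence, by the maximal total reality established in the first step, vanishes on all of $M^c$; so $X+X^*$ is tangent to $M^c$. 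This yields a real Lie-algebra isomorphism $\fa\cong\Sym(M)$, $X+X^*\leftrightarrow X$ (equivalently, the real symmetry $X+\bar X$), and combined with the previous paragraph shows $\Sym(M^c)=\tspan_\bbC\{X+\bar X:X\in\Sym(M)\}$ is the complexification of $\Sym(M)$.

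The main obstacle is the reflection/identity-theorem step: one must know that the diagonal copy of $M$ is a uniqueness set for holomorphic functions on $M^c$, so that vanishing there propagates to all of $M^c$. This is exactly where holomorphic non-degeneracy enters. As recorded earlier in the excerpt, it guarantees that the projections $\pi,\bar\pi$ are submersions, so that $M^c$ is an honest smooth complexification on which the identity theorem applies; it also ensures that no nonzero holomorphic vector field on $\bbC^n$ is tangent to $M$, which keeps the correspondence $X\leftrightarrow X+X^*$ faithful and prevents the real form $\fa$ from acquiring spurious directions. Once these nondegeneracy facts are in hand, the two directions of the tangency equivalence and the antilinear-involution decomposition combine to give the result with no further computation.
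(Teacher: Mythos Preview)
The paper does not give its own proof of this proposition; it simply cites it as Corollary~6.36 of Merker's paper and moves on. So there is nothing in the paper to compare your argument against.

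Your argument is essentially the standard one and is sound in its main steps: the antiholomorphic reflection $\sigma(z,a)=(\bar a,\bar z)$ preserves $M^c$, it induces an antilinear Lie-algebra involution $\rho$ on $\Sym(M^c)$, and this forces the complex decomposition $\Sym(M^c)=\fa\oplus i\fa$. The identification of the real form $\fa=\{X+X^*\}$ with $\Sym(M)$ via restriction to the maximally totally real diagonal $M\hookrightarrow M^c$, together with the identity theorem for the converse, is correct and is exactly how one fleshes out Merker's statement. This is also how the paper itself uses the reflection $\sigma$ later (see the end of \S2.4, where the induced anti-involution on $(\fs,\fk;\fe,\fv)$ is exactly your $\rho$).

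One small point: your last paragraph about where holomorphic nondegeneracy enters is a bit loose. The smoothness of $M^c$ and the validity of the identity theorem along the totally real diagonal do not themselves require that hypothesis; they hold for any smooth real-analytic hypersurface. In Merker's setting the hypothesis appears because his corollary is extracted from a more general machinery on the associated PDE system; for the Levi-nondegenerate hypersurfaces considered in this paper, holomorphic nondegeneracy is automatic, so the point is moot in context.
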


\begin{cor}
The submanifold $M$ is infinitesimally homogeneous if and only if so is the submanifold $M^c$.
\end{cor}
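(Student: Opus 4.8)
The plan is to deduce the corollary from Proposition \ref{P:sym} by comparing, pointwise, the real symmetry algebra along the totally real ``diagonal'' copy of $M$ inside $M^c$ with the full complex symmetry algebra of $M^c$.

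First I would set up the diagonal embedding $\iota\colon M\hookrightarrow M^c$, $z\mapsto(z,\bar z)$, which is the restriction to $M$ of the map $\delta\colon\bbC^n\to\bbC^n\times\bar\bbC^n$, $z\mapsto(z,\bar z)$. Since $M$ is a real hypersurface, $\dim_{\bbR}M=2n-1=\dim_{\bbC}M^c$, so $\iota$ realizes $M$ as a maximal totally real real-analytic submanifold of $M^c$; consequently $T_{\iota(z)}M^c$ is canonically the complexification of the real subspace $d\iota(T_zM)$. A short Wirtinger-calculus computation on $\bbC^n\times\bar\bbC^n$ (using $\delta^* z_j=z_j$, $\delta^* a_j=\bar z_j$, so that the complexified $d\delta$ sends $\partial_{z_j}\mapsto\partial_{z_j}$ and $\partial_{\bar z_j}\mapsto\partial_{a_j}$) then shows that, for $X\in\Sym(M)$, the holomorphic vector field $Z_X:=X+\bar X$ on $\bbC^n\times\bar\bbC^n$ — which is tangent to $M^c$ — satisfies $Z_X(\iota(z))=d\iota\big((X+\bar X)|_z\big)$; in particular $Z_X(\iota(z))$ lies in the real form $d\iota(T_zM)$ precisely because $X+\bar X$ is tangent to $M$.

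Now I invoke Proposition \ref{P:sym}: the vector fields $Z_X$, $X\in\Sym(M)$, span $\Sym(M^c)$ over $\bbC$. Evaluating at a point $z\in M$ and using the elementary fact that real vectors span a real vector space over $\bbR$ iff they span its complexification over $\bbC$, I obtain the chain of equivalences: $\Sym(M)$ is transitive at $z$ $\iff$ $\{(X+\bar X)|_z\}_{X\in\Sym(M)}$ spans $T_zM$ over $\bbR$ $\iff$ $\{Z_X(\iota(z))\}_{X\in\Sym(M)}$ spans $T_{\iota(z)}M^c$ over $\bbC$ $\iff$ $\Sym(M^c)$ is transitive at $\iota(z)$. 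This is exactly the claimed equivalence at the base point, and ``$M^c$ homogeneous $\Rightarrow$ $M$ homogeneous'' follows verbatim at every point of $M$ since $\iota(M)\subseteq M^c$. For the literal converse — transitivity of $\Sym(M^c)$ at \emph{every} point of $M^c$, not just along $\iota(M)$ — one notes that the locus in $M^c$ where $\Sym(M^c)$ fails to span the tangent space is a proper complex-analytic subset (cut out by holomorphic minors of the generators), invariant under the local flows of $\Sym(M^c)$ and disjoint from the open saturation of $\iota(M)$, so a standard flow-invariance/analytic-continuation argument forces it to be empty.

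I do not expect a serious obstacle: the only points that need care are the identification of $T_{\iota(z)}M^c$ with the complexification of $d\iota(T_zM)$ and the compatibility $Z_X(\iota(z))=d\iota((X+\bar X)|_z)$, both routine once the Wirtinger bookkeeping is written out; everything else is formal linear algebra together with Proposition \ref{P:sym}.
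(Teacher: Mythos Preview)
The paper does not supply a proof for this corollary; it is stated immediately after Proposition~\ref{P:sym} and treated as an obvious consequence of the identification $\Sym(M^c)=\Sym(M)\otimes_{\bbR}\bbC$. Your argument is correct and makes explicit exactly the linear-algebra step that the paper leaves implicit: evaluating the real generators $X+\bar X$ at a diagonal point $\iota(z)$ and using that a set of real vectors spans a real space iff it spans the complexification over $\bbC$.

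One small remark: your final paragraph, extending transitivity from $\iota(M)$ to all of $M^c$ via a flow-invariance/analytic-continuation argument, is more than the paper asks for. The paper works locally throughout (germs of hypersurfaces near a basepoint), so the relevant statement is transitivity of $\Sym(M^c)$ in a neighbourhood of a chosen diagonal point $(z^0,\bar z^0)$; this follows immediately from openness of the spanning condition once you have it at $(z^0,\bar z^0)$. Your global argument is fine in spirit, but the clean version is simply that a maximal totally real submanifold is a uniqueness set for holomorphic functions, so the holomorphic minors cutting out the non-transitivity locus vanish identically if they vanish along $\iota(M)$---which they do not.
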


\subsection{Algebraic model of hypersurfaces with transitive symmetry algebra}

Let $M\subset \bbC^n$ be a Levi non-degenerate hypersurface with transitive symmetry algebra $\Sym(M)$. Consider its complexification $M^{c}$ and let $\fs = \Sym(M^{c})$. By above, $\fs$ is infinitesimally transitive on $M^{c}$. 

Let $z^0$ be an arbitrary point of $M\subset \bbC^n$. Then by definition the point $(z^0,\bar z^0) \subset \bbC^n\times \bar\bbC^n$ lies in $M^c$.  Let $\fk$ be a subalgebra of $\fs$ consisting of all vector fields that vanish at $(z^0,\bar z^0)$. Since $\fs$ is transitive, $\fk$ has codimension $2n-1$ in $\fs$. 

As above, let $E$ and $V$ be two completely integrable distributions on $M^c$ defining an ILC structure on it. Denote by $\fe$ and $\fv$ two subspaces in $\fs$ consisting of such vector fields $X$ that $X_{(z^0,\bar z^0)}\in E_{(z^0,\bar z^0)}$ and $X_{(z^0,\bar z^0)}\in V_{(z^0,\bar z^0)}$ respectively. Since $E$ and $V$ are completely integrable, it follows that both $\fe$ and $\fv$ are actually subalgebras in $\fs$. It is clear that $\fe\cap \fv = \fk$ and that $\fe+\fv$ is a subspace of codimension 1 in~$\fs$.

The fact that $E+V$ is a contact structure on $M^c$ can be translated to the algebraic language as follows. Consider the bilinear map:
\[
\fe/\fk \times \fv/\fk \to \fs/(\fe+\fv),\quad (X+\fk,Y+\fk)\mapsto [X,Y] + (\fe+\fv).
\]  
It is easy to see that it is well-defined and is non-degenerate.

We call the tuple $(\fs,\fk;\fe,\fv)$ \emph{an algebraic model of the ILC structure $(E,V)$ on $M^c$}. It uniquely determines the local ILC structure on $M^c$ in a neighbourhood of the point $(z^0,\bar z^0)$.

Consider now the involutive map:
\[
\bbC^n\times \bar\bbC^n \to \bbC^n\times \bar\bbC^n, \quad (z,a)\mapsto (\bar a, \bar z).
\]
By definition, it stabilizes $M^c$ and preserves $\fs = \Sym(M^c)$. Its restriction to $\fs$ defines an anti-involution $\varphi$ of $\fs$ that preserves $\fk$ and swaps $\fe$ and $\fk$. The tuple $(\fs,\fk;\fe,\fv)$ with the anti-involution $\varphi$ uniquely determines the local structure of $M$ itself in the neighbourhood of the point~$z^0$.

\section{Classification of real forms}
\label{S:RealForms}

 Let $(\fs,\fk;\fe,\fv)$ be the algebraic data associated to a locally homogeneous complex ILC structure.  This satisfies the following properties:
 \begin{itemize}
 \item $\fk \subset \fe, \fv \subset \fs$ are Lie subalgebra inclusions, and $\fe \cap \fv = \fk$;
 \item $\fe + \fv$ has codimension one in $\fs$, and $[\fe,\fv] \not\subset \fe + \fv$.
 \end{itemize}
 Recall that any real form of $\fs$ is the fixed point set $\fs^\varphi$ of an anti-involution $\varphi : \fs \to \fs$, i.e.\ a complex anti-linear map satisfying $\varphi^2 = \id$ and $\varphi([x,y]) = [\varphi(x),\varphi(y)]$ for any $x,y \in \fs$.  We say that $\varphi$ is {\em admissible} if: (i) it preserves $\fk$, and (ii) it swaps $\fe$ and $\fv$.  Any homogeneous CR structure is obtained from some admissible anti-involution $\varphi$ for a homogeneous complex ILC structure.  Indeed, from $(\fs^\varphi,(\fe + \fv)^\varphi,\fk^\varphi)$, the contact subspace $C$ corresponds to $(\fe + \fv)^\varphi \mod \fk^\varphi$, and we designate $E$ and $V$ to be the $+i$-eigenspace $C^{1,0}$ and $-i$-eigenspace $C^{0,1}$ under (the $\bbC$-linear extension of) $J$.  The Levi form $[\xi,\bar\eta]\,\mod C^\bbC$, for $\xi,\eta \in \Gamma(C^{0,1})$, can then be evaluated from the above Lie algebraic data.
  
 We say that two admissible anti-involutions $\varphi,\psi$ are {\em equivalent} if $\psi = T \circ \varphi \circ T^{-1}$, where $T$ is an {\em admissible} automorphism of $\fs$, i.e.\ (i) preserves $\fk$, and (ii) swaps $\fe,\fv$ or preserves both of them.  Equivalent admissible anti-involutions yield isomorphic homogeneous CR structures, so it suffices to identify representatives from each equivalence class.
 
 \begin{theorem} \label{T:AI} A complete list of representative admissible anti-involutions for all non-flat 5-dimensional multiply-transitive complex ILC structures is given in Table \ref{F:AI-reps}.
 \end{theorem}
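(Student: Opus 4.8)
The plan is to run a finite case analysis over the non-flat $5$-dimensional multiply-transitive complex ILC structures classified in \cite{DMT2014} --- precisely those of Petrov type III, D, or N --- using for each the explicit algebraic data $(\fs,\fk;\fe,\fv)$ recorded there.

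\emph{Reduction to types D and N.} First I would invoke Corollary \ref{C:CR-DN}: a non-flat multiply-transitive CR structure can only arise as a real form of an ILC structure of type D or N, so no type III model carries an admissible anti-involution. I would record this directly by checking, for each type III model, that every automorphism of $\fs$ fixing $\fk$ preserves each of $\fe$ and $\fv$ (the unordered pair cannot be interchanged); since this obstruction is an identity in the structure constants, it also excludes an anti-linear Lie isomorphism swapping $\fe$ and $\fv$. Only the type D and type N models then remain.

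\emph{Parametrizing the anti-involutions.} For a fixed type D or N model, $\fs$ is defined over $\bbR$ in the given basis, so coordinatewise complex conjugation $\tau$ is an anti-linear involution of $\fs$; after a harmless change of basis I may assume $\tau$ either preserves each of $\fe,\fv$ or swaps them. Then every anti-linear map of $\fs$ is uniquely $\varphi = A\circ\tau$ with $A$ complex-linear, $\varphi$ is a Lie algebra homomorphism iff $A\in\Aut(\fs)$, and $\varphi^2=\id$ iff $\tau A\tau = A^{-1}$; admissibility of $\varphi$ becomes a constraint on $A$ relative to $\tau$, and $A\tau$ is equivalent to $A'\tau$ iff $A'=T\,A\,(\tau T\tau)^{-1}$ for some admissible automorphism $T$ of $(\fs,\fk;\fe,\fv)$. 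So the problem reduces to computing $\tau$-twisted conjugacy classes in $\Aut(\fs,\fk;\fe,\fv)$ --- a $\bbZ/2$ Galois-cohomology computation in dimension $\le 8$ --- which I would carry out model by model.

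\emph{Carrying out each case.} For the models with reductive $\fs$ --- notably those with $\fs\cong\fso(4,\bbC)\cong\fsl(2,\bbC)\times\fsl(2,\bbC)$ --- I would start from the known classification of real forms of $\fs$ and intersect it with the admissibility constraints: $\Aut(\fs,\fk;\fe,\fv)$ is the explicit stabilizer of the relevant parabolic-type flag together with the two null directions $\fe/\fk$ and $\fv/\fk$, and one reads off which real forms preserve $\fk$ and swap $\{\fe,\fv\}$; here the diagonal real form $\fsl(2,\bbC)_{\bbR}$, coming from the anti-involution that interchanges the two simple ideals, is the one that realizes the swap. For the non-reductive models (solvable, or Levi-decomposable such as $\fsl(2,\bbR)\ltimes\fn$ with $\fn$ Heisenberg, or the $8$-dimensional symmetry algebra of the Winkelmann structure) I would instead solve directly in coordinates: write $\varphi$ as an unknown anti-linear matrix and impose bracket-preservation, $\varphi^2=\id$, $\varphi(\fk)=\fk$, $\varphi(\fe)=\fv$; the solution set is a low-dimensional variety whose quotient by the explicitly-computed admissible automorphism group is finite. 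Assembling all representatives produces Table \ref{F:AI-reps}. I expect the main obstacle to be this last step for the non-reductive models: one must determine $\Aut(\fs,\fk;\fe,\fv)$ \emph{including its discrete part} --- it is easy to overlook an outer automorphism swapping $\fe$ and $\fv$ --- and then verify that the listed anti-involutions are exhaustive and pairwise inequivalent, whereas for the reductive models the subtlety is pairing each abstract real form with the correct swap-versus-preserve behaviour on $\{\fe,\fv\}$.
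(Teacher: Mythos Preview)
Your approach is essentially the same as the paper's: both reduce the theorem to a finite case-by-case computation over the models of \cite{DMT2014}, setting up an unknown anti-linear map on each $(\fs,\fk;\fe,\fv)$, imposing bracket-preservation, $\varphi^2=\id$, $\varphi(\fk)=\fk$, $\varphi(\fe)=\fv$, and then normalizing by admissible automorphisms. Your $\varphi=A\circ\tau$ / twisted-conjugacy framing is a clean way to organize this, but the underlying work is identical.

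Two remarks. First, your opening move --- invoking Corollary~\ref{C:CR-DN} --- is circular as written: in the paper that corollary is a \emph{consequence} of the III computations inside the proof of Theorem~\ref{T:AI}, not an input to it. You do say you would verify it directly, but your proposed verification (``every automorphism fixing $\fk$ preserves each of $\fe,\fv$'') is not what actually does the work. The paper instead uses a simpler invariant: any $\varphi$ swapping $\fe$ and $\fv$ must swap their derived series, and for III.6-2 (as well as N.7-1 and D.6-4) one has $\dim\fe^{(1)}\neq\dim\fv^{(1)}$, so these are excluded immediately. For III.6-1 the derived series have equal dimensions and one must solve the $\sigma_{jk}=[\varphi(e_j),\varphi(e_k)]-\varphi([e_j,e_k])=0$ system directly to reach a contradiction. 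Your linear-automorphism criterion would also work there (since the structure constants are real, $\tau$ preserves $\fe,\fv$ separately), but you would still need to carry out an equivalent computation.

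Second, your proposal to treat the reductive cases via the known classification of real forms of $\fs$ is a reasonable alternative to the paper's uniform brute-force approach, but in practice the paper simply runs the same $\sigma_{jk}=0$ calculation in every case and reads off the answer; separating reductive from non-reductive does not materially shorten the argument here.
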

 
The proof of Theorem \ref{T:AI} is a straightforward, but tedious, computation.  We will outline the details for some examples. 

 All non-flat 5-dimensional multiply-transitive complex ILC structures were classified in \cite[Tables 4.2-4.4]{DMT2014}.  The structure equations for any model given there are written with respect to an adapted basis $\varpi_1,...,\varpi_5,...$, and we let $e_1,...,e_5,...$ denote the dual basis.  In this {\em Cartan basis}, $\fk = \tspan\{ e_6,... \}$, $\fe = \tspan\{ e_1, e_2 \} + \fk$, and $\fv = \tspan\{ e_3, e_4 \} + \fk$.
 
 Note that in swapping $\fe$ and $\fv$, any admissible anti-involution $\varphi$ must swap their respective derived series.  Thus, no admissible anti-involutions exist for:
 \begin{itemize}
 \item N.7-1: $\dim(\fe^{(1)}) = 2$ and $\dim(\fv^{(1)}) = 3$;
 \item D.6-4: $\dim(\fe^{(1)}) = 3$ and $\dim(\fv^{(1)}) = 2$;
 \item III.6-2: $\dim(\fe^{(1)}) = 2$ and $\dim(\fv^{(1)}) = 1$.
 \end{itemize}
 
 For the next three examples, we refer to the structure equations in Table \ref{F:streq}.  These have $\dim(\fs) = 6$, so $\fe = \tspan\{ e_1, e_2, e_6 \}$, $\fv = \tspan\{ e_3, e_4, e_6 \}$, $\fk = \tspan\{ e_6 \}$, and so $\varphi(e_6) = \lambda e_6$ with $|\lambda| = 1$.  Let
 \[
 \sigma_{jk} = [\varphi(e_j), \varphi(e_k)] - \varphi([e_j,e_k]).
 \]
 
 \begin{table}[h]
 \begin{framed}
 {\bf III.6-1}: 
  \[
 \begin{array}{c|cccccc}
  & e_1 & e_2 & e_3 & e_4 & e_5 & e_6\\ \hline
 e_1 & \cdot & \frac{5}{4} e_2 & \frac{3}{2} e_3 - e_5 & -e_1 + \frac{1}{2} e_4 - \frac{9}{8} e_6 & \frac{1}{2} e_2 - \frac{3}{16} e_3 + e_5 & \cdot\\
 e_2 && \cdot & \cdot & e_2 + \frac{3}{4} e_3 - e_5 & \cdot & 2 e_2\\
 e_3 &&& \cdot & 3 e_3 & \cdot & 2 e_3\\
 e_4 &&&& \cdot & -\frac{3}{4} e_3 - 2 e_5 & \cdot\\
 e_5 &&&&& \cdot & 2 e_5\\
 e_6 &&&&&&\cdot \\
 \end{array}
 \]
 {\bf D.6-1}: 
 \[
 \begin{array}{c|cccccc}
  & e_1 & e_2 & e_3 & e_4 & e_5 & e_6\\ \hline
 e_1 & \cdot & \cdot & -e_5 + \frac{1}{4} e_6 & -\sqrt{2} e_2 & 3 e_1 & -4 e_1\\
 e_2 && \cdot & \sqrt{2} e_4 & -e_5 - \frac{3}{4} e_6 & \frac{3}{2} e_2 & -2 e_2\\
 e_3 &&& \cdot & \cdot & -3 e_3 & 4 e_3\\
 e_4 &&&& \cdot & -\frac{3}{2} e_4 & 2 e_4\\
 e_5 &&&&& \cdot & \cdot\\
 e_6 &&&&&&\cdot \\
 \end{array}
 \]
 {\bf N.6-2}:
 \[
 \begin{array}{c|cccccc}
  & e_1 & e_2 & e_3 & e_4 & e_5 & e_6\\ \hline
 e_1 & \cdot & -2 a e_2 - e_6 & -a e_3 - e_5 & -e_6 & -e_3 - 2 a e_5 & -e_2 - a e_6\\
 e_2 && \cdot & \cdot & b e_2 - e_5 & \cdot & \cdot\\
 e_3 &&& \cdot & 2 b e_3 - e_6 & \cdot & \cdot\\
 e_4 &&&& \cdot & e_2 - 2b e_5 & e_3 - b e_6\\
 e_5 &&&&& \cdot & \cdot\\
 e_6 &&&&&&\cdot \\
 \end{array}
 \]
 \end{framed}
 \label{F:streq}
 \caption{Some ILC structures in the Cartan basis}
 \end{table}
 
 {\bf III.6-1}:  Since $\fe^{(1)} = \tspan\{ e_2 \}$ and $\fv^{(1)} = \tspan\{ e_3 \}$, then $\varphi(e_2) = s e_3$ and $\varphi(e_3) = \frac{1}{\bar{s}} e_2$.  The maximal abelian subalgebras of $\fe$ and $\fv$ containing $\fk$ must be swapped, so $\varphi(e_1) = \alpha e_4 + \beta e_6$ and $\varphi(e_4) = \gamma e_1 + \delta e_6$, with $\gamma \alpha \neq 0$.  Now $\sigma_{26} = 0$ forces $\lambda = 1$, and $\sigma_{14} = 0$ forces $\gamma = 2$ and $\alpha = \frac{1}{2}$, and $\beta = \frac{\delta}{2} - \frac{9}{4}$.  But $\sigma_{12} = \sigma_{34} = 0$ implies $\beta = -\frac{11}{8}$ and $\delta = \frac{11}{4}$, which contradicts the previous relation.  Thus, there are no CR structures associated with the type III models in \cite{DMT2014}.
 
 \begin{cor} \label{C:CR-DN}
 In dimension five, all non-flat multiply-transitive Levi-non-degenerate CR structures complexify to multiply-transitive complex ILC structures of type D or N.
 \end{cor}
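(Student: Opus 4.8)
The plan is to deduce the corollary from three facts already available: the complexification correspondence of \S\ref{S:Complexification}, the classification of multiply-transitive complex ILC structures in \cite{DMT2014}, and the analysis of admissible anti-involutions carried out above. First I would observe that if $M\subset\bbC^3$ is a multiply-transitive Levi non-degenerate hypersurface, then by Proposition \ref{P:sym} and its corollary the complexification $M^c$ is a complex ILC structure of complex dimension $2\cdot 3-1=5$ with $\dim_\bbC\Sym(M^c)=\dim_\bbR\Sym(M)\ge 6$; since $\Sym(M)$ and $\Sym(M^c)$ act transitively on spaces of (real, resp.\ complex) dimension five, the isotropy subalgebra of $M^c$ has the same dimension as that of $M$, so $M^c$ is multiply-transitive. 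Next, since the harmonic curvature $\kappa_H$ of $M^c$ is the complexification of the degree-four part of the Chern--Moser normal equation of $M$, the hypersurface $M$ is non-flat precisely when $M^c$ is non-flat. By \cite{DMT2014}, a non-flat multiply-transitive complex ILC structure in dimension five has Petrov type III, D, or N. Thus the corollary reduces to the single claim that \emph{no type III model admits an admissible anti-involution} -- equivalently, no such model is the complexification of a CR structure.

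To establish this claim I would run through the type III models of \cite{DMT2014} with symmetry dimension $\ge 6$ and dispose of each. Any admissible anti-involution $\varphi$ swaps $\fe$ and $\fv$, hence swaps their derived subalgebras; so a model with $\dim\fe^{(1)}\ne\dim\fv^{(1)}$ (for instance III.6-2, where these are $2$ and $1$) admits none. For the remaining type III models one argues directly, as in the III.6-1 computation above: the requirements that $\varphi$ preserve $\fk$, interchange $\fe$ and $\fv$, interchange their derived series, and interchange the maximal abelian subalgebras of $\fe$ and $\fv$ containing $\fk$ pin $\varphi$ down to a handful of scalar parameters; imposing the relations $\sigma_{jk}:=[\varphi(e_j),\varphi(e_k)]-\varphi([e_j,e_k])=0$ for a well-chosen collection of basis pairs then produces an inconsistent system of scalar equations (for III.6-1, the identities $\sigma_{26}=0$, $\sigma_{14}=0$, and $\sigma_{12}=\sigma_{34}=0$ already suffice). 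Combined with Theorem \ref{T:AI}, which exhibits admissible anti-involutions for the type D and N models, this yields the stated dichotomy.

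The main obstacle is the direct case analysis for the type III models not eliminated by the cheap derived-series obstruction. The difficulty there is organizational rather than conceptual: for each such model one must select a small set of structure constants whose $\sigma_{jk}$-vanishing over-determines the few free parameters of $\varphi$, so that the contradiction emerges after a short linear-algebra computation, with the III.6-1 case treated above serving as the template. A secondary point worth recording explicitly is that completeness here rests on the classification of \cite{DMT2014} being exhaustive among multiply-transitive complex ILC structures in dimension five.
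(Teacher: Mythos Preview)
Your proposal is correct and follows essentially the same approach as the paper. The only refinement worth noting is that, by the classification in \cite{DMT2014}, the \emph{only} multiply-transitive type~III models are III.6-1 and III.6-2; so once III.6-2 is eliminated by the derived-series obstruction and III.6-1 by the direct computation immediately preceding the corollary, the argument is already complete and no further ``template'' case analysis is required.
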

 
 {\bf D.6-1}: We must swap $\fe^{(1)} = \tspan\{ e_1, e_2 \}$ and $\fv^{(1)} = \tspan\{ e_3, e_4 \}$.  Now $\sigma_{16} = \sigma_{26} = 0$ implies that $\ad(e_6)|_{\fv^{(1)}} = \diag(\frac{4}{\lambda}, \frac{2}{\lambda})$ in the basis $\{ \varphi(e_1),\varphi(e_2)\}$.  But also $\ad(e_6)|_{\fv^{(1)}} = \diag(-4,-2)$ in the basis $\{ e_3, e_4 \}$.  Thus, $\lambda = -1$ and $\varphi(e_1) = s e_3$, $\varphi(e_2) = t e_4$.  Since $\varphi^2 = \id$, then $\varphi(e_3) = \frac{1}{\bar{s}} e_1$ and $\varphi(e_4) = \frac{1}{\bar{t}} e_2$.  Now $\sigma_{14} = 0$ leads to $s = |t|^2 \in \bbR \backslash \{ 0 \}$.  The admissible automorphism $(e_1,...,e_6) \mapsto (\frac{e_1}{c^2}, \frac{e_2}{c}, c^2 e_3, c e_4, e_5, e_6)$ induces $s \mapsto \frac{s}{|c|^4}$, so we may normalize $s = 1$ and $t = \epsilon = \pm 1$.  Finally, $\sigma_{13} = 0$ implies $\varphi(e_5) = -e_5$.  Thus, we have two real forms.
 The fixed point Lie algebra $\fs^\varphi$ has (real) basis
 \begin{align*}
 E_1 &= e_1 + e_3, \quad E_2 = i(e_1 - e_3), \quad 
 E_3 = e_2 + \epsilon e_4, \quad E_4 = i(e_2 - \epsilon e_4), \\ 
 E_5 &= i e_5, \quad E_6 = i e_6.
 \end{align*}
 The contact subspace $C$ is spanned by $E_1,..., E_4 \mod \fk$.  In this basis, $J = \begin{psmallmatrix} 
 0 & -1 & 0 & 0\\
 1 & 0 & 0 & 0\\
 0 & 0 & 0 & -1\\
 0 & 0 & 1 & 0
 \end{psmallmatrix}$, with $C^{0,1}$ associated to $\fv/\fk = \tspan\{ e_3, e_4 \} \mod \fk$.  Since $\varphi$ sends $(e_3,e_4) \mapsto (e_1,\epsilon e_2)$, then the Levi form is represented by
 $\begin{psmallmatrix}
 [e_3,e_1] & [e_3, \epsilon e_2]\\
 [e_4,e_1] & [e_4, \epsilon e_2]
 \end{psmallmatrix} \equiv 
 \begin{psmallmatrix}
 1 & 0\\
 0 & \epsilon
 \end{psmallmatrix} e_5 \mod (\fe + \fv)$.  This is definite if and only if $\epsilon = 1$.\\

 {\bf N.6-2}: This family is parametrized by $(a,b) \in \bbC^2$, with the redundancy that $(a,b)$, $(-a,b)$, $(a,-b)$, $(-a,-b)$ all yield equivalent models.\footnote{The map $(e_1,...,e_6) \mapsto (\epsilon_1 e_1, \epsilon_2 e_2, \epsilon_1 e_3, \epsilon_2 e_4, e_5, \epsilon_1 \epsilon_2 e_6)$, where $\epsilon_1 = \pm 1$ and $\epsilon_2 = \pm 1$, induces the parameter change $(a,b) \mapsto (\epsilon_1 a, \epsilon_2 b)$ and $\epsilon \mapsto \epsilon_1 \epsilon_2 \epsilon$.  This is an automorphism only when $a=b=0$.}  Thus, we can consider $(a^2,b^2) \in \bbC^2$ as the essential parameters.  Each of $\fe$ and $\fv$ contains a unique maximal abelian subalgebra containing $\fk$, namely $\tspan\{ e_2, e_6 \}$ and $\tspan\{ e_3, e_6 \}$ respectively.  These must be swapped by $\varphi$.  For $s_1 t_1 \neq 0$,
 \begin{align*}
 \varphi(e_1) = s_1 e_4 + s_2 e_3 + s_3 e_6, \quad \varphi(e_2) = t_1 e_3 + t_2 e_6.
 \end{align*}
 Then $\sigma_{16} = 0$ yields \framebox{$t_1 = -s_1 \lambda$} and $t_2 + bt_1 + \bar{a} \lambda = 0$.  From $\sigma_{12} = 0$,
 \begin{itemize} 
 \item $0 = s_1 t_2 - 2s_1 t_1 b + 2\bar{a} t_1 = s_1 t_2 - 2s_1 (-t_2 - \bar{a}\lambda) + 2\bar{a} t_1 = 3 s_1 t_2$, so \framebox{$t_2 = 0$}.  It follows that $\bar{a} = bs_1$.
 \item $0 = s_1 t_1 + \lambda = \lambda(1 -(s_1)^2)$, so \framebox{$s_1 = \epsilon = \pm 1$}, hence \framebox{$\bar{a} = \epsilon b$}.
 \end{itemize}
 
 We have $e_2 = \varphi^2(e_2) = \varphi(t_1 e_3) = -\epsilon \bar\lambda \varphi(e_3)$.  Similarly, $e_1 = \varphi^2(e_1)$ yields $\varphi(e_4) = \epsilon e_1 + \overline{(s_2/\lambda)}  e_2 - \epsilon \overline{s_3} \lambda e_6$.  Now we obtain
 \begin{align*}
 0 &= \sigma_{14} = [\varphi(e_1),\varphi(e_4)] - \varphi([e_1,e_4])\\
 &= \epsilon (s_3 - b \overline{(s_2/\lambda)}) e_2 + \epsilon(- \epsilon \overline{s_3} \lambda + s_2 a) e_3 + \epsilon (s_2 + \overline{(s_2/\lambda)}) e_5 + (\epsilon s_3 a + \overline{s_3} \lambda b + 1 + \lambda) e_6
 \end{align*}
 The coefficients of $e_2,e_5$ imply \framebox{$s_3 = -s_2 b$}.  The rest reduce to $\bar\lambda s_2 + \bar{c}_2 = 0 = (1+\lambda)(1 - s_2|b|^2)$.  We must have \framebox{$\lambda = -1$}.  (Note $s_2 = \frac{1}{|b|^2}$ also forces $\lambda = -1$.)  Thus, \framebox{$s_2 \in \bbR$} and \framebox{$\varphi(e_6) = -e_6$}.
 
 For any $r$, the linear map $T$ fixing $e_2,e_3,e_5,e_6$ and sending
 \[
 e_1 \mapsto e_1 - r e_2 - ar e_6, \quad e_4 \mapsto e_4 + re_3 - rbe_6
 \]
 is an automorphism of $\fs$ that preserves each of $\fk, \fe, \fv$.  In the new basis $\tilde{e}_1 = T(e_1)$, ..., $\tilde{e}_6 = T(e_6)$, we have $\varphi(\tilde{e}_1) =  \epsilon \tilde{e}_4 + \tilde{c}_2 \tilde{e}_3 + \tilde{c}_3 \tilde{e}_6$, where $\tilde{c}_3 = -\tilde{c}_2 b$ and (using $t_1 = -s_1\lambda = \epsilon$) we have $\tilde{c}_2 = s_2 - r \epsilon - \bar{r} t_1 = s_2 - (r + \bar{r}) \epsilon$.
 Since $s_2 \in \bbR$, we may normalize \framebox{$s_2 = 0$} (and hence \framebox{$s_3 = 0$}).  Hence, $\varphi$ maps $(e_1,e_2,e_3,e_4) \mapsto (\epsilon e_4, \epsilon e_3, \epsilon e_2, \epsilon e_1)$, and
 \begin{align*}
 0 &= \sigma_{13} = [\varphi(e_1),\varphi(e_3)] - \varphi([e_1,e_3]) = [e_4,e_2] + \varphi(ae_3 + e_5) \\
 &= e_5 - be_2 + \epsilon \bar{a} e_2 + \varphi(e_5) = e_5 + \varphi(e_5) \qRa \framebox{$\varphi(e_5) = -e_5$}.
 \end{align*}
 Since $\varphi$ sends $(e_3,e_4) \mapsto (\epsilon e_2, \epsilon e_1)$, then 
 $\begin{psmallmatrix}
 [e_3, \epsilon e_2] & [e_3, \epsilon e_1]\\ 
 [e_4, \epsilon e_2] & [e_4, \epsilon e_1]
 \end{psmallmatrix} \equiv
 \begin{psmallmatrix} 
 0 & \epsilon \\ 
 \epsilon & 0 
 \end{psmallmatrix} e_5$ implies an indefinite Levi-form.
 
 We obtained a unique representative admissible anti-involution:
 \begin{itemize}
 \item $ab \neq 0$: Since $\bar{a} = \epsilon b$, then $\epsilon$ is uniquely determined.
 \item $a=b=0$: Rescaling $(e_2,e_4,e_6)$ by $\epsilon$ normalizes $\epsilon = 1$.
 \end{itemize} 
 Using the aforementioned parameter redundancy, we normalize $\epsilon = 1$ (so that $b = \bar{a}$).  Thus, $b^2 = \bar{a}^2 \in \bbC$ are the parameters yielding CR structures, and in each case there is a {\em unique} structure.

 \begin{remark} \label{R:N62} For N.6-2, the duality swap induces $(a,b) \mapsto (b,a)$ (see \cite[Table A.6]{DMT2014}), so the structure is self-dual if and only if $b^2=a^2$.  For the cases admitting CR structures, $b^2 = \bar{a}^2$, so these are self-dual precisely when $b^2 = a^2 \in \bbR$.  As shown in \S \ref{S:tubular}, these coincide with the cases that admit tubular representations -- see Table \ref{F:affineN} for the tubular models.
 \end{remark}
 
All admissible anti-involutions can be computed in the same way.  The final  list is presented in Table~\ref{F:AI-reps}.  The local models for all these anti-involutions are constructed in the following sections.

\section{Homogeneous tubular hypersurfaces}
\label{S:tubular}

 A natural class of CR structures are {\em tubular hypersurfaces}, which arise from analytic hypersurfaces in $\bbR^n$ (i.e.\ their ``base'').  In $\bbC^3$, the majority of the hypersurfaces in our classification are indeed tubular (Theorem \ref{s4:t1}).  A complete classification of {\em affine-homogeneous} surfaces in $\bbR^3$ was obtained by Doubrov--Komrakov--Rabinovich \cite[Theorem 1]{DKR1995}, so using their list is a natural starting point for our study.  However, not all (CR-)homogeneous tubular hypersurfaces in $\bbC^3$ have affine-homogeneous base, so it is important to be able to abstractly identify tubular CR structures and determine the affine symmetry dimension for their base hypersurfaces.

Consider an analytic hypersurface in $\bbR^n$:
\begin{equation}
\label{s4:aff_eq}
f(x_1,\dots,x_n) = 0.
\end{equation} 
 A \emph{tubular hypersurface} $M$ in $\bbC^n$ induced by \eqref{s4:aff_eq} is defined by the equation
\begin{equation} 
\label{s4:tube}
 f(\Re(z_1),\dots,\Re(z_{n})) = 0.
\end{equation}
Obviously, this hypersurface admits the symmetries $i\partial_{z_1}, ..., i\partial_{z_{n}}$. Now \eqref{s4:tube} can be rewritten as
\begin{equation}\label{s4:real_eq} f\left(\frac{z_1+\bar z_1}2,\dots,\frac{z_{n}+\bar z_{n}}2\right)=0.
\end{equation}
From \S \ref{S:Complexification}, the complexification $M^c$ of \eqref{s4:real_eq} is the following complex submanifold of $\bbC^{n}\times\bar\bbC^{n}$:
\begin{equation}\label{s4:surf_f}
 f\left(\frac{z_1+a_1}2,\dots,\frac{z_{n}+a_{n}}2\right)=0. 
\end{equation}
\begin{defn}
We call a complex ILC structure given by \eqref{s4:surf_f} a \emph{tubular ILC structure}.
\end{defn}
Equation \eqref{s4:surf_f} can be seen as a (translation-invariant) family of hypersurfaces in $\bbC^n$ parametrised by $a = (a_1,...,a_n)$. 
The real hypersurface \eqref{s4:real_eq} is the fixed point set of the anti-involution
\begin{equation}\label{s4:anti-inv}
\tau\colon \bbC^{n}\times\bar\bbC^n\to \bbC^{n}\times\bar\bbC^n,\quad\tau(z,a)=(\bar a, \bar z). 
\end{equation}

If the real hypersurface \eqref{s4:aff_eq} admits affine symmetries, then these symmetries 
can be extended to the complex-affine symmetries of \eqref{s4:tube} in $\bbC^n$. More precisely, if $\phi\colon\bbR^n\to\bbR^n$ is the affine symmetry 
\[ \phi(x)=Ax+B,\quad A\in\GL(n,\bbR),\,\, B\in\bbR^n, \]
 then for $z = x+iy$, the transformation $z \mapsto Az+B$ is the symmetry of the corresponding real hypersurface in $\bbC^n$. The complex-affine symmetries form a subalgebra of the CR symmetry algebra.

 Recall \cite{DMT2014} that given an ILC structure with integrable subbundles $E$ and $V$, the dual ILC structure is obtained by swapping $E$ and $V$.

\begin{prop}\label{s4:prop_SD}
A tubular ILC structure is self-dual.
\end{prop}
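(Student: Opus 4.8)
The plan is to produce an explicit biholomorphism of the tubular model onto its dual. By definition the dual ILC structure is the manifold $M^c$ with the roles of $E$ and $V$ interchanged, so it suffices to exhibit a biholomorphism $\iota\colon M^c\to M^c$ with $\iota_*E=V$ and $\iota_*V=E$; such an $\iota$ then automatically preserves $C=E\oplus V$ together with the associated conformal symplectic and Lagrangian data, hence is an ILC isomorphism onto the dual.

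The key point is that the defining equation \eqref{s4:surf_f},
\[
 f\!\left(\tfrac{z_1+a_1}{2},\dots,\tfrac{z_n+a_n}{2}\right)=0,
\]
is manifestly symmetric under the interchange $z_j\leftrightarrow a_j$, because $\tfrac{z_j+a_j}{2}=\tfrac{a_j+z_j}{2}$. I would realize this interchange as a holomorphic involution $\iota$ of the ambient $\bbC^n\times\bar\bbC^n$, obtained by composing the transposition $\bbC^n\times\bar\bbC^n\to\bar\bbC^n\times\bbC^n$ with the conjugation biholomorphisms $\bar\bbC^n\to\bbC^n$ and $\bbC^n\to\bar\bbC^n$. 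In the holomorphic coordinates $(z,a)$ this composite reads simply $\iota(z,a)=(a,z)$; it is a biholomorphic involution, and by the symmetry just noted it carries $M^c$ to itself, hence restricts to a biholomorphic involution of $M^c$.

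It then remains to check that $\iota$ swaps the two foliations. The leaves of $V$ are the fibres of $\pi\colon M^c\to\bbC^n$, i.e.\ the loci where the $z$-coordinates are constant, while the leaves of $E$ are the fibres of $\bar\pi\colon M^c\to\bar\bbC^n$, the loci where the $a$-coordinates are constant. Since $\iota$ exchanges the $z$- and $a$-coordinates, it maps $V$-leaves to $E$-leaves and conversely, so $\iota_*E=V$ and $\iota_*V=E$, as required; this finishes the proof.

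I expect the only point requiring genuine care to be the bookkeeping between $\bbC^n$ and $\bar\bbC^n$: one must confirm that the ``interchange $z$ and $a$'' map may be taken \emph{holomorphic}, so that it yields an actual isomorphism onto the dual and not merely an anti-isomorphism --- the latter weaker statement already being realized by the anti-involution $\tau$ of \eqref{s4:anti-inv}, which also swaps $E$ and $V$ but is antiholomorphic. Unwinding the definition of $\bar\bbC^n$ shows that the displayed composite is holomorphic, so this is not a real obstruction. (Alternatively, one may observe that the dual of $M^c$ is the complexification of the conjugate CR hypersurface $\overline M$, and that for a tubular $M$ the reflection $z\mapsto\bar z$ is an anti-CR diffeomorphism of $M$, whence $M\cong\overline M$ and $M^c$ is self-dual.)
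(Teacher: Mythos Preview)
Your argument is correct and is essentially the paper's own proof: the paper simply writes down the holomorphic involution $\sigma(z,a)=(a,z)$, observes it preserves \eqref{s4:surf_f}, and notes that since it swaps the $z$- and $a$-variables it is a duality transformation. Your version is more detailed and, in particular, takes care over the point the paper leaves implicit---namely that the swap may be read as a \emph{holomorphic} map (so an ILC isomorphism to the dual) rather than merely the antiholomorphic $\tau$ of \eqref{s4:anti-inv}.
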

\begin{proof}
The involution $\sigma(z,a)=(a, z)$ preserves \eqref{s4:surf_f} and swaps variables $z_j$ with parameters $a_j$. This means that $\sigma$ is a duality transformation for the ILC structure $M^c$.
\end{proof}

It is well known that a hypersurface with non-degenerate second fundamental form induces a hypersurface in $\bbC^n$ with non-degenerate Levi form of the same signature. To see this, consider an analytic hypersurface in $\bbR^n$. Using affine transformations, we can assume it is of the form:
\[ 
u=g(x_1,\dots,x_{n-1})=\epsilon_1x_1^2+\dots+\epsilon_{n-1} x_{n-1}^2 + O(|x|^3), \qquad \epsilon_j = \pm 1,
\]
where $u = x_n$. The corresponding tubular hypersurface in $\bbC^n$ is:
\[
\frac{w+\bar w}2 =g\left(\frac{z_1+\bar z_1}2,\dots,\frac{z_{n-1}+\bar z_{n-1}}2\right)=\epsilon_1\left(\frac{z_1+\bar z_1}2\right)^2+\dots+\epsilon_{n-1} \left(\frac{z_{n-1}+\bar z_{n-1}}2\right)^2  + O(|z|^3).
\]
The holomorphic coordinate change $w \mapsto w + \frac{1}{2} (\epsilon_1 z_1^2 + \dots + \epsilon_{n-1} z_{n-1}^2)$ transforms it to:
\[ 
\Re(w)=\epsilon_1 |z_1|^2 + \dots + \epsilon_{n-1} |z_{n-1}|^2 + O(|z|^3).
\]

Henceforth assuming non-degeneracy, take $M \subset \bbR^n$ of the form $u = g(x_1,...,x_{n-1})$ and with $g$ having nonzero Hessian.  Letting $w = z_n$, we see that $M^c$ is of the form
\begin{equation*}
\frac{w+c}2= g\left(\frac{z_1+a_1}2,\dots,\frac{z_{n-1}+a_{n-1}}2\right). 
\end{equation*}
Differentiate this twice with respect to $z_j$ to obtain
 \[ 
w_j = g_j\left(\frac{z_1+a_1}2,\dots,\frac{z_{n-1}+a_{n-1}}2\right), \quad w_{jk}=\frac{1}{2} g_{jk}\left(\frac{z_1+a_1}2,\dots,\frac{z_{n-1}+a_{n-1}}2\right).
 \]
 Since $\operatorname{Hess}(g)\neq 0$, the first set of equations can be locally solved for $z_1+a_1,...,z_{n-1} + a_{n-1}$.  Substitution into the second set of equations yields the 2nd order PDE system
\begin{equation}\label{s4:tub_ilc} w_{jk}=G_{jk}\left(w_1,\dots,w_{n-1}\right).
\end{equation}
 The translation group acts locally transitively on the space of solutions. Since hypersurfaces with non-degenerate 2nd fundamental form cannot admin one-parameter groups of translations, the infinitesimal stabilizer should be trivial at each point in the solution space (a hypersurface in $\bbC^n$).

Recall from Section \ref{S:Complexification} that a complex ILC on $M^c$ can be regarded as a double fibration over the base manifold $M^c/V=\bbC^n$ and the solution space $M^c/E=\bar\bbC^n$.
\begin{lemma}\label{L:proj}
Any non-zero symmetry of the complex ILC structure on the manifold $M^c$ has non-zero projections on $\bbC^n$ and $\bar\bbC^n$.
\end{lemma}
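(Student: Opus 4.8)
The plan is to exploit the local identification $M^c \cong J^1 = J^1(\bbC^n, n-1)$ established in Section~\ref{S:Complexification}: holomorphic coordinates $(z_1,\dots,z_{n-1},w,p_1,\dots,p_{n-1})$, contact form $\theta = dw - \sum_j p_j\,dz_j$, with $V = \tspan\{\partial_{p_j}\}$ tangent to the fibres of $\pi\colon (z,w,p)\mapsto(z,w)$, and $E = \tspan\{\cD_j\}$, $\cD_j = \partial_{z_j} + p_j\partial_w + \sum_k f_{jk}\partial_{p_k}$, tangent to the fibres of $\bar\pi$ (whose leaves are the prolonged solutions of $\cE(M)$). The one observation that drives everything is that a symmetry $\xi$ preserves $C = \ker\theta$, hence $\mathcal{L}_\xi\theta = \mu\theta$ for some holomorphic function $\mu$; I will show that the vanishing of either projection is incompatible with this unless $\xi \equiv 0$.

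First I would treat the case $\pi_*\xi = 0$. Then $\xi$ is $\pi$-vertical, so $\xi = \sum_j h_j\partial_{p_j} \in \Gamma(V)$. Since $\iota_\xi\theta = 0$, Cartan's formula gives $\mathcal{L}_\xi\theta = \iota_\xi d\theta = -\sum_j h_j\,dz_j$; comparing with $\mu\theta = \mu\,dw - \mu\sum_j p_j\,dz_j$, the $dw$-term forces $\mu = 0$, and then the $dz_j$-terms force all $h_j = 0$, i.e.\ $\xi = 0$.

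For the case $\bar\pi_*\xi = 0$, I would note that $\xi$ is $\bar\pi$-vertical, i.e.\ $\xi\in\Gamma(E)$, so $\xi = \sum_j g_j\cD_j$ for holomorphic functions $g_j$. Expanding in coordinates one again has $\iota_\xi\theta = 0$ and $\mathcal{L}_\xi\theta = \iota_\xi d\theta = \sum_j g_j\,dp_j - \sum_j\big(\sum_m g_m f_{mj}\big)\,dz_j$; matching against $\mu\theta$ forces $\mu = 0$ from the $dw$-term and then all $g_j = 0$ from the $dp_j$-terms, hence $\xi = 0$. Alternatively, and perhaps more elegantly, this second case reduces to the first via the duality involution $\sigma(z,a) = (a,z)$ of Proposition~\ref{s4:prop_SD}: it preserves $M^c$ and satisfies $\pi\circ\sigma = \bar\pi$, so $\sigma_*\xi$ is again a symmetry with $\pi_*(\sigma_*\xi) = \bar\pi_*\xi = 0$, forcing $\sigma_*\xi = 0$ and thus $\xi = 0$.

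I do not anticipate a genuine obstacle here; the point worth emphasizing is that one uses \emph{none} of the integrability conditions of $\cE(M)$ nor the non-degeneracy of the base's second fundamental form — merely the conformal invariance of the contact form under a symmetry. The only thing requiring care is bookkeeping the $dw$-, $dz_j$-, and $dp_j$-coefficients of $\mathcal{L}_\xi\theta$ in each case, together with the routine check that the stated coordinate identification $M^c \cong J^1$ carries $(E,V)$ and the two projections to the objects above.
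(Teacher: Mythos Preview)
Your argument is correct and in fact establishes something slightly stronger than the paper does: you only use that $\xi$ is a \emph{contact} symmetry (i.e.\ $\mathcal{L}_\xi\theta = \mu\theta$), whereas the paper's proof uses the full ILC symmetry condition. The paper argues as follows: if $X$ projects trivially on $\bar\bbC^n$ then $X\in\Gamma(E)$; by non-degeneracy of the bracket pairing $\fe/\fk \times \fv/\fk \to \fs/(\fe+\fv)$ (equivalently, of the Levi form), at any point where $X_p\neq 0$ there is $Y\in\Gamma(V)$ with $[X,Y]_p\notin E_p\oplus V_p$; but an ILC symmetry must preserve $V$, forcing $[X,Y]\in\Gamma(V)$, a contradiction. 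So the paper's proof is a two-line conceptual argument using preservation of $V$ and Levi non-degeneracy, while yours is a direct coordinate computation on $J^1$ using only preservation of the contact line bundle. Both are perfectly valid; yours has the virtue of making explicit that the result is really the standard fact that a contact vector field tangent to the contact distribution must vanish.

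One small correction: your alternative route for the $\bar\pi$-vertical case via the duality involution $\sigma(z,a)=(a,z)$ of Proposition~\ref{s4:prop_SD} is not available here in general. That proposition concerns \emph{tubular} ILC structures specifically; for an arbitrary Levi non-degenerate $M^c$ there is no canonical holomorphic involution swapping the two projections. Your direct computation already handles both cases cleanly, so simply drop that remark.
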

\begin{proof}
Without loss of generality assume that $X$ is a symmetry of the ILC structure on $M^c$ which projects  trivially on $\bar\bbC^n$. This implies $X\in \Gamma(E)$. From the definition of ILC structure it follows that for every point $p\in M^c$ there exists $Y\in \Gamma(V)$, such that $[X,Y]_p\not \in E_p\oplus V_p$. But then the field $X$ does not preserve $V$.
\end{proof}

Every symmetry of PDE \eqref{s4:tub_ilc} induces an action on the solution space. Therefore for every symmetry $X\in\fX( \bbC^n)$ of \eqref{s4:tub_ilc}, Lemma \ref{L:proj} gives a unique $Y\in\fX(\bar\bbC^n)$ such that $X+Y$ is tangent to \eqref{s4:surf_f}. We call $X+Y$ \emph{the prolongation of the symmetry $X$ to the solution space}.

\begin{example}\label{Ex:D7} Consider the following affine surface in $\bbR^3$:
 \begin{align} \label{E:homAffex}
 u = \alpha\ln(x) + \ln(y), \quad \alpha \in \bbR \setminus \{ 0, -1 \}.
 \end{align}
It is affinely homogeneous~\cite{DKR1995} and gives rise to the tubular hypersurface $M \subset \bbC^3$ given by
 \begin{equation}\label{s4:CRex}
\Re(w) = \alpha\ln(\Re(z_1)) + \ln(\Re(z_2)).
 \end{equation}
 Complexifying \eqref{s4:CRex}, we get a 3-parameter family of surfaces in $\bbC^3$, parametrized by $(a_1,a_2,b) \in \bbC^3$:
 \begin{align} \label{s4:D7-soln}
 w = 2\alpha\ln\left(\frac{z_1+a_1}2\right) + 2 \ln\left(\frac{z_2+a_2}2\right) - b.
 \end{align}
 Differentiating \eqref{s4:D7-soln} twice yields
 \[
 w_1 = \frac{2\alpha}{z_1+a_1}, \quad
 w_2 = \frac{2}{z_2+a_2}, \quad w_{11} = -\frac{2\alpha}{(z_1+a_1)^2}, \quad w_{12} = 0, \quad w_{22} = -\frac{2}{(z_2+a_2)^2}.
 \]
 We eliminate $(a_1,a_2,b)$ in $w_{11},w_{12},w_{22}$ using the equations for $w,w_1,w_2$, and obtain
 \begin{align} \label{s4:D7}
 w_{11} = -\frac{(w_1)^2}{2\alpha}, \quad w_{12} = 0, \quad w_{22} = -\frac{(w_2)^2}{2}.
 \end{align}
 
Using \cite[(3.3)]{DMT2014}, this complex ILC structure has type D harmonic curvature.  More precisely, from \cite[Table 1.1]{DMT2014}, it is point-equivalent to the D.7 model $w_{11} = (w_1)^2, w_{12} = 0, w_{22} = \lambda (w_2)^2$ with $\lambda=\alpha$.  An abstract description for D.7 \cite[Table 4.3]{DMT2014} is given in terms of a parameter $a$, and \cite[Table A.4]{DMT2014} gives $\lambda = \frac{3+4a}{3-4a}$, hence $a = \frac{3}{4} (\frac{\alpha-1}{\alpha+1}) \in \bbR \backslash \{ \pm \frac{3}{4} \}$.  The redundancy $a \mapsto -a$ induces $\alpha \mapsto \frac{1}{\alpha}$.

 The point symmetries of \eqref{s4:D7} can be easily computed, for example in {\tt Maple} via:
 \begin{verbatim}
 with(DifferentialGeometry): with(GroupActions):
 DGsetup([z1,z2,w,w1,w2],M):
 F:=-w1^2/2/alpha: G:=0: H:=-w2^2/2:
 E:=evalDG([D_z1+w1*D_w+F*D_w1+G*D_w2,D_z2+w2*D_w+G*D_w1+H*D_w2]):
 V:=evalDG([D_w1,D_w2]):
 InfinitesimalSymmetriesOfGeometricObjectFields([E,V],output="list");
 \end{verbatim}
 This yields holomorphic vector fields on the jet space $J^1(\bbC^3,2)$ that are projectable over $\bbC^3$.  On the latter space, these are given by
 \begin{equation}\label{s4:ilc_sym}
 \partial_{z_1}, \quad \partial_{z_2}, \quad \partial_w, \quad z_1\partial_{z_1}, \quad z_2\partial_{z_2}, \quad 
 (z_1)^2 \partial_{z_1} + 2\alpha z_1\partial_w, \quad (z_2)^2 \partial_{z_2} + 2 z_2 \partial_w.
 \end{equation}
  
Consider the vector fields
 \begin{equation}\label{s4:ilc_dual_sym}
 \partial_{a_1}, \quad \partial_{a_2}, \quad \partial_b, \quad {a_1}\partial_{a_1}, \quad b\partial_{a_2}, \quad 
  (a_1)^2 \partial_{a_1} + 2\alpha{a_1}\partial_b, \quad b^2 \partial_{a_2} + 2 b \partial_b,
 \end{equation}
which are obtained by replacing $z_1$, $z_2$, $w$ in \eqref{s4:ilc_sym} with $a_1$, $a_2$, $b$. The vector fields \eqref{s4:ilc_dual_sym} are projections of ILC symmetries on $(a_1,a_2,b)$-space due to self-duality of tubular ILC structures. By Lemma \ref{L:proj}, for every vector field $X$ in the linear span of \eqref{s4:ilc_sym}, there exists a unique vector field $Y$ in the linear span of \eqref{s4:ilc_dual_sym} such that $X+Y$ is tangent to \eqref{s4:D7-soln}, i.e. $X+Y$ is the prolongation of $X$ to the solution space. 
Here, the prolonged symmetry algebra is spanned by:
 \begin{align*}
 & \partial_{z_1} - \partial_{a_1}, \quad \partial_{z_2} - \partial_{a_2}, \quad \partial_w - \partial_b, \quad
  x\partial_{z_1} +\alpha\partial_w + a_1\partial_{a_1} + \alpha \partial_b, \quad a_2\partial_{z_2}+   \partial_w + a_2\partial_{a_2} + \partial_b,\\
 & \label{s4:sym3} (z_1)^2 \partial_{z_1} + 2 \alpha z_1\partial_w - (a_1)^2 \partial_{a_1} - 2 \alpha a_1 \partial_b, \quad
 (z_2)^2 \partial_{z_2} + 2 z_2\partial_w - (a_2)^2 \partial_{a_2} - 2 a_2 \partial_b.
\end{align*}
 The $\tau$-stable subspace (see \eqref{s4:anti-inv} for $\tau$) immediately gives the CR symmetry of \eqref{s4:CRex}:
\begin{align*}
& i\partial_{z_1}, \quad i\partial_{z_2}, \quad i\partial_w, \quad
z_1\partial_{z_1} + \alpha\partial_w, \quad z_2\partial_{z_2} +  \partial_w, \\
& i(z_1)^2 \partial_{z_1} + 2i \alpha z_1 \partial_w, \quad
i(z_2)^2 \partial_{z_2} + 2 iz_2 \partial_w,
\end{align*}
 and this is isomorphic to $\fsl(2,\bbR) \times \fsl(2,\bbR) \times \bbR$.
(We use the common convention of suppressing the explicit action on $\bar z_j$.)  Note that $z_1\partial_{z_1} + \alpha\partial_w$ and $z_2\partial_{z_2} + \partial_w$ are affine symmetries of \eqref{s4:CRex}.

We already know that the complex ILC structure corresponding to this model is D.7 with $a = \frac{3}{4} (\frac{\alpha-1}{\alpha+1}) \in \bbR \backslash \{ \pm \frac{3}{4} \}$.  (Recall that the essential parameter is $a^2$ here.)  To complete the abstract classification of these models, we must determine the anti-involution in Table \ref{F:AI-reps}.  First note that \eqref{E:homAffex} has Hessian matrix $\begin{psmallmatrix} u_{xx} & u_{xy} \\ u_{xy} & u_{yy} \end{psmallmatrix} = \diag( -\frac{\alpha}{x^2}, -\frac{1}{y^2})$, so the Levi-form of \eqref{s4:CRex} has definite signature iff $\alpha > 0$ iff $|a| < \frac{3}{4}$.  From the abstract D.7 structure equations in \cite[Table 4.3]{DMT2014}, we can identify the real form $\fs^\varphi$ of $\fs = \fsl(2,\bbC) \times \fsl(2,\bbC) \times \bbC$ for each anti-involution $\varphi$ in Table \ref{F:AI-reps} by examining the signature of the Killing form for the semisimple part of $\fs^\varphi$.  Furthermore, the parameter redundancy $a \mapsto -a$ induces $\varphi_1^{(\epsilon_1,\epsilon_2)} \mapsto \varphi_1^{(\epsilon_2,\epsilon_1)}$, so we obtain:
\[
\begin{array}{|c|c|c|c|c} \hline
\mbox{Anti-involution} & |a| < \frac{3}{4} & a > \frac{3}{4} & \mbox{Levi-form signature}\\ \hline\hline
 \varphi_1^{(1,1)} & \fsu(2) \times \fsu(2) \times \bbR & \fsl(2,\bbR) \times \fsu(2) \times \bbR  & \mbox{definite}\\
 \varphi_1^{(1,-1)} & \fsl(2,\bbR) \times \fsu(2) \times \bbR & \fsl(2,\bbR) \times \fsl(2,\bbR) \times \bbR & \mbox{indefinite} \\
 \varphi_1^{(-1,1)} & \fsl(2,\bbR) \times \fsu(2) \times \bbR & \fsu(2) \times \fsu(2) \times \bbR & \mbox{indefinite}\\
 \varphi_1^{(-1,-1)} & \fsl(2,\bbR) \times \fsl(2,\bbR) \times \bbR & \fsl(2,\bbR) \times \fsu(2) \times \bbR & \mbox{definite}\\ \hline
\end{array}
\]
 Putting all the above facts together, we get the classification in the first line of Table \ref{F:affineD}.
 \end{example}

 \begin{example} For $\alpha \in \bbR \setminus \{ 0, -1 \}$, $u = \alpha \ln(e^{2x}+1) + \ln(y)$ and $u = \alpha \ln(e^{2x}+1) + \ln(e^{2y}+1)$ in the 2nd and 3rd lines of Table \ref{F:affineD} are both affinely {\em inhomogeneous}, and the corresponding tubular CR structures are definite iff $\alpha < 0$ in the former case, while $\alpha > 0$ in the latter case.  These have respective tubular ILC structures:
 \[
 \begin{cases}
 w_{11} = -\frac{w_1(w_1-2\alpha)}{2\alpha},\\
 w_{12} = 0,\\
 w_{22} = -\frac{(w_2)^2}{2}
 \end{cases}, \qquad
 \begin{cases}
 w_{11} = -\frac{w_1(w_1-2\alpha)}{2\alpha},\\
 w_{12} = 0,\\
 w_{22} = -\frac{w_2(w_2-2)}{2}
 \end{cases}.
 \]
 The transformations $(\tilde{z}_1, \tilde{z}_2, \tilde{w}) = (e^{z_1}, z_2, -\frac{w}{2\alpha})$ and $(\tilde{z}_1, \tilde{z}_2, \tilde{w}) = (e^{z_1}, e^{z_2}, -\frac{w}{2\alpha})$ respectively map the above systems to (after dropping tildes) $w_{11} = (w_1)^2$, $w_{12} = 0$, $w_{22} = \lambda (w_2)^2$, where $\lambda = \alpha$.  As in Example \ref{Ex:D7}, this leads to $a = \frac{3}{4} (\frac{\alpha-1}{\alpha+1})$.  The explicit CR symmetry algebras (see Table \ref{F:affineD}) are isomorphic to $\fsl(2,\bbR) \times \fsu(2) \times \bbR$ and $\fsu(2) \times \fsu(2) \times \bbR$ respectively. In the latter case, the data obtained so far  (together with the table at the end of Example \ref{Ex:D7}) is sufficient to obtain the corresponding anti-involution classification in Table \ref{F:affineD}.  However, in the $\fsl(2,\bbR) \times \fsu(2) \times \bbR$ cases it is insufficient.  To do this, we need to find a Cartan basis (\S \ref{S:RealForms}) $\{ e_1, ..., e_7 \}$ with the CR symmetry algebra arising as the fixed point set of one of the anti-involutions in Table \ref{F:AI-reps}.
 
 First, we should work at a nice basepoint.  Apply a real affine transformation $(x,y,u) \mapsto (x, y - 1, u-\alpha x-y-\alpha \ln(2)+1)$, so $u = \alpha\ln(e^{2x}+1) + \ln( y )$ becomes
 \begin{align} \label{E:D7-case2}
 u = \alpha\ln(e^{2x}+1) + \ln(y+1) - \alpha x -  \alpha\ln(2) - y.
 \end{align}
 The corresponding tubular hypersurface has CR symmetry algebra
 \begin{align*}
 f_1 &= i \partial_{z_1}, \quad f_2 = i\partial_{z_2}, \quad f_3 = i\partial_w, \quad
 f_4 = (z_2+1)\partial_{z_2} - z_2 \partial_w, \quad f_5 = i\frac{z_2(z_2 + 2)}{2} \partial_{z_2} - i \frac{(z_2)^2}{2} \partial_w, \\
 f_6 &= \cosh(z_1) \partial_{z_1} + \alpha \sinh(z_1) \partial_w, \quad
 f_7 = i\sinh(z_1) \partial_{z_1} + i\alpha (\cosh(z_1) - 1) \partial_w.
 \end{align*}
These are also point symmetries for the corresponding ILC structure:
 \begin{align} \label{E:D7-case2-ILC}
 w_{11} = \frac{\alpha^2 - (w_1)^2}{2\alpha}, \quad w_{12} = 0, \quad w_{22} = -\frac{(w_2+1)^2}{2}.
 \end{align}
 The complexification of \eqref{E:D7-case2} has $w_1 = \frac{\partial w}{\partial z_1}$ and $w_2 = \frac{\partial w}{\partial z_2}$ vanishing at $(z_1,z_2,w) = (0,0,0)$ (and $(a_1,a_2,b) = (0,0,0)$).  At the basepoint $(z_1,z_2,w,w_1,w_2) = (0,0,0,0,0)$, we have $\fk = \langle f_5,\, f_7 \rangle$ and
 \[
  \fe = \langle E_1:= f_6 - i f_1,\, E_2:= f_4 - i f_2 \rangle + \fk, \quad \fv = \langle V_1:= f_6 + i f_1,\, V_2:= f_4 + i f_2 \rangle + \fk.
 \]
 Recalling that $a = \frac{3}{4} (\frac{\alpha-1}{\alpha+1} )$, we find that a Cartan basis is given by:
 \begin{align*}
 e_1 &= E_1, \quad
 e_2 = E_2 - i f_5, \quad
 e_3 = -\frac{1}{2} (4a-3) V_1, \quad
 e_4 = -\frac{1}{2} (4a+3) (V_2 + i f_5),\\
 e_5 &= i (4 a + 3) f_3+\frac{i}{2} (2a + 1) f_5 - \frac{i}{2}(2a-1) f_7, \quad
 e_6 = -i f_5 - i f_7, \quad
 e_7 = -i f_5 + i f_7.
 \end{align*}
 Although $i e_5, i e_6, i e_7$ are real, i.e.\ lie in the CR symmetry algebra, this Cartan basis is not aligned to the representative anti-involutions in Table \ref{F:AI-reps}.  We still need to use the residual basis change freedom $\diag(c_1, c_2, \frac{1}{c_1}, \frac{1}{c_2}, 1,1,1)$ preserving the ILC D.7 structure equations \cite[Table 4.3]{DMT2014}:
 \[
 \begin{array}{|c|c|c|c|c|c|} \hline
 a & \alpha & c_1 & c_2 & \mbox{Anti-involution $\varphi_1^{(\epsilon_1,\epsilon_2)}$} \\ \hline\hline
 a > \frac{3}{4} & \alpha < -1 & \sqrt{\frac{4a-3}{2}} & \sqrt{\frac{(-4a+3)}{2}\alpha} & \varphi_1^{(-1,-1)}\\
 a < -\frac{3}{4} & -1 < \alpha < 0 & \sqrt{\frac{3-4a}{2}} & \sqrt{\frac{4a-3}{2}\alpha} & \varphi_1^{(+1,+1)}
 \\
 |a| < \frac{3}{4} & 0 < \alpha < \infty & 
 \sqrt{\frac{3-4a}{2}} & \sqrt{\frac{(-4a+3)}{2}\alpha} & \varphi_1^{(+1,-1)} \\ \hline
 \end{array}
 \]
 (Recall that the parameter redundancy $a \mapsto -a$ induces the flip $\varphi_1^{(\epsilon_1,\epsilon_2)} \mapsto \varphi_1^{(\epsilon_2, \epsilon_1)}$, which explains why it is not necessary to list $\varphi_1^{(-1,+1)}$ above.)
 
 \end{example}

 \begin{example} For $\alpha\in \bbR$, $u = \alpha \arg(ix+y) + \ln(x^2+y^2)$ has corresponding tubular ILC structure:
 \[
 w_{11} = -w_{22} = \frac{w_1 w_2 \alpha - (w_1)^2 + (w_2)^2}{\alpha^2+4}, \quad
 w_{12} = \frac{((w_2)^2-(w_1)^2) \alpha - 4 w_1 w_2}{2(\alpha^2+4)}.
 \]
 The {\em complex} affine transformation $(\tilde{z}_1, \tilde{z}_2, \tilde{w}) = (z_2 + i z_1, z_1+i z_2, c w)$, where $c = -\frac{i\alpha+2}{\alpha^2+4}$, transforms this system (after dropping tildes) to the D.7 model $w_{11} = (w_1)^2, \, w_{12} = 0, \, w_{22} = \lambda (w_2)^2$ with $\lambda = \frac{2 + i\alpha}{2 - i\alpha}$.  As mentioned in Example \ref{Ex:D7}, $\lambda = \frac{3+4a}{3-4a}$, so we obtain $a = \frac{3}{8} i \alpha$.  From Table \ref{F:AI-reps}, we see that $\varphi_2$ gives rise to these CR structures (that have $\fsl(2,\bbC)_\bbR \times \bbR$ symmetry).  It is clear that $a \mapsto -a$ is a parameter redundancy here since the reflection $x\mapsto -x$ induces $\alpha \mapsto -\alpha$.
 \end{example}

Existence of a tubular representation can characterized at a Lie-algebraic level.

\begin{defn} A {\em tubular realization} for the complex homogeneous ILC structure $(\fs,\fk;\fe,\fv)$ in dimension $\dim(\fs/\fk) = 2n+1$ is a pair $(\fa,\varphi)$, where 
 \begin{enumerate}
 \item[(T.1)] $\fa \subset \fs$ is an $n$-dimensional abelian subalgebra;
 \item[(T.2)] the centralizer $\fc(\fa) = \{ X \in \fs : [X,Y] = 0,\, \forall Y \in \fa \}$ coincides with $\fa$ itself;
 \item[(T.3)] $\fa$ is transverse to both $\fe$ and $\fv$, i.e. $\fa \cap \fe = 0 = \fa \cap \fv$, so $\fa$ complements both $\fe$ and $\fv$ in $\fs$;
 \item[(T.4)] $\varphi$ is an admissible anti-involution of $(\fs,\fk;\fe,\fv)$ (see \S \ref{S:RealForms}) that preserves $\fa$.
 \end{enumerate}
\end{defn}

 For a tubular ILC structure (arising from \eqref{s4:surf_f}), the ILC symmetry algebra $\fs$ contains $\fa=\tspan_\bbC\{ \partial_{z_1}-\partial_{a_1}, ..., \partial_{z_n}-\partial_{a_n}\}$, which is $n$-dimensional abelian (T.1).  By Lemma \ref{L:proj}, we can identify $\fa$ with its projection $\langle \partial_{z_1}, ..., \partial_{z_n}\rangle$ to $M/V=\bbC^n$, and similarly its projection $\langle \partial_{a_1}, ..., \partial_{a_n}\rangle$ to $M/E=\bar\bbC^n$.  Every vector field on $\bbC^n$ commuting with all $\partial_{z_j}$ is itself an infinitesimal translation, so (T.2) holds.  The Lie group of translations acts transitively on $M/V=\bbC^n$ and $M/E=\bar\bbC^n$, which by dimension reasons forces $\fa  \cap \fv=0=\fa\cap \fe$, so (T.3) holds.  The anti-involution $\varphi$ of $\fs$ induced by $\tau$  \eqref{s4:anti-inv} preserves $\fa$, i.e. (T.4) holds, and $\fa^\varphi = \tspan_\bbR\{ i\partial_{z_j} \}$ (or more precisely, $i(\partial_{z_j} - \partial_{\bar{z}_j})$ here).
 
 The elements of $N(\fa)/\fa$ are in 1-1 correspondence with complex affine symmetries of~\eqref{s4:tube}. Indeed, let $X\in \fX(M/V)$ lie in $N(\fa)$, so $[\partial_{z_j},X]=A_j^k \partial_{z_k}$, where $A_j^k \in \bbC$.  Then $X= A^k_j z_k \partial_{z_j} \mod{\spn{\partial_{z_k}}}$, i.e. $X$ is complex-affine modulo $\fa$. Obviously any complex affine symmetry belongs to $N(\fa)$. Moreover since the second fundamental form of \eqref{s4:aff_eq} is non-degenerate, there are no symmetries of~\eqref{s4:tube} that are translations. This makes the correspondence with $N(\fa)/\fa$ one to one.
 
 Most of the CR structures in this article are tubular.  (Exceptions are discussed in \S \ref{S:Cartan} and \S \ref{S:Winkelmann}.)
 
\begin{theorem}\label{s4:t1}
Every multiply-transitive Levi non-degenerate hypersurface in $\bbC^3$ admits a tubular realisation unless it is a real form of  D.6-3 $(a^2 \in \bbR \backslash \{ 0, 9 \})$ or N-6.2 $(b^2 = \bar{a}^2 \in \bbC \backslash \bbR)$.
\end{theorem}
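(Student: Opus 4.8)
The plan is to reduce the statement to a finite check on Lie-theoretic data. By Corollary~\ref{C:CR-DN}, the complexification $M^c$ of such a hypersurface is a multiply-transitive complex ILC structure of type D or N with $\dim(\fs)\ge 6$, and the CR structure it carries is encoded by one of the representative admissible anti-involutions $\varphi$ in Table~\ref{F:AI-reps}. By definition a tubular realisation is a pair $(\fa,\varphi)$ in which $\varphi$ is precisely this anti-involution and $\fa\subset\fs$ is a $2$-dimensional abelian subalgebra satisfying (T.1)--(T.3) and preserved by $\varphi$. So the theorem amounts to deciding, for each of the finitely many type D or N models of \cite[Tables~4.2--4.4]{DMT2014} and each of its admissible anti-involutions, whether such an $\fa$ exists.

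For the positive direction I would exhibit the datum $(\fa,\varphi)$ for every admissible anti-involution in Table~\ref{F:AI-reps} except those attached to D.6-3 with $a^2\in\bbR\backslash\{0,9\}$ and to N.6-2 with $b^2=\bar a^2\notin\bbR$. The most economical route is via explicit tubular local equations: for any tubular hypersurface the translation subalgebra $\fa=\tspan_\bbC\{\partial_{z_1}-\partial_{a_1},\ \partial_{z_2}-\partial_{a_2}\}$ together with the anti-involution $\tau$ of~\eqref{s4:anti-inv} is a tubular realisation, and (T.1)--(T.4) were verified in general in the discussion preceding this theorem. Concretely, for each of the remaining models one produces a tubular local equation -- these are assembled, with their symmetry algebras, in Tables~\ref{F:affineN} and~\ref{F:affineD} and worked through in the Examples of this section -- and then matches the resulting $(\fa,\varphi)$ against Table~\ref{F:AI-reps} (in some $\fsl(2,\bbR)\times\fsu(2)$ cases this requires explicitly passing to a Cartan basis, as in the Examples above). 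One further has to check that the boundary values $a^2\in\{0,9\}$ for D.6-3 are degenerate: there the ILC structure acquires extra symmetry and is absorbed into one of the higher-dimensional families, which is itself tubular, so that no $6$-dimensional case is overlooked.

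For the negative direction the key tool is self-duality. By Proposition~\ref{s4:prop_SD} any tubular ILC structure is self-dual, so it suffices to show that the two exceptional families are \emph{not} self-dual. For N.6-2 this is Remark~\ref{R:N62}: a CR real form has $b^2=\bar a^2$, whereas the underlying ILC structure is self-dual precisely when $b^2=a^2$, and these are compatible only for $a^2\in\bbR$. For D.6-3 one reads off the duality action on the parameter from \cite[Table~A.6]{DMT2014} (just as for N.6-2), finding that the complex D.6-3 structure is self-dual only at the degenerate values $a^2\in\{0,9\}$; hence for $a^2\in\bbR\backslash\{0,9\}$ it carries no tubular realisation. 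As an alternative for D.6-3, since here $\fs^\bbC\cong\fsl(2,\bbC)\times\fsl(2,\bbC)$ and $\fc_{\fsl(2,\bbC)}(X)=\tspan\{X\}$ for every nonzero $X\in\fsl(2,\bbC)$, a short projection argument shows that the only $2$-dimensional abelian self-centralising complex subalgebras are products $\tspan\{X_1\}\oplus\tspan\{X_2\}$ of lines in the two factors; imposing transversality to $\fe$ and $\fv$ then leaves a short explicit list of candidate subalgebras in the Cartan basis of \cite[Table~4.3]{DMT2014}, and one verifies that none is preserved by the associated anti-involution, so that (T.4) always fails.

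I expect the main obstacle to be the positive direction: producing, uniformly, an explicit tubular local equation for each self-dual model together with each admissible anti-involution, and then correctly identifying which entry of Table~\ref{F:AI-reps} it realises. The negative direction is comparatively short once the duality action on the D/N parameters is known, the only real subtlety there being to pin down the degenerate loci -- $a^2\in\{0,9\}$ for D.6-3 and $a^2\in\bbR$ for N.6-2 -- where self-duality, and hence a tubular representation, is restored.
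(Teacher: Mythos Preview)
Your treatment of N.6-2 via self-duality and Remark~\ref{R:N62} matches the paper's. But your argument for D.6-3 contains a genuine error: the complex D.6-3 ILC structure is self-dual for \emph{every} parameter value, not only for $a^2\in\{0,9\}$. From the structure equations~\eqref{E:D63-streq} one checks directly that the $\bbC$-linear map $(e_1,e_2,e_3,e_4,e_5,e_6)\mapsto(e_4,e_3,e_2,e_1,-e_5,e_6)$ is a Lie algebra automorphism swapping $\fe$ and $\fv$. So Proposition~\ref{s4:prop_SD} yields no obstruction for D.6-3, and your appeal to \cite[Table~A.6]{DMT2014} is mistaken.

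The paper's argument for D.6-3 is instead purely structural and very short: for $a^2\neq 9$ one has $\fs\cong\fsl(2,\bbC)\times\fsl(2,\bbC)$, a $6$-dimensional semisimple algebra; since every abelian subalgebra of $\fsl(2,\bbC)$ is $1$-dimensional, $\fs$ contains no $3$-dimensional abelian subalgebra, and (T.1) fails outright. Note that $\fa$ must be $3$-dimensional here, not $2$: the translation algebra for a tubular hypersurface in $\bbC^3$ has three generators, and every entry of Table~\ref{s4:tbl_a} lists three elements. (The statement of (T.1) is slightly misindexed relative to the ambient $\bbC^n$; the discussion immediately following the definition makes the intended dimension clear.) Your ``alternative'' route for D.6-3, classifying $2$-dimensional abelian self-centralising subalgebras and then checking (T.4), is therefore solving the wrong problem. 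For $a^2=9$ the algebra degenerates to $\fso(3,\bbC)\ltimes\bbC^3$, whose abelian radical supplies the required $3$-dimensional $\fa$; this is why that boundary case is tubular while the generic case is not. The value $a^2=0$ is not a degeneration but is simply excluded from the D.6-3 family.
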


\begin{proof}
First, we show that the exceptional models do not have any tubular realisations. We can see this immediately for D.6-3 $(a^2\neq 9)$ since the 6-dimensional symmetry algebra in this case is semisimple and cannot have a 3-dimensional abelian subalgebra.

As discussed in \S \ref{S:RealForms}, the ILC N.6-2 models have essential parameters $(a^2,b^2) \in \bbC^2$.  These admit underlying CR structures when $b^2 = \bar{a}^2 \in \bbC$.  By Remark \ref{R:N62}, such structures are self-dual when $b^2 = a^2 \in \bbR$, which is a necessary condition for tubular realisability according to Proposition~\ref{s4:prop_SD}.

The existence of tubular realisations for all other cases is examined on the Lie algebra level. In Table \ref{s4:tbl_a} we list abelian subalgebras $\fa$ defining tubular realizations. Corresponding affine surfaces and symmetries of induced tubular CR hypersurfaces are listed in Tables \ref{F:affineN} and \ref{F:affineD}.
\end{proof}

 \begin{table}[h]
 {\tiny \[
 \begin{array}{|@{}c@{}|c|c|c|} \hline
 \mbox{Model} & \mbox{Anti-involution $\varphi$} & \mbox{$\varphi$-stable $\fa \subset \fs$} & \dim(N(\fa) / \fa)\\ \hline\hline
 \mbox{N.8} & \varphi & e_1-e_4,\,\, e_2 - e_3,\,\, e_5 & 2  \\ \hline
 \mbox{N.7-2} & \varphi^{(+1)} & e_1-e_4+2 e_6+\frac{1}{2} e_7,\,\, e_2 - e_3 - 2 e_7,\,\, e_5 + e_7 & 2 \\
 & \varphi^{(-1)} & e_1 + e_4, \,\, e_2 + e_3, \,\, e_5 + e_7 & 0\\ \hline
  \mbox{N.6-1}
  & \varphi & e_1-\frac1\tau e_4,\,\, e_2-\frac1\tau e_3 - \frac2a e_6,\,\, e_5 + \frac1{a^2+1} e_6 & \begin{array}{l} 1,\, \mbox{ for } a^2=2;\\
 2,\, \mbox{ for } a^2 \neq 2 \end{array}\\ \hline
 \begin{array}{c} \mbox{N.6-2}\\ b = a, \\ a^2 \in \bbR \backslash \{ -4 \} \end{array} & \varphi & e_1- e_4,\,\, e_2-e_3 - a e_6,\,\, e_5 - e_6 & 1\\ \hline
 \begin{array}{c} \mbox{N.6-2} \\ b = a = 2i \end{array} & \varphi & e_1+ e_4,\,\, e_2+e_3 - 2i e_6,\,\, e_5 + e_6  & 2\\ \hline
 \mbox{D.7} & \varphi_1^{(\epsilon_1,\epsilon_2)} &
\begin{array}{c} e_1 - \epsilon_1 e_3 -\sqrt{\frac{4 a-3}2}e_6 + \sqrt{\frac{4 a-3}2}e_7,\\
 e_2 - \epsilon_2 e_4  -\sqrt{\frac{4 a+3}2}e_6 - \sqrt{\frac{4 a+3}2}e_7,
 \\ e_5+\frac12 e_6 + a e_7 ;
\end{array} & 
  \begin{array}{l@{\,}l}
  1-\frac{\epsilon_1+\epsilon_2}2, & \mbox{ for } |a|<\frac34;
  \\[0.02in]
   1+\frac{\epsilon_1-\epsilon_2}2, &\mbox{ for }  a>\frac34;\\[0.02in]
   \frac{3-\epsilon_2}{2}, &\mbox{ for }  a=\frac34
  \end{array}
\\  \hline \mbox{D.7} & \varphi_2 & 
\begin{array}{c}
\frac2{3-4 a}e_1 + e_3 - e_6 + e_7,\,\,
e_2 + \frac2{3+4 a}e_4  + e_6 + e_7,
\\ e_5+\frac12 e_6 + a e_7 
\end{array}
& 2\\ \hline
 \mbox{D.6-1} &  \varphi^{(\epsilon)} & e_1 - e_3 + \frac{1}{\sqrt{2}} e_6, \,\, e_2 + e_4,\,\, e_5 + \frac{3}{4} e_6 & 2\\ \hline
 \mbox{D.6-2} &  \varphi^{(\epsilon)} & e_1 - \epsilon e_3,\,\, e_2-\tau e_4 +(a-\frac13),\,\, e_5+\frac{3a+5}{4(a-1)}e_6 & 2 \\ \hline
 \begin{array}{c} \mbox{D.6-3}\\ a^2=9 \end{array} & \varphi_1,\, \varphi_2^{(\epsilon)} & e_1 + \frac{3}{a} e_4,\,\, e_2 + \frac{3}{a} e_3,\,\, e_5 & 3\\ \hline
 \end{array}
 \]
 \caption{Tubular realizations $(\fa,\varphi)$}
 \label{s4:tbl_a}
 }
 \end{table}

\begin{example}[N.6-2 tubular cases] \label{EX:N62-tubular} In $\bbR^3$, consider the affine surface $u = y\exp(x) + \exp(\alpha x)$, for $\alpha \in \bbR \backslash \{ -1,0,1,2 \}$.  Proceeding similarly as in Example \ref{Ex:D7} yields the complex ILC structure
\[
 w_{11} = \frac{\alpha(\alpha-1) (w_2)^\alpha + w_1}{2}, \quad w_{12} = \frac{w_2}{2}, \quad w_{22} = 0.
\]
 Then $(\tilde{z}_1,\tilde{z}_2,\tilde{w}) = (\exp(\frac{z_1}{2}),z_2\exp(\frac{z_1}{2}),cw)$, where $c = 2^{\frac{1}{\alpha-1}}$, transforms the above system to
 \[
 \tilde{w}_{11} = \alpha(\alpha-1) (\tilde{z}_1)^{\alpha-2} (\tilde{w}_2)^\alpha, \quad \tilde{w}_{12} = \tilde{w}_{22} = 0.
 \]
 From \cite[Table 1.1]{DMT2014}, this is equivalent (for $\alpha \neq 0,1$) to an N.6-2 model with $\mu = \alpha$ and $\kappa = \alpha-2$. The parameters $(\mu,\kappa)$ are related to Cartan basis parameters $(a,b)$ by \cite[Table A.4]{DMT2014}:
 \begin{align} \label{E:N62-params}
 \mu = \frac{1}{2} + \frac{3b}{2\sqrt{b^2+4}}, \quad \kappa = -\frac{3}{2} + \frac{3a}{2\sqrt{a^2+4}}.
 \end{align}
 Hence, $\mu = \alpha$ and $\kappa = \alpha-2$ forces for $\alpha \in \bbR \backslash \{ -1,0,1,2 \}$ the relation displayed in Table \ref{F:affineN}:
 \begin{align} \label{E:N62-tube}
 b^2 = a^2 = \frac{-(2\alpha-1)^2}{(\alpha+1)(\alpha-2)}\in \bbR \backslash \left( [-4,0) \cup \left\{ \frac{1}{2} \right\}\right).
 \end{align}
 
For $u = xy + \exp(x)$, we get ILC structure $w_{11} = \frac{1}{2} e^{w_2}, \, w_{12} = \frac{1}{2}, \, w_{22} = 0$.  Then $(\tilde{z}_1,\tilde{z}_2,\tilde{w}) = (\frac{z_1}{2},\frac{z_2}{2},\frac{w}{2}- \frac{z_1z_2}{4})$ transforms it to the N.6-2 model in \cite[Table 1.1]{DMT2014} with $\mu = \kappa = \infty$, which is the same as $b^2 = a^2 = -4$ \cite[Table A.4]{DMT2014}.  Similarly, $u = y\exp(x) - \frac{x^2}{2}$ yields the system 
 $w_{11} = \frac{1}{2} (w_1 + \ln(w_2) - 1), \, w_{12} = \frac{1}{2} w_2,\, w_{22} = 0$.  Then $(\tilde{z}_1,\tilde{z}_2,\tilde{w}) = (e^{\frac{z_1}{2}}, \frac{z_2}{2}\exp(\frac{z_1}{2}), \frac{w}{2}+\frac{(z_1)^2}{8})$ transforms this to the N.6-2 model with $\mu=0, \kappa = -2$, i.e.\ $b^2 = a^2 = \frac{1}{2}$.
 
 It remains to describe the tubular cases corresponding to $b^2 = a^2 \in (-4,0]$.  For $\beta \in \bbR$, consider $u\cos(x) + y\sin(x) = \exp(\beta x)$.  Replacing $(x,y,u)$ by $(\frac{ix}{2},i(y+u),-y+u)$ maps this to $u e^{-x/2} - y e^{x/2} = e^{\beta i x/2}$, or $u = y e^x + e^{\alpha x}$ with $\alpha = \frac{\beta i + 1}{2}$.  We get to \eqref{E:N62-tube} as above, which yields $b^2 = a^2 = -\frac{4\beta^2}{\beta^2+9}$.  This yields the classification on the third line from the end of Table \ref{F:affineN}.
\end{example}
 
 \begin{example}
 In the D.6-2 case, consider the affine surface $u = y^2 + \epsilon x^\alpha$ for $x > 0$ and $\alpha \in \bbR \backslash \{ 0, 1, 2 \}$.  Following the procedure above, we get $\mu = \frac{\alpha-2}{\alpha-1}$, where $\mu$ is the parameter appearing in \cite[Table 1.1, D.6-2]{DMT2014}.  Since $\mu = \frac{6(a-1)}{3a-4}$ from  \cite[Table A.4]{DMT2014}, we get $a = \frac{2}{3}(\frac{\alpha+1}{\alpha}) \in \bbR \backslash \{ \frac{2}{3}, \frac{4}{3}, 1 \}$.  From Table \ref{F:AI-reps}, we have $\tau = -\frac{(3a-2)(a-1)}{9} = \frac{2(\alpha-2)}{27\alpha^2}$, and for $\rho = \pm 1$ the anti-involution $\varphi^{(\rho)}$ for D.6-2 in Table \ref{F:AI-reps} yields a definite structure if and only if $\rho \tau > 0$, i.e. $\rho(\alpha-2) > 0$. On the other hand, we find that the 2nd fundamental form of $u = y^2 + \epsilon x^\alpha$ is definite if and only if $\epsilon \alpha (\alpha-1) > 0$.  This forces $\rho = \epsilon$ for $\alpha \in (0,1) \cup (2,\infty)$ and $\rho = -\epsilon$ for $\alpha \in (-\infty,0) \cup (1,2)$, i.e.\ $\rho = \epsilon\,\sgn(\alpha(\alpha-1)(\alpha-2))$.  In terms of $a$, this is the same (after simplification) as $ \rho = \epsilon\,\sgn( (3a-2)(3a-4)(a-1))$.
 \end{example}

As in the above examples, we can consider other affine-homogeneous surfaces in $\bbR^3$ listed in \cite{DKR1995}.  According to \cite{DKR1995}, all of those with at least 3-dimensional affine symmetry algebra are given by cylinders with homogeneous base, quadrics, or the Cayley surface $u=xy-\frac{x^3}3$.  In almost all cases, these give surfaces with degenerate 2nd fundamental form or lead to flat CR structures.  The only exceptions are the (pseudo-)spheres $u^2 + \epsilon_1 x^2 + \epsilon_2 y^2 = 1$, where we can take $(\epsilon_1,\epsilon_2) \in \{ \pm (1,1),(1,-1) \}$.  These lead to complex ILC structures of type D.6-3 (with $a^2 = 9$ and symmetry algebra $\fso(3,\bbC) \ltimes \bbC^3$) and their corresponding CR real forms.  In all other cases in \cite{DKR1995}, the affine symmetry dimension is precisely 2.

\begin{remark} Among this remaining list in \cite{DKR1995}, some do not give multiply-transitive tubular hypersurfaces.  Namely, referring to the enumeration in \cite{DKR1995}:
\begin{itemize}
\item (5) with $\alpha\neq -1,0,8$, (16) with $\alpha\neq 4$ and (18) are of type I;
\item (1) and (2) are of type II; 
\item (5) with $\alpha=8$, (6) and (16) with $\alpha= 4$ are of type D with 5-dimensional symmetry algebra.
\end{itemize}
 Recall from Table \ref{F:AI-reps} that all the CR structures in our classification are of type N or D.
\end{remark}

 We conclude this section by showing that the dimensions of the affine symmetry subalgebras, i.e.\ $\dim(N(\fa)/\fa)$, listed in Table \ref{s4:tbl_a} are maximal among all possible tubular realizations $(\fa,\varphi)$.  It remains to prove this for those cases in Table \ref{s4:tbl_a} with $\dim(N(\fa)/\fa) \leq 1$.  See Tables \ref{F:affineN} \& \ref{F:affineD} for models.\\

\textbf{N.7-2.} Here, $\fs = \fsl(2,\bbC) \ltimes (V_2 \op V_0)$, where $V_k$ denotes the standard $(k+1)$-dimensional irreducible $\fsl(2,\bbC)$-module. (Here, $V_2 \op V_0$ is an abelian ideal in the symmetry algebra.)  Then $\varphi^{(+1)}$ and $\varphi^{(-1)}$ (see Table \ref{F:AI-reps}) lead to CR structures with $\fsl(2,\bbR) \ltimes (V_2 \op V_0)$ and $\fsu(2) \ltimes (V_2 \op V_0)$ symmetry respectively.\footnote{We have abused notation here: $V_2$ refers to the adjoint representation of $\fsl(2,\bbR)$ or $\fsu(2)$ respectively, and $V_0$ is the trivial 1-dimensional representation.}  It suffices to consider the latter case.  We will show $N(\fa)=\fa$ always.

 Since $\fa \subset \fs$ is self-centralizing, then $\fa \not\subset V_2\op V_0$ or else $\fc(\fa) \supset V_2\op V_0$.  The projection of $\fa$ on $\fsu(2)$ must be 1-dimensional. Consider $x+v\in \fa$, where $0 \neq x\in \fsu(2)$ and $v\in V_2\op V_0$. From the centralizer of $x+v$, we get $\fa=\langle 
x+v,v_0,v_1\rangle$, where $v_0$ spans $V_0$ and $0 \neq v_1\in V_2$ is in the kernel of $x$.  Since $x+v$ is semisimple and has 1-dimensional kernels on $\fsu(2)$ and $V_2$, then $\dim(N(\fa)) = 3$.\\

\textbf{D.7.} When $a\neq\pm\frac{3}{4}$, the symmetry algebra is  $\fs=\fsl(2,\bbC)\times \fsl(2,\bbC)\times \bbC$.  All real forms of $\fs$ are determined by real forms of the semisimple part, namely $\fsl(2,\bbR)\times \fsl(2,\bbR)$, $\fsl(2,\bbR)\times \fsu(2)$, $\fsu(2)\times \fsu(2)$, and $\fsl(2,\bbC)_\bbR$. Any 3-dimensional abelian subalgebra $\fa \subset \fs$ is generated by the center and one element $T_j$ from each copy of $\fsl(2,\bbC)$, and $N(\fa)$ is the  intersection of the normalizers of $T_j$.  The element $T_j$ has a 2-dimensional normalizer in the corresponding copy of $\fsl(2,\bbC)$ if it is nilpotent and 1-dimensional otherwise.  Since $\fsu(2)$ consists of semisimple elements, we immediately see that $\dim( N(\fa)) \leq 5$ for the $\fsl(2,\bbR)\times \fsl(2,\bbR)$ and $\fsl(2,\bbC)_\bbR$ cases,  $\dim( N(\fa)) \leq 4$ for the $\fsl(2,\bbR)\times \fsu(2)$ case, and finally $\dim( N(\fa))=3$ for the $\fsu(2)\times \fsu(2)$ case.

 When $a=\pm\frac34$, $\fs = \fsl(2,\bbC) \times \fr$, where $\fr$ has relations $[S,X]=X,\,\, [S,Y]=-Y,\,\, [X,Y]=Z$.  Since $\fc(\fa)=\fa$, and $\fr$ contains no 3-dimensional abelian subalgebra, then $\fa$ must be spanned by the central element $Z$, a non-central element $R \in \fr$, and some $T \in\fsl(2,\bbC)$. The normalizer in $\fr$ of $R$ is at most 3-dimensional in $\fr$, and the normalizer in $\fsl(2,\bbC)$ of $T$ has dimension 2 if $T$ is nilpotent and 1 if $T$ is semisimple. Hence, if the real form of $\fs$ contains $\fso(3)$ (namely, for $\varphi^{\epsilon_1,1}$), then $\dim(N(\fa)) \leq 4$.\\

\textbf{N.6-2.}  Tubular CR structures arise when $b^2 = a^2 \in \bbR$.  Using the parameter redundancy (see \S\ref{S:RealForms}), we can always assume that $b = a$. In the generic case, $b^2 = a^2 \in \bbR \backslash \{ -4, \frac{1}{2} \}$, consider the basis of $\fs$ from \cite[Table A.4]{DMT2014}.  This satisfies $\kappa = \mu-2$ and the commutator relations in \cite[Table A.1]{DMT2014}:
 \[
     \begin{array}{c|cccccccc} 
     & S_1 & S_2 & N_1 & N_2 & N_3 & N_4\\ \hline
     S_1 & \cdot & \cdot & (\mu-1)N_1  & \mu N_2 &  \mu N_3 & (\mu-1)N_4 \\ 
     S_2 & & \cdot & (\mu-1)N_1 & (\mu-1)N_2 & \mu N_3 & \mu N_4 
     \end{array}
 \]
 with $\fn= \langle N_1,N_2,N_3,N_4\rangle$ an abelian ideal.  From \eqref{E:N62-params}, we have $\mu = \frac{1}{2} + \frac{3a}{2\sqrt{a^2+4}} \in \bbC \setminus \{0,1 \}$, and according to \cite[Table A.5]{DMT2014} models parametrised by $\mu$ and $1-\mu$ are equivalent.

To show $\dim(N(\fa)) \leq 4$ for any affine realization $\fa \subset \fs$, it suffices to show $\dim(N(\fa) \cap \fn) \leq 2$.  First note that $\fa \not\subset \fn$, since otherwise its centralizer would contain $\fn$ (4-dimensional). Therefore $\fa$ must contain an element $T = \alpha S_1 + \beta S_2 + v$, where $v \in \fn$ and $(\alpha,\beta) \neq (0,0)$.  Since $\fa$ is abelian, then $(\Ad_T|_{N(\fa)})^2 = 0$.  Since $\Ad_T|_\fn$ is diagonalizable, then $\ker(\Ad_T|_\fn)^2 = \ker(\Ad_T|_\fn)$.  Hence, $N(\fa) \cap \fn \subset \ker(\Ad_T|_\fn)$.  We want $\dim(\ker(\Ad_T|_\fn)) \leq 2$.  The eigenvalues of $\Ad_T|_\fn$ are
 \begin{equation} \label{E:eqn_mu}
 (\alpha+\beta)(\mu-1), \quad
 (\alpha+\beta)\mu -\beta,\quad
 (\alpha+\beta)\mu,\quad
 (\alpha+\beta)\mu - \alpha,
 \end{equation}
 and $\dim(\ker(\Ad_T|_\fn)) \ge 3$ would contradict $(\alpha,\beta) \neq (0,0)$.  Thus, $\dim(N(\fa) \cap \fn) \leq 2$ follows.
 
The two remaining cases are $a^2=b^2=-4$ and $a^2=b^2=\frac{1}{2}$. The former admits an affinely homogeneous tubular representation, while the latter has structure constants (see \cite[Table A.4]{DMT2014}):
\[  \begin{array}{c|cccccccc} 
     & S_1 & S_2 & N_1 & N_2 & N_3 & N_4\\ \hline
     S_1 & \cdot & N_3 & -N_1  & \cdot &  \cdot & -N_4 \\ 
     S_2 & & \cdot & -N_1 & -N_2 & \cdot & \cdot \end{array} \]
 (Again, $\fn= \langle N_1,N_2,N_3,N_4\rangle$ is an abelian ideal.)  The condition $\dim(\ker(\Ad_T|_\fn)) \leq 2$ easily follows.\\

\textbf{N.6-1 $(a^2=2)$.}
 From \cite[Table A.1]{DMT2014}, we have abelian $\fn= \langle N_2,N_3,N_4,N_5\rangle$ and commutators
\[  \begin{array}{c|cccccccc} 
     & S & N_1 & N_2 & N_3 & N_4 & N_5\\ \hline
     S & \cdot & N_1 - N_2 & N_2  & 2N_3 &  3 N_4 & 2 N_5 \\ 
     N_1 & & \cdot & N_3 & N_4 & \cdot & \cdot \end{array} \]
 As above, $\fa \not\subset \fn$, and $\fa$ contains $T=\alpha S+\beta N_1 + v$, where $v\in \fn$ and $(\alpha,\beta) \neq (0,0)$.  Since $\dim(\fa) = 3$, then $\dim(\fa \cap \fn) \geq 1$, and $\Ad_T|_{\fa \cap \fn} = 0$. This forces $\alpha=0$.  Hence, $\fa$ must be spanned by $N_1 + t_2 N_2 + t_3 N_3, N_4,N_5$.  Note $N_3 \in N(\fa)$, so $\dim(N(\fa)) \geq 4$.  But $\dim(N(\fa)) \leq 4$, since
 \begin{align*}
 [\gamma S+\delta N_2, N_1+t_2 N_2 + t_3 N_3] 
 &= \gamma (N_1+(t_2-1) N_2 + 2 t_3 N_3)  - \delta N_3\\
 &\equiv \gamma (-N_2 + t_3 N_3)  - \delta N_3 \quad\mod \fa,
 \end{align*}
 indicates that $\gamma S + \delta N_2 \in N(\fa)$ only when $\gamma = \delta = 0$.

\section{Real forms of ILC D.6-3 models}
\label{S:Cartan}
 
 \subsection{Real form symmetry algebras}
 
 The ILC D.6-3 models \cite[Table 4.3]{DMT2014} admit a single essential parameter $a^2 \in \bbC \backslash \{ 0 \}$.  In the Cartan basis, the D.6-3 structure equations are:
 \begin{align} \label{E:D63-streq}
 \begin{array}{c|cccccccc}
 & e_1 & e_2 & e_3 & e_4 & e_5 & e_6\\ \hline
 e_1 & \cdot & \frac{a}{2} e_6 & -e_5 - \frac{3}{2} e_6 & \cdot & -\frac{3}{2} e_1 - \frac{a}{2} e_4 & -e_1\\
 e_2 & & \cdot & \cdot & -e_5 + \frac{3}{2} e_6 & -\frac{3}{2} e_2 - \frac{a}{2} e_3 & e_2\\
 e_3 & & & \cdot & -\frac{a}{2} e_6 & +\frac{3}{2} e_3 + \frac{a}{2} e_2 & e_3\\
 e_4 & & & & \cdot & +\frac{3}{2} e_4 + \frac{a}{2} e_1  & -e_4\\
 e_5 & & & & & \cdot & \cdot\\
 e_6 & & & & & & \cdot
 \end{array}
 \end{align}
 When $a^2=9$ (labelled D.6-3$_\infty$ in \cite{DMT2014}), the symmetry algebra is $\fs \cong \fso(3,\bbC) \ltimes \bbC^3$, the model admits a tubular representation, and all CR real forms are given in Table \ref{F:affineD}.  (The 2nd fundamental form for $u^2 + \epsilon_1 x^2 + \epsilon_2 y^2 = 1$ has definite signature if and only if $\epsilon_1 \epsilon_2 > 0$, so $(\epsilon_1,\epsilon_2) = (1,-1)$ corresponds to $\varphi_1$ since this yields an indefinite structure (Table \ref{F:AI-reps}).  The $\varphi_2^{(\pm 1)}$ cases are then identified from \eqref{E:D63-streq} and the semisimple part of the symmetry algebra.)
 
All models with $a^2 \in \bbC \backslash \{ 0, 9 \}$ are all non-tubular with symmetry algebra $\fs \cong \fsl(2,\bbC) \times \fsl(2,\bbC) \cong \fso(4,\bbC)$.  Recall that the real forms of $\fso(4,\bbC)$ and their Killing form signatures are
 \begin{align*}
 \fso(4) \cong \fsu(2) \times \fsu(2): \quad (0,6); \\
 \fso(3,1) \cong \fsl(2,\bbC)_\bbR: \quad (3,3);\\
 \fso(2,2) \cong \fsl(2,\bbR) \times \fsl(2,\bbR): \quad (4,2);\\
 \fso^*(4) \cong \fsl(2,\bbR) \times \fsu(2): \quad (2,4).
 \end{align*}
 Using our list of the D.6-3 admissible anti-involutions $\varphi$ (Table \ref{F:AI-reps}), we obtain a basis of the real form $\fs^\varphi$ of $\fso(4,\bbC)$,  and classify $\fs^\varphi$ from the signature of its Killing form (Table \ref{F:D63-CR-sym}).  Note that these CR structures only arise when $a^2 \in \bbR \backslash \{ 0, 9 \}$ (Theorem \ref{s4:t1}).
  
 \begin{table}[h]
 \[
 \begin{array}{|cccc|} \hline
 \mbox{Anti-involution} & \mbox{$a^2$ range} & 
 \mbox{Real form of $\fso(4,\bbC)$} & \mbox{Levi-form type}\\ \hline\hline
 \varphi_1 & \begin{array}{cl}
 0 < a^2 < 9 \\
 a^2 > 9
 \end{array} &
 \begin{array}{cl}
 \fso(3,1)  \\
 \fso(2,2)
 \end{array}
 & \mbox{indefinite}\\ \hline
 \varphi_2^{(+1)} &
 \begin{array}{cl}
 0 < a^2 < 9 \\
 a^2 > 9
 \end{array} &
 \begin{array}{cl}
 \fso(4)  \\
 \fso(3,1)
 \end{array}
 & \mbox{definite}\\ \hline
 \varphi_2^{(-1)} &
 \begin{array}{cl}
 0 < a^2 < 9 \\
 a^2 > 9
 \end{array} &
 \begin{array}{cl}
 \fso(2,2)  \\
 \fso(3,1)
 \end{array}
 & \mbox{definite}\\ \hline
 \varphi_3 & a^2 < 0 & \fso^*(4) & \mbox{indefinite}\\ \hline
 \end{array}
 \]
 \caption{CR structures underlying ILC D.6-3 models with $\fso(4,\bbC)$-symmetry}
 \label{F:D63-CR-sym}
 \end{table}

 \subsection{Cartan hypersurfaces}
 \label{S:Cartan-hyp}
 
 Let $(\cdot,\cdot)$ denote a non-degenerate symmetric bilinear form on $\bbC^4$.  The Lie group $\tO(4,\bbC)$ preserves $\cQ = \{ [z] : (z,z) = 0 \} \subset \bbC\bbP^3$ and acts transitively on $\bbC\bbP^3 \backslash \cQ$.  Define $A = \begin{psmallmatrix} (z,z) & (z,\bar{z}) \\ (z,\bar{z}) & (\bar{z},\bar{z}) \end{psmallmatrix}$.  The (complex) scaling $z \mapsto \lambda z$ induces $A \mapsto LA\bar{L}$, where $L = \diag(\lambda,\bar\lambda)$.  On $\bbC\bbP^3 \backslash \cQ$, this scaling action has invariant $\alpha := \frac{(z,\bar{z})^2}{(z,z)(\bar{z},\bar{z})}$.  We will fix $(\cdot,\cdot)$ that is non-degenerate and $\bbR$-valued on $\bbR^4 \subset \bbC^4$, so that $(\bar{z},\bar{z}) = \overline{(z,z)}$.  In this case, $\beta := \frac{(z,\bar{z})}{|(z,z)|} \in \bbR$ is invariant under complex scalings, and $\alpha = \beta^2$.  We refer to the real hypersurfaces of $\bbC\bbP^3 \backslash \cQ$ uniformly described by 
 \begin{align} \label{E:Cartan-hyp}
 (z,\bar{z}) = \beta|(z,z)|
 \end{align}
 as {\em Cartan hypersurfaces}.  A precise list of inequivalent such structures is given in Table \ref{F:Cartan-hyp}.  Restricting $z = (z_1,...,z_4)^\top$ in \eqref{E:Cartan-hyp} to the standard affine coordinate chart $z_1 = 1$ on $\bbC\bbP^3$ recovers Loboda's models \cite[eqns (2.8)\&(2.9)]{Loboda2001a}, \cite[eqns (6)\&(7)]{Loboda2003}, which are generalizations of models found by Cartan \cite[eqn (10)]{Cartan1932b}.  In this subsection, we match these models with their corresponding ILC structures and identify the associated anti-involutions.
 
 Given $z = (z_1,...,z_4)^\top$, consider $(z,z) = \epsilon_1 (z_1)^2 + ... + \epsilon_4 (z_4)^2$, where $\epsilon_j = \pm 1$.  Then $\fso(4,\bbC)$ can be identified with the $\bbC$-span of $\sfZ_{jk} = \epsilon_j z_j \partial_{z_k} - \epsilon_k z_k \partial_{z_j}$, where $1 \leq j < k \leq 4$.  Their $\bbR$-span is identified with the real form of $\fso(4,\bbC)$ that preserves the restriction of $(\cdot,\cdot)$ to $\bbR^4 \subset \bbC^4$.  Let us classify real 5-dimensional orbits of each of $\tO(4),\tO(3,1),\tO(2,2)$ on $\bbC\bbP^3 \backslash \cQ$.  (Here, we take $\tO(3,1)$ corresponding to $\epsilon_1 = \epsilon_2 = \epsilon_3 = +1$ and $\epsilon_4 = -1$, etc.)  Given $[z]$ in such an orbit, $v = \Re(z)$ and $w = \Im(z)$ span a real 2-plane $\Pi$ in $\bbR^4$ (otherwise the orbit is only 3-dimensional). Since $(c_1 + c_2 i)(v + w i) = c_1 v - c_2 w + (c_2 v + c_1 w)i$, then the new real and imaginary parts satisfy
  \begin{align}
 (c_1 v - c_2 w, c_2 v + c_1 w) = c_1c_2 ((v,v) - (w,w)) + (c_1^2 - c_2^2) (v,w).
 \end{align}
 This quadratic in $c_1,c_2$ has discriminant $((v,v) - (w,w))^2 + 4(v,w)^2 \geq 0$.  Thus, using complex multiplication, we may assume that $(v,w) = 0$. Hence, $0 \neq (z,z) = (v+wi,v+wi) = (v,v) - (w,w)$ so that only real ($c_2 = 0$) or purely imaginary ($c_1 = 0$) rescalings preserve this orthogonality.

 Suppose that $(\cdot,\cdot)|_\Pi$ is non-degenerate.  Applying each real orthogonal group and the residual rescalings to the orthogonal pair $\{ v,w \}$, we obtain the following normal forms for $[z] \in \bbC\bbP^3 \backslash \cQ$:
 \begin{enumerate}
 \item $\Pi$ positive-definite: $z = (1,iy,0,0)$, where $0 < y < 1$.  Then $\beta = \frac{1+y^2}{1-y^2} > 1$.
 \item $\Pi$ indefinite: assuming signature $(+++\,-)$ or $(++-\,-)$, we have
 \begin{itemize}
 \item $z = (1,0,0,iy)$, where $0 < y < 1$ $\qRa \beta = \frac{1-y^2}{1+y^2}$ satisfies $0 < \beta < 1$;
 \item $z = (iy,0,0,1)$, where $0 < y < 1$ $\qRa \beta = -\frac{1-y^2}{1+y^2}$ satisfies $-1 < \beta < 0$.
 \end{itemize}
 \end{enumerate}
 The negative-definite case can be made positive-definite by absorbing a sign in \eqref{E:Cartan-hyp} into $\beta$.
 
 For a given orbit $M$ with basepoint a normal form $o = [z]$ above, the standard complex structure $J$ on $\bbC\bbP^3$ induces $H = TM \cap J(TM)$ and a CR structure.  For each such $M$, we determine a Cartan basis $e_1,..., e_6$ of $\fso(4,\bbC)$, i.e.\ so that the ILC D.6-3 structure equations are satisfied.  (Each basis element will be a $\bbC$-linear combination of $\sfZ_{ij}$.)  We begin by choosing a generator $e_6$ for the (complex) isotropy $\fk \subset \fso(4,\bbC)$ so that $\ad(e_6)$ has eigenvalues $(+1,-1,-1,+1)$ on $(\fe+\fv)/\fk$ (which corresponds to $H$).  We then choose a basis $e_1,...,e_4$ adapted to the $\pm i$-eigenspaces for $J$, then rescale the basis so that $[e_1,e_2] = -[e_3,e_4] \in \fk$, and finally choose $e_5$ to satisfy the remaining structure equations.  We summarize the results below.  In each case, with respect to the basis $e_1,...,e_4 \mod \fk$, we have $J = \diag\left(
 \begin{psmallmatrix} 0 & -y\\ \frac{1}{y} & 0 \end{psmallmatrix}, 
 \begin{psmallmatrix} 0 & -y\\ \frac{1}{y} & 0 \end{psmallmatrix}\right)$.  
 We also define $\sfv_1,\sfv_2$ so that 
 \begin{align} \label{E:EV-eigen}
 \begin{cases}
 +i \mbox{ eigenspace } \fe/\fk &= \tspan\{ \overline{\sfv_1}, \overline{\sfv_2} \} \mod \fk,\\
 -i \mbox{ eigenspace } \fv/\fk &= \tspan\{ \sfv_1, \sfv_2 \} \mod \fk.
 \end{cases}
 \end{align}
 \begin{enumerate}
 \item $\Pi$ positive-definite for $\tO(4),\tO(3,1),\tO(2,2)$: $o = [(1,iy,0,0)^\top]$, $0 < y < 1$.  Isotropy: $\sfZ_{34}$.
 \begin{align*}
 &\sfZ_{13}|_o = \partial_{z_3}, \quad \sfZ_{23}|_o = iy \partial_{z_3}, \quad
 \sfZ_{14}|_o = \partial_{z_4}, \quad \sfZ_{24}|_o = iy \partial_{z_4} \quad\mbox{(contact subspace)};\\
 & \begin{cases}
 \sfv_1 := y \sfZ_{13} + i \sfZ_{23}, \\
 \sfv_2 := y \sfZ_{14} + i \sfZ_{24}\\
 \end{cases} \quad a = \frac{3(1-y^2)}{1+y^2} \qRa \framebox{$0 < a^2 < 9$, \quad $\beta > 1$}.
 \end{align*}
 \begin{itemize}
 \item $\tO(4)$-case:\,\,\quad $\rho = \sqrt{\frac{3}{4(1+y^2)}}$, $\begin{cases}
 e_1 = \rho(\overline{\sfv_1} - i \overline{\sfv_2}), \,
 e_2 = \rho(\overline{\sfv_1} + i \overline{\sfv_2}), \\
 e_3 = \rho (\sfv_1 + i \sfv_2),\,
 e_4 = \rho (\sfv_1 - i \sfv_2), \\
 e_5 = \frac{3iy}{1+y^2} \sfZ_{12},\, e_6 = i\sfZ_{34}
 \end{cases}$
 \item $\tO(3,1)$-case: $\rho = \sqrt{\frac{3}{4(1+y^2)}}$, $\begin{cases}
 e_1 = \rho(\overline{\sfv_1} -\overline{\sfv_2}), \,
 e_2 = \rho(\overline{\sfv_1} + \overline{\sfv_2}), \\
 e_3 = \rho (\sfv_1 + \sfv_2),\,
 e_4 = \rho (\sfv_1 - \sfv_2), \\
 e_5 = \frac{3iy}{1+y^2} \sfZ_{12},\, e_6 = \sfZ_{34}
 \end{cases}$
 \item $\tO(2,2)$-case: $\rho = \sqrt{\frac{-3}{4(1+y^2)}}$, $\begin{cases}
 e_1 = \rho(\overline{\sfv_1} + i\overline{\sfv_2}), \,
 e_2 = \rho(\overline{\sfv_1} - i\overline{\sfv_2}), \\
 e_3 = \rho (\sfv_1 - i \sfv_2),\,
 e_4 = \rho (\sfv_1 + i \sfv_2), \\
 e_5 = \frac{3iy}{1+y^2} \sfZ_{12},\, e_6 = i\sfZ_{34}
 \end{cases}$
 \end{itemize}

 \item $\Pi$ indefinite for $\tO(3,1),\tO(2,2)$: $o = [(1,0,0,iy)^\top]$, $0 < y < 1$.  Isotropy: $\sfZ_{23}$.
 \begin{align*}
 &\sfZ_{12}|_o = \partial_{z_2}, \quad
 \sfZ_{24}|_o = iy \partial_{z_2}, \quad
 \sfZ_{13}|_o = \partial_{z_3}, \quad 
 \sfZ_{34}|_o = iy\partial_{z_3} \quad\mbox{(contact subspace)};\\
 & \begin{cases}
 \sfv_1 := y \sfZ_{12} + i \sfZ_{24},\\
 \sfv_2 := y \sfZ_{13} + i \sfZ_{34}\\
 \end{cases} \quad a = \frac{3(1+y^2)}{1-y^2} \qRa \framebox{$a^2 > 9$, \quad $0 < \beta < 1$}
 \end{align*}
 \begin{itemize}
 \item $\tO(3,1)$-case: $\rho = i\sqrt{\frac{3}{4(1-y^2)}}$, $\begin{cases}
 e_1 = \rho(\overline{\sfv_1} - i \overline{\sfv_2}),\,
 e_2 = \rho(\overline{\sfv_1} + i \overline{\sfv_2}),\\
 e_3 = \rho(\sfv_1 + i \sfv_2),\, 
 e_4 = \rho(\sfv_1 - i \sfv_2),\\
 e_5 = \frac{3iy}{1-y^2} \sfZ_{14},\, e_6 = i \sfZ_{23}
 \end{cases}$\\
Anti-involution: $\varphi_2^{(-1)}$, since $\fso(3,1) = \tspan_\bbR\{ i(e_1 + e_3), e_1 - e_3, i(e_2 + e_4), e_2 - e_4, i e_5, ie_6 \}$.
 \item $\tO(2,2)$-case: $\rho = \sqrt{\frac{3}{4(y^2-1)}}$, $\begin{cases}
 e_1 = \rho(\overline{\sfv_1} - \overline{\sfv_2}),\,
 e_2 = \rho(\overline{\sfv_1} + \overline{\sfv_2}),\\
 e_3 = \rho(\sfv_1 + \sfv_2),\, e_4 = \rho(\sfv_1 - \sfv_2),\\
 e_5 = \frac{3iy}{1-y^2} \sfZ_{14},\, e_6 = \sfZ_{23}
 \end{cases}$
 \end{itemize}
 \item $\Pi$ indefinite for $\tO(3,1)$: $o = [(iy,0,0,1)^\top]$, $0 < y < 1$.  Isotropy: $\sfZ_{23}$.
 \begin{align*}
 &\sfZ_{24}|_o = \partial_{z_2}, \quad
 \sfZ_{12}|_o = iy \partial_{z_2}, \quad
 \sfZ_{34}|_o = \partial_{z_3}, \quad 
 \sfZ_{13}|_o = iy\partial_{z_3} \quad\mbox{(contact subspace)};\\
 & \begin{cases}
 \sfv_1 := y \sfZ_{24} + i \sfZ_{12}, \\
 \sfv_2 := y \sfZ_{34} + i \sfZ_{13}\\
 \end{cases} \quad a = \frac{3(1+y^2)}{1-y^2} \qRa \framebox{$a^2 > 9$, \quad $-1 < \beta < 0$}
 \end{align*}
 \begin{itemize}
  \item $\tO(3,1)$-case: $\rho = \sqrt{\frac{3}{4(1-y^2)}}$, 
 $\begin{cases}
 e_1 = \rho(\overline{\sfv_1} - i \overline{\sfv_2}),\,
 e_2 = \rho(\overline{\sfv_1} + i \overline{\sfv_2}),\\
 e_3 = \rho(\sfv_1 + i \sfv_2),\,
 e_4 = \rho(\sfv_1 - i \sfv_2),\\
 e_5 = \frac{3iy}{1-y^2} \sfZ_{14},\, e_6 = i\sfZ_{23}
 \end{cases}$\\
 Anti-involution: $\varphi_2^{(+1)}$, since $\fso(3,1) = \tspan_\bbR\{ e_1 + e_3, i(e_1 - e_3), e_2 + e_4, i(e_2 - e_4), i e_5, ie_6 \}$.  By flipping the signature, we can write this as an $\tO(1,3)$ case with $0 < \beta < 1$.
 
 \item $\tO(2,2)$-case: Flipping the signature reduces this to the earlier $\tO(2,2)$ case.
 \end{itemize}
 \end{enumerate}
 
 A posteriori, we have \framebox{$a^2 = \frac{9}{\alpha} = \frac{9}{\beta^2}$}, so all of Table \ref{F:D63-CR-sym} is covered except for the models arising from $\varphi_3$.  The complete list of inequivalent Cartan hypersurfaces \eqref{E:Cartan-hyp} is given in Table \ref{F:Cartan-hyp}.  This classification is consistent with that of Loboda \cite[eqns (2.8)\&(2.9)]{Loboda2001a}, \cite[eqns (6)\&(7)]{Loboda2003}.
 
 \begin{table}[h]
 \[
 \begin{array}{|ccccc|} \hline
 \mbox{Real form} & \mbox{Signature of $(\cdot,\cdot)|_{\bbR^4}$} & \mbox{$\beta$-range} & \mbox{Levi-form type} & \mbox{Anti-involution}\\ \hline
 \tO(4) & ++++ & \beta > 1 & \mbox{definite} & \varphi_2^{(+1)}\\
 \tO(3,1) & +++- & 0 < \beta < 1 & \mbox{definite} & \varphi_2^{(-1)}\\
 && \beta > 1 & \mbox{indefinite} & \varphi_1\\
 \tO(1,3) & +--- & 0 < \beta < 1 & \mbox{definite} & \varphi_2^{(+1)}\\
 \tO(2,2) & ++-- & 0 < \beta < 1 & \mbox{indefinite} & \varphi_1\\
 && \beta > 1 & \mbox{definite} & \varphi_2^{(-1)}\\ \hline
 \end{array}
 \]
 \caption{Inequivalent Cartan hypersurfaces $\{ [z] : (z,\overline{z}) = \beta|(z,z)| \} \subset \bbC\bbP^3 \backslash \cQ$, where $(z,z) = \epsilon_1 z_1^2 + ... + \epsilon_4 z_4^2$ with $\epsilon_j = \pm 1$.  These complexify to the ILC D.6-3 model with $a^2 = \frac{9}{\beta^2}$.}
 \label{F:Cartan-hyp}
 \end{table}

 \subsection{Quaternionic models} 
 
 It remains to describe the CR structures associated with the anti-involution $\varphi_3$, which are all indefinite type and admit $\fso^*(4)$ symmetry.  To our knowledge, these models are new.  While $\fso^*(4)$ is customarily defined via a skew-Hermitian form $\eta$ on $\bbH^2$, where $\bbH$ is the quaternions, we will instead focus on the special isomorphism $\fso^*(4) \cong \fsl(2,\bbR) \times \fsu(2)$.  In doing so, we will work with a particularly simple representation of this Lie algebra and the choice of $\eta$ will arise naturally from this.  In contrast, if we were to fix a choice of $\eta$ first, the resulting realization of $\fso^*(4)$ could be quite complicated.
  
 Recall that $\bbH$ is the associative $\bbR$-algebra with $\bbR$-basis $1,\bi,\bj,\bk$, standard relations $\bi\bj = \bk,\, \bi^2 = \bj^2 = \bk^2 = -1$, conjugation satisfying $\overline{q_1 q_2} = \overline{q_2} \,\overline{q_1}$, and norm $|q|^2 = q\overline{q} = \overline{q} q$.  Let $\SU(2)$ denote the unit quaternions, which act on $\bbH$ on the left.  Let $\widehat{\SL}(2,\bbR)$ denote $2\times 2$ real matrices with determinant $\pm 1$, which act on $\bbR^2$.  The group $S = \widehat\SL(2,\bbR) \times \SU(2)$ acts on the external tensor product $\bbR^2 \otimes_\bbR \bbH$, and we identify this naturally with $\bbH^2$.  This identification is $S$-equivariant if for $q = \begin{psmallmatrix} q_1\\ q_2 \end{psmallmatrix} \in \bbH^2$, we declare that $A \in \widehat{\SL}(2,\bbR)$ and $q_0 \in \SU(2)$ each act by multiplication on the left, with the latter identified with $\diag(q_0,q_0)$.  (These actions commute.)  While $\bbH^2$ is naturally a {\em right} $\bbH$-vector space, it will be more important for us to consider it as a $\bbC$-vector space by restricting this right action to $\bbC := \{ 1s + \bi t : s,t \in \bbR\} \subset \bbH$.  (This in particular distinguishes the imaginary unit $\bi$.)  We will be interested in the 5-dimensional $S$-orbits in $\bbC\bbP^3 \cong \bbP_\bbC(\bbH^2)$.
  
 There exists an $S$-invariant skew-Hermitian form on $\bbH^2$ (unique up to a real scaling) given by $\eta(q,w) = \overline{q_1} w_2 - \overline{q_2} w_1$, and this is valued in $\Im(\bbH)$.  Let $\eta(q,q) = \bi b + \bj \mu$, for $b \in \bbR$ and $\mu \in \bbC$.  Given $\lambda \in \bbC$, $\eta(q\lambda,q\lambda) = \bar\lambda\eta(q,q)\lambda$, hence $(b,\mu) \mapsto (b|\lambda|^2, \mu\lambda^2)$.  
 Writing $q = \begin{psmallmatrix} z_1 + \bj z_2\\ z_3 + \bj z_4 \end{psmallmatrix}$ for $z_j \in \bbC$, 
 \begin{align}
 \eta(q,q) &= \overline{z_1} z_3 + \overline{z_2} z_4 + \bj(z_1 z_4 - z_2 z_3).
 \end{align}
 Note that $\cQ^\#= \{ [q] : \Re(\bi\eta(q,q)) = 0 \}$ is the flat (indefinite) CR structure $\Im(\overline{z_1} z_3 + \overline{z_2} z_4) = 0$, so we exclude $b=0$.  We will also exclude the $\mu = 0$ case (see below), since this yields only 4-dimensional $S$-orbits.  When $b\mu \neq 0$, we have the complex scaling invariant $\gamma = \frac{b}{|\mu \bk|} \in \bbR \backslash \{ 0 \}$.  Since 
 $b = -\Re(\bi\eta(q,q))$ and $\mu = -\bj\eta(q,q) - b\bk$, the $S$-orbits in $\bbC\bbP^3 \backslash \cQ^\#$ satisfy
 \begin{align} \label{E:H-model1}
 \Re(\bi\eta(q,q)) = -\gamma |\bj\eta(q,q)\bk + \Re(\bi\eta(q,q))|.
 \end{align}
 Equivalently, $\Im(\overline{z_1} z_3 + \overline{z_2} z_4) = \gamma |\bi\Re(z_1 \overline{z_3} + z_2 \overline{z_4}) - (z_1 z_4 - z_2 z_3) \bk|$, which further simplifies to
 \begin{align} \label{E:H-model2}
 \Im(\overline{z_1} z_3 + \overline{z_2} z_4) 
 &= \gamma \sqrt{|\Re(z_1 \overline{z_3} + z_2 \overline{z_4})|^2 + |z_1 z_4 - z_2 z_3|^2}.
 \end{align}
 
 Let us find representatives for the $S$-orbits on $\bbC\bbP^3 \backslash \cQ^\#$.  Given $0 \neq q \in \bbH^2$, we may swap $q_1$ and $q_2$ if necessary to assume $q_1 \neq 0$, and then use $\SU(2)$ and a real rescaling to assume $q_1 = 1$.  Using $\begin{psmallmatrix} 1 & 0\\ -\Re(q_2) & 1\end{psmallmatrix} \in \SL(2,\bbR)$, we may assume $\Re(q_2) = 0$.  Using $e^{\bi\phi} \in \SU(2)$ and the right $\bbC$-action by $e^{-\bi\phi}$, we can replace $q_2 \mapsto e^{\bi \phi} q_2 e^{-\bi\phi}$, so choose $\phi$ to normalize $q_2 = \bi s + \bj t$, where $s,t \in \bbR$ with $t \leq 0$.  Finally, use $\diag(\lambda, \frac{1}{\lambda}) \in \SL(2,\bbR)$ and right multiplication by $\frac{1}{\lambda}$ to normalize $q_2$ to be of unit length.  Thus, we have $q_2 = \bi e^{\bk\theta}$, for some fixed $0 \leq \theta < \pi$.  For $q = \begin{psmallmatrix} 1\\ \bi e^{\bk\theta} \end{psmallmatrix}$, we have $\eta(q,q) = 2\bi e^{\bk\theta}$.  When $\theta = \frac{\pi}{2}$ or $0$, we have $b=0$ or $\mu = 0$, so we exclude these.  The ($S$-equivariant) conjugation arising from the identification $\bbH^2 = \bbC^4$, i.e.\ $\widetilde{u+\bj v} = \bar{u} + \bj \bar{v}$ (where $u,v \in \bbC^2$), maps $\begin{psmallmatrix} 1\\ \bi e^{\bk\theta} \end{psmallmatrix} \mapsto \begin{psmallmatrix} 1\\ \bi e^{\bk(\pi-\theta)} \end{psmallmatrix}$.  Thus, we can restrict to $\gamma = \cot(\theta) > 0$, or equivalently require $\theta \in (0,\frac{\pi}{2})$.  Hence, we can restrict to $0 < \theta < \frac{\pi}{2}$, which is in 1-1 correspondence with $\gamma = \cot\theta \in \bbR^+$.
 
 The elements $\bi,\bj,\bk$ span $\fsu(2)$, and let $\sfH = \begin{psmallmatrix} 1 &0\\ 0 & -1\end{psmallmatrix},\sfX=\begin{psmallmatrix} 0 &1\\ 0 & 0\end{psmallmatrix},\sfY=\begin{psmallmatrix} 0 &0\\ 1 & 0\end{psmallmatrix}$ span $\fsl(2,\bbR)$.  Consider the $S$-orbit $M$ through $[q] \in \bbC\bbP^3 \backslash \cQ^\#$, where $q = \begin{psmallmatrix} 1\\ \bi e^{\bk\theta} \end{psmallmatrix}$ for $0 < \theta < \frac{\pi}{2}$.
 Then $[q]$ has 1-dimensional isotropy $\fk$ spanned by $\sfT = -\bi e^{\bk\theta} + \sfX - \sfY$.  (Note $\fk = \tspan_\bbR\{ \bi, \sfX - \sfY \}$ when $\theta = 0$.)  The complex structure $J$ on $\bbC\bbP^3$ is induced from right multiplication by $\bi$ on $\bbH^2$.  Defining
 \begin{align}
  \sfw_1 &:= \bk, \quad
  \sfw_2 := \bj - 2\sin(\theta) \sfY, \quad
  \sfw_3 := \sfH, \quad
  \sfw_4 := \bi + 2\cos(\theta) \sfY,
  \end{align}
 the elements $\sfw_1 \cdot q,...,\sfw_4 \cdot q\, \mod [q]$ span the contact subspace $H_{[q]} \subset T_{[q]} M$.
 In this basis, $J = \diag(\begin{psmallmatrix} 0 & -1\\ 1 & 0\end{psmallmatrix}, \begin{psmallmatrix} 0 & -1\\ 1 & 0\end{psmallmatrix})$.  In $\tspan_\bbC\{ \sfw_1,..., \sfw_4, \sfT \}$, define
 \begin{align}
 \sfv_1 &:= i\sin(\theta) (\sfw_1 + i \sfw_2) - i(\cos(\theta)-1)(\sfw_3 + i \sfw_4) - (\cos(\theta)-1) \sfT,\\
 \sfv_2 &:= i\sin(\theta) (\sfw_1 + i \sfw_2) - i(\cos(\theta)+1)(\sfw_3 + i \sfw_4) + (\cos(\theta)+1) \sfT.
 \end{align}
 Here we have distinguished the scalar $i$ from the Lie algebra element $\bi$.  The $-i$-eigenspace $\fv / \fk$ for $J$ is spanned by $\{ \sfv_1, \sfv_2 \} \mod \sfT$, while the $+i$-eigenspace $\fe / \fk$ is spanned by $\{ \overline{\sfv_1},\overline{\sfv_2}\} \mod \sfT$.  (We caution that this conjugation fixes each of $\sfw_1,...,\sfw_4,\sfT$ and conjugates the scalars.  It is distinct from the conjugation on $\bbH$, and that associated with $\bbH^2 = \bbC^4$.)  The ILC D.6-3 structure equations are satisfied if we take
 \begin{align}
 \rho = \frac{\sqrt{3}}{4}\sqrt{1 + \sec(\theta)}, \quad
 \begin{cases}
 e_1 &= \frac{3(1+\cos(\theta))}{16\rho \cos(\theta)\sin(\theta)} \overline{\sfv_1}, \quad
 e_2 = \frac{i \rho}{1+\cos(\theta)}\overline{\sfv_2}, \\
 e_3 &= \frac{\rho}{\sin(\theta)} \sfv_1, \quad
 e_4 = \frac{3i}{16\rho\cos(\theta)} \sfv_2\\
 e_5 &= 
 \frac{3}{4} i\sec(\theta) (\bi e^{\bk\theta} + \sfX - \sfY), \quad
 e_6 = -\frac{i}{2} \sfT
 \end{cases}, \quad 
 a = 3i\tan\theta.
 \end{align}
 In particular, \framebox{$a^2 = -\frac{9}{\gamma^2} < 0$} classifies the corresponding ILC D.6-3 structure.
 
\section{Hypersurfaces of Winkelmann type}
\label{S:Winkelmann}

 Generalizing \eqref{E:Wink4}, we say that a real hypersurface in $\bbC^3$ is of \emph{Winkelmann type} if in some holomorphic coordinate system $(z_1,z_2,w)$, it is given by
\begin{equation}\label{eqWT}
\Im(w+\bar z_1 z_2) = F(z_1,\bar z_1),
\end{equation}
 where $F$ is an arbitrary real analytic function on $\bbC=\bbR^2$.  These all admit the symmetries
\begin{align*}
N_1 = z_1\partial_{z_2}, \qquad
N_2 =\partial_{z_2}+z_1\partial_{w}, \qquad
N_3 = i\partial_{z_2}-iz_1\partial_{w}, \qquad
N_4 = \partial_{w}.
\end{align*}
These span an {\em abelian} Lie algebra that induces a (holomorphic) foliation of $\bbC^3$ by 2-dimensional holomorphic hypersurfaces $z_1=\operatorname{const}$ outside of the singular set $z_1=0$.

Complexifying \eqref{eqWT}, we get the following complex hypersurfaces in $\bbC^3\times\bar\bbC^3$:
\begin{equation}\label{specILC}
w = b - a_1 z_2 + z_1 a_2 + 2iF(z_1,a_1).
\end{equation}
 Regarding $w = w(z_1,z_2)$, we obtain $w_1 := \frac{\partial w}{\partial z_1} = a_2 + 2iF_{z_1}(z_1,a_1)$ and $w_2 := \frac{\partial w}{\partial z_2} = -a_1$.  Differentiate with respect to $z_1$ and $z_2$ once more and eliminate the parameters $(a_1,a_2,b)$.  Making the variable change $z_2 \mapsto -z_2$, we arrive at the PDE system
\begin{align} \label{E:Wink-PDE}
w_{11} = 2iF_{z_1z_1}(z_1,w_2),\quad w_{12}=w_{22}=0.
\end{align}
The harmonic curvature \cite[(3.3)]{DMT2014} is of type N if $F_{z_1 z_1 w_2 w_2}\ne 0$.  Let us consider the specific Winkelmann type hypersurfaces given in Table \ref{F:Wink}.

 \begin{center}
 \begin{footnotesize}
 \begin{table}[h]
 \[
 \begin{array}{|c|c|c|} \hline
 \mbox{Model} & \mbox{ILC N.6-2 classification} & \mbox{CR symmetries aside from $N_1,N_2,N_3,N_4$}\\ \hline\hline
 \begin{array}{c}
 \Im(w + \bar{z}_1 z_2) = (z_1)^\alpha (\bar{z}_1)^{\bar\alpha}\\
 (\alpha \in \bbC \backslash \{ -1, 0, 1, 2\})
 \end{array} & \begin{array}{c} \begin{array}{c} \bar{b}^2 = a^2 = \frac{-(2\alpha-1)^2}{(\alpha+1)(\alpha-2)} \\
 \in \bbC \backslash \{ -4,\frac{1}{2} \} \end{array}\\
 \end{array}  &
 \begin{array}{c}
 z_1\partial_{z_1} + (\alpha + \bar\alpha - 1) z_2 \partial_{z_2} + (\alpha + \bar\alpha) w \partial_w,\\
 i z_1 \partial_{z_1} + i (\alpha - \bar\alpha +1) z_2 \partial_{z_2} + i(\alpha - \bar\alpha) w \partial_w
 \end{array}\\ \hline
 \Im(w + \bar{z}_1 z_2) = \exp(z_1)\exp(\bar{z}_1) & b^2 = a^2 = -4&
 \begin{array}{c}
 i\partial_{z_1} + iz_2\partial_w,\\
 \partial_{z_1} + 2z_2\partial_{z_2} + (2w - z_2)\partial_w
 \end{array}\\ \hline
 \Im(w + \bar{z}_1 z_2) = \ln(z_1) \ln(\bar{z}_1) & b^2 = a^2 = \frac{1}{2} & 
 \begin{array}{c}
 z_1\partial_{z_1} - z_2\partial_{z_2} + 2 i\ln(z_1)\partial_w,\\
 iz_1\partial_{z_1} + iz_2\partial_{z_2} + 2\ln(z_1)\partial_w
 \end{array}\\ \hline
 \end{array}
 \]
 \caption{CR structures underlying ILC N.6-2 models}
 \label{F:Wink}
 \end{table}
 \end{footnotesize}
 \end{center}

 Writing out \eqref{E:Wink-PDE} for $F(z_1,\bar{z}_1) = (z_1)^\alpha (\bar{z}_1)^{\bar\alpha}$, we obtain
 \begin{align} \label{E:Wink-PDE1}
  w_{11} = 2i \alpha(\alpha-1) (z_1)^{\alpha-2} (w_2)^{\bar\alpha}, \quad w_{12} = w_{22} = 0.
 \end{align}
 Using a constant rescaling of $z_2$, and relabelling, we can bring \eqref{E:Wink-PDE1} into the same form as that listed in \cite[Table 1.1]{DMT2014} for the N.6-2 models with $\mu = \bar\alpha$ and $\kappa = \alpha - 2$.  The $F(z_1,\bar{z}_1) = \exp(z_1)\exp(\bar{z}_1)$ and $F(z_1,\bar{z}_1) = \ln(z_1) \ln(\bar{z}_1)$ cases are handled similarly.  The corresponding ILC N.6-2 models have $\mu = \kappa = \infty$ for the former and $(\mu,\kappa) = (0,-2)$ for the latter, c.f.\ \cite[Table 1.1]{DMT2014}. 
 
  As discussed in \S \ref{S:RealForms}, the N.6-2 models are described using $(a,b) \in \bbC^2$, with $(a^2,b^2)$ being essential parameters.  Using \eqref{E:N62-params} with $\mu = \bar\alpha$ and $\kappa = \alpha - 2$, we find that for $\alpha \in \bbC \setminus \{ -1, 0, 1, 2 \}$:
 \[
 \bar{b}^2 = a^2 = \frac{-(2\alpha-1)^2}{(\alpha+1)(\alpha-2)} \in \bbC \setminus \left\{ -4, \frac{1}{2} \right\}.
 \]
 (The $\alpha = -1$ and $\alpha = 2$ cases lead to the ILC N.8 model.)  The $b^2 = a^2 \in \{ -4, \frac{1}{2} \}$ cases were described in Example \ref{EX:N62-tubular}.   From \S \ref{S:tubular}, the $b^2 = a^2 \in \bbR$ cases are tubular (see Table \ref{F:affineN} for models).
 
\begin{prop}\label{wink-domain}
 The hypersurfaces of Winkelmann type given in Table~\ref{F:Wink} are models for all real forms of the complex ILC N.6-2 structures.  Their (6-dimensional) CR symmetry algebras are never transitive outside of these hypersurfaces.
\end{prop}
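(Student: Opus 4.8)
The statement splits into two independent claims, which I would establish separately.

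\textbf{Realizing all real forms.} First recall from \S \ref{S:RealForms} that the complex ILC N.6-2 structures have essential parameters $(a^2,b^2) \in \bbC^2$, that an admissible anti-involution exists precisely when $b^2 = \bar a^2$, and that in that case it (hence the associated CR germ) is unique up to equivalence; so the CR real forms of N.6-2 are indexed by $a^2 \in \bbC$. By \eqref{E:Wink-PDE1} and the identification $\mu = \bar\alpha$, $\kappa = \alpha-2$ through \eqref{E:N62-params}, hypersurface (i) complexifies to the N.6-2 model with $b^2 = \bar a^2$ and $a^2 = f(\alpha) := \frac{-(2\alpha-1)^2}{(\alpha+1)(\alpha-2)}$. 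I would then verify that $f$ maps $\bbC \setminus \{-1,0,1,2\}$ \emph{onto} $\bbC \setminus \{-4,\tfrac12\}$: the equation $f(\alpha)=c$ rearranges to $(c+4)(\alpha^2-\alpha) = 2c-1$, which for $c \neq -4$ is a quadratic in $\alpha$ with two complex roots, and one sees directly that no root can equal $-1$ or $2$, while a root equals $0$ or $1$ exactly when $c = \tfrac12$; moreover $c = -4$ yields no root at all. The two omitted values $a^2 \in \{-4,\tfrac12\}$ are realized by hypersurfaces (ii) and (iii), which complexify to the N.6-2 models with $(\mu,\kappa) = (\infty,\infty)$ and $(0,-2)$, i.e.\ with $b^2 = a^2 = -4$ and $b^2 = a^2 = \tfrac12$ respectively; this is already contained in the discussion preceding the proposition, together with Example \ref{EX:N62-tubular}. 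Since the complexification together with its admissible anti-involution determines the CR germ (\S \ref{S:RealForms}), and this anti-involution is unique up to equivalence for each $a^2$, I conclude that the three families in Table \ref{F:Wink} realize every real form of N.6-2.

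\textbf{Non-transitivity on open subsets.} Fix one such hypersurface $M$. Since its complexification is the $6$-dimensional complex ILC structure N.6-2, Proposition \ref{P:sym} gives $\dim_\bbR \Sym(M) = 6$, with $\bbR$-basis $N_1,N_2,N_3,N_4$ together with the two further holomorphic vector fields in the last column of Table \ref{F:Wink}. My plan is a pointwise rank count. The key point is that $N_1 = z_1\partial_{z_2}$, $N_2 = \partial_{z_2}+z_1\partial_w$, $N_3 = i\partial_{z_2}-iz_1\partial_w$, $N_4 = \partial_w$ carry no $\partial_{z_1}$-component, so at any $p \in \bbC^3$ their values lie in the real $4$-plane spanned by the $z_2$- and $w$-directions; and writing out the $4\times 4$ matrix of those values at a point where $z_1 = c$ (in the real coordinates dual to $z_2$ and $w$), I would check its determinant vanishes identically --- concretely, $c\,\partial_{z_2}$ and $c\,\partial_w$ are proportional to $\partial_{z_2}$ and $\partial_w$, so the rank drops. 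Hence $\dim_\bbR \tspan\{N_1|_p,\dots,N_4|_p\} \le 3$ for every $p$, and by subadditivity of rank the image of the evaluation map $\Sym(M) \to T_p\bbC^3$ has dimension at most $3+2 = 5 < 6 = \dim_\bbR \bbC^3$. Therefore every orbit of $\Sym(M)$ has dimension $\le 5$; in particular $\Sym(M)$ admits no open orbit and is transitive on no open subset of $\bbC^3$, \emph{a fortiori} not outside $M$. (For model (iii) the two extra fields are defined only on $\bbC^3 \setminus \{z_1=0\}$, but any open set on which the action were transitive would meet this dense open set, so this suffices.)

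\textbf{Expected main obstacle.} The only step requiring a genuine computation is the surjectivity of the degree-two rational map $f$ in the first part --- pinning down that its image is exactly $\bbC \setminus \{-4,\tfrac12\}$ and that the excluded values $\alpha \in \{-1,0,1,2\}$ introduce no further gap. The second part collapses to a one-line rank argument once one observes the absence of $\partial_{z_1}$ in $N_1,\dots,N_4$. A minor point worth recording along the way is that there are no CR symmetries of $M$ beyond the six exhibited, which follows from Proposition \ref{P:sym} and the fact that the N.6-2 ILC symmetry algebra is exactly $6$-dimensional.
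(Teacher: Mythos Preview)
Your proposal is correct and follows essentially the same two-step route as the paper: invoke the uniqueness of the admissible anti-involution for each parameter value $b^2=\bar a^2$, then show the four common symmetries $N_1,\dots,N_4$ are pointwise linearly dependent so orbits are at most $5$-dimensional. Two minor remarks: (i) the paper's proof skips your surjectivity computation for $f(\alpha)$ because that discussion already appears in the text preceding the proposition, so your more explicit argument is fine but not new; (ii) for the rank drop, the paper simply exhibits the real linear relation $N_1 - cN_2 - dN_3 + (c^2+d^2)N_4 = 0$ at a point with $z_1 = c+di$, which is cleaner than computing a $4\times4$ determinant. Your heuristic that ``$c\,\partial_{z_2}$ is proportional to $\partial_{z_2}$'' is \emph{complex}-linear thinking and does not by itself give a \emph{real}-linear dependence among $2\Re(N_j)$; the determinant computation (or the explicit relation above) is what actually does the work.
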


\begin{proof}
 From Table \ref{F:AI-reps}, $b^2 = \bar{a}^2 \in \bbC$ are the parameter values that yield underlying CR structures, and in each case there is a {\em unique} structure.  Thus, Table \ref{F:Wink} gives a complete classification as claimed.

 For the second claim, let us fix a basepoint $o \in \bbC^3$ and suppose that $z_1|_o = c+di$, where $c,d \in \bbR$.  Then $N_1 - c N_2 - dN_3 + (c^2+d^2) N_4$ vanishes at $o$. Thus, these symmetry algebras have at most 5-dimensional orbits at all points in $\bbC^3$.
\end{proof}
 
\section{Transitivity of the symmetry algebra}
\label{sec:trans}

In this section, we prove Theorem \ref{T:trans}.  According to~\cite[Cor.6.36]{Merker2008}, $\Sym(M)$ is transitive on an open subset of $\bbC^3$ if and only if $\Sym(M^c)$ is transitive on an open subset of $\bbC^3\times \bar \bbC^3$.

\begin{prop} Suppose $M\subset \bbC^3$ is locally transitive, and let $(\fs,\fk;\fe,\fv)$ be an algebraic model of the ILC structure $(E,V)$ on $M^c$. Then $\Sym(M^c)$ is transitive on a non-empty open subset of $\bbC^3\times\bar \bbC^3$ if and only if for some $T\in \Int(\fs)$, we have:
 \begin{equation}\label{eq:intT}
	\fe + T(\fv) = \fs.
 \end{equation}
\end{prop}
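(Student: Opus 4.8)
The plan is to identify a neighbourhood of $M^c$ in $\bbC^n\times\bar\bbC^n$ with a product of two homogeneous spaces on which $\fs$ acts diagonally, and then invoke the standard open-orbit criterion for such an action. Let $G$ be the local group with Lie algebra $\fs$: it acts by biholomorphisms near $M^c$, commutes with the projections $\pi\colon\bbC^n\times\bar\bbC^n\to\bbC^n$ and $\bar\pi\colon\bbC^n\times\bar\bbC^n\to\bar\bbC^n$, and $\Int(\fs)=\Ad(G)$. Since $\pi|_{M^c}$ is a submersion onto the open set $U:=\pi(M^c)\subseteq\bbC^n$ and $\fs$ is transitive on $M^c$, the pushforward $\fg:=\pi_*\fs$ is transitive on $U$; by Lemma~\ref{L:proj}, $\pi_*\colon\fs\to\fg$ is an isomorphism. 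Likewise $\bar\fg:=\bar\pi_*\fs$ is transitive on $\bar U:=\bar\pi(M^c)$ and $\bar\pi_*$ is an isomorphism. As in \S\ref{S:Complexification}, $U$ is the leaf space $M^c/V$ and $\bar U$ is $M^c/E$. The first step is to note that $\fv$ is precisely the isotropy subalgebra of the $\fs$-action on $M^c/V$ at the base leaf: $X\in\fv$ means $X_{p_0}\in V_{p_0}=\ker d\pi_{p_0}$, i.e.\ $(\pi_*X)(z^0)=0$; similarly $\fe$ is the isotropy subalgebra of the action on $M^c/E$. Writing $H_1,H_2\subseteq G$ for the corresponding stabilizer subgroups, with Lie algebras $\fv$ and $\fe$, we obtain (after shrinking around the basepoint) an equivariant identification $U\times\bar U\cong G/H_1\times G/H_2$ carrying the diagonal $G$-action; moreover $U\times\bar U=\pi(M^c)\times\bar\pi(M^c)$ is an open neighbourhood of $M^c$ in $\bbC^n\times\bar\bbC^n$.

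The second step reduces ``$\Sym(M^c)$ transitive on a non-empty open subset of $\bbC^n\times\bar\bbC^n$'' to ``$G$ has a $2n$-dimensional (hence open) orbit inside $U\times\bar U$''. Transitivity on an open set is equivalent to the existence of an open orbit. Now $q\mapsto\dim_{\bbC}(G\cdot q)$ is lower semicontinuous; its maximum $m$ is attained on an open set whose complement is a proper analytic subvariety (the common zero locus of the $m\times m$ minors formed from a basis of $\fs$ in local coordinates), hence $m$ is attained on a dense set. Since $M^c$ is a single orbit we have $m\ge 2n-1$, and since $\dim_{\bbC}(\bbC^n\times\bar\bbC^n)=2n$ we get $m\in\{2n-1,2n\}$; if $m=2n$ then open orbits are dense and in particular meet the neighbourhood $U\times\bar U$ of $M^c$.

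The third step is the orbit-dimension count. For $g\in G$ the diagonal $G$-orbit through $(eH_1,gH_2)\in G/H_1\times G/H_2$ has isotropy subalgebra $\fv\cap\Ad_g\fe$, hence dimension $\dim\fs-\dim(\fv\cap\Ad_g\fe)$; and by transitivity of $\fg$ on the first factor, every point of $U\times\bar U$ is $\Int(\fs)$-conjugate to such a point, so these exhaust the orbits. Because $\fg$ and $\bar\fg$ act transitively on spaces of (complex) dimension $n$, we have $\dim\fv=\dim\fe=\dim\fs-n$, so $\dim(\fv\cap\Ad_g\fe)\ge\dim\fv+\dim\fe-\dim\fs=\dim\fs-2n$, with equality iff $\fv+\Ad_g\fe=\fs$; and the orbit is open precisely when this dimension equals $\dim\fs-2n$. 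Combining the three steps, $\Sym(M^c)$ is transitive on a non-empty open subset of $\bbC^n\times\bar\bbC^n$ iff $\fv+\Ad_g\fe=\fs$ for some $g\in G$, which, upon setting $T:=\Ad_{g^{-1}}\in\Int(\fs)$, is exactly the condition \eqref{eq:intT}.

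I expect the main obstacle to be the bookkeeping of the first step rather than any deep point: one must check that the two descriptions of $\fv$ and $\fe$ --- as tangency conditions at $p_0$ along $V$ and $E$, and as stabilizer subalgebras for the quotients $M^c/V$ and $M^c/E$ --- coincide, and that $\pi(M^c)\times\bar\pi(M^c)$ genuinely realizes the product $G/H_1\times G/H_2$ with its diagonal action; once this product picture is set up, the orbit count is routine. A secondary technical nuisance is the usual care needed with local (as opposed to global) group actions and with the density statement in the second step, both of which are standard for complex-analytic actions.
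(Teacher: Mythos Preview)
Your proof is correct and follows essentially the same strategy as the paper's: identify $\fv$ and $\fe$ as the isotropy subalgebras for the $\fs$-action on the two factors $\bbC^n$ and $\bar\bbC^n$, and then reduce transitivity at a point of the product to the subspace-sum condition $\fe+T(\fv)=\fs$ via a dimension count. The paper's argument is terser---it works directly at a chosen point $(z,a)$ where $\fs$ is transitive and observes that the projected isotropies are $\Int(\fs)$-conjugates of $\fv$ and $\fe$---whereas you set up the product $G/H_1\times G/H_2$ explicitly and add the semicontinuity/density argument to justify that an open orbit, if it exists anywhere, already occurs inside $U\times\bar U$. That extra step makes your version slightly more self-contained (the paper tacitly assumes the point $(z,a)$ can be taken so that its projections lie in the $G$-orbits of $z^0$ and $\bar z^0$), but there is no genuinely different idea at work.
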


\begin{proof}
Assume that $\fs=\Sym(M^c)$ is transitive on a non-empty open subset of $\bbC^3\times\bar \bbC^3$ and fix a point $(z,a)$ in this subset. Then the projection of $\fs$ on $\bbC^3$ is transitive on an open subset of $\bbC^3$ containing $z\in \bbC^3$. The isotropy subalgebra of this action at $z$ is conjugate to $\fv$ by means of some inner automorphism $T_1 \in \Int(\fs)$. Similarly, the projection of $\fs$ to $\bar\bbC^3$ is transitive at $a\in\bar\bbC^3$ with the isotropy subalgebra equal to $T_2(\fe)$ for some $T_2\in\Int(\fs)$.
	
Note that $\fs$ is transitive at $(z,a)$ if and only if the projection of $T_1(\fv)$ to $\bar \bbC^3$ is transitive at $a\in\bbC^3$, or, similarly, if the projection of $T_2(\fe)$ to $\bbC^3$ is transitive at $z\in\bbC^3$. Both these conditions are equivalent to the equality $T_1(\fv)+T_2(\fe)=\fs$.  Applying $T_2^{-1}$ to both sides of this equality we get $\fe+T(\fv)=\fs$, where $T=T_2^{-1}T_1$.
\end{proof} 

\begin{cor}\label{cor:compl}
The symmetry algebra $\fs = \Sym(M^c)$ is transitive on a non-empty open subset of $\bbC^3\times\bar \bbC^3$ if and only if the set of all $\epsilon\in \bbC$ satisfying: 
 \[
 \fe + \exp(\epsilon X)(\fv) = \fs
 \]	
 is non-empty for any $X\in\fs$ not contained in $\fe+\fv$.
\end{cor}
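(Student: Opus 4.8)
The plan is to read the Corollary off from the preceding Proposition, which already reduces the question to whether $\fe + T(\fv) = \fs$ for some $T \in \Int(\fs)$; what remains is to show this is equivalent to the one-parameter-subgroup condition in the statement. The first step is to analyze, for a \emph{fixed} $X \in \fs$, the set $S_X := \{\epsilon \in \bbC : \fe + \exp(\epsilon X)(\fv) = \fs\}$. Let $\pi \colon \fs \to \fs/(\fe+\fv)\cong\bbC$ be the quotient map (recall $\fe+\fv$ has codimension one). Since $\fe \subseteq \fe+\fv = \ker\pi$, one has $\fe + \exp(\epsilon X)(\fv) = \fs$ exactly when $\pi$ does not vanish identically on $\Ad_{\exp(\epsilon X)}(\fv)$; and for each $v \in \fv$ the function $\epsilon \mapsto \pi\bigl(\Ad_{\exp(\epsilon X)} v\bigr) = \sum_{k \geq 0} \frac{\epsilon^k}{k!}\,\pi\bigl((\ad X)^k v\bigr)$ is entire. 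Hence $S_X$ is either empty or the complement of a discrete subset of $\bbC$ (so non-empty), and it is empty precisely when $\pi\bigl((\ad X)^k v\bigr) = 0$ for all $v \in \fv$ and all $k \geq 0$, i.e.\ when the smallest $\ad X$-invariant subspace $W_X \subseteq \fs$ containing $\fv$ satisfies $W_X \subseteq \fe+\fv$. Thus ``$S_X \neq \emptyset$'' is the (Zariski-open in $X$) condition ``$W_X \not\subseteq \fe+\fv$''.

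The reverse implication of the Corollary is then immediate: since $[\fe,\fv] \not\subseteq \fe+\fv$, one may pick $x \in \fe$, $y \in \fv$ with $X := [x,y] \notin \fe+\fv$, and applying the hypothesis to this $X$ produces $\epsilon$ with $\fe + \exp(\epsilon X)(\fv) = \fs$; then $T := \exp(\epsilon X) \in \Int(\fs)$ witnesses the criterion of the Proposition. For the forward implication, suppose $\fe + T_0(\fv) = \fs$ for some $T_0 \in \Int(\fs)$. The set $\Omega := \{T \in \Int(\fs) : \fe + T(\fv) = \fs\}$ equals the non-vanishing locus of the holomorphic map $T \mapsto \pi \circ \Ad_T|_\fv \in \tHom(\fv,\bbC)$, hence is the complement of a proper closed analytic subvariety of the connected complex Lie group $\Int(\fs)$, so $\Omega$ is open and dense. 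Since $\exp \colon \fs \to \Int(\fs)$ is a biholomorphism near $0$, its image contains a neighbourhood of $\id$ and so meets the dense set $\Omega$; thus $\exp^{-1}(\Omega)$ is a non-empty open subset of $\fs$, and as $\fe+\fv$ is a proper subspace one can choose $X_* \in \exp^{-1}(\Omega) \setminus (\fe+\fv)$. Then $1 \in S_{X_*}$, so the Zariski-closed cone $B := \{X \in \fs : W_X \subseteq \fe+\fv\}$ does not contain the point $X_*$ of $\fs \setminus (\fe+\fv)$.

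The step I expect to be the main obstacle is upgrading this to what the Corollary actually asserts, namely that $B$ contains \emph{no} point of $\fs \setminus (\fe+\fv)$ once $\Omega \neq \emptyset$ --- equivalently, ruling out the existence of an $X \notin \fe+\fv$ with $\exp(\epsilon X)(\fv) \subseteq \fe+\fv$ for all $\epsilon \in \bbC$ whenever $\fs$ is transitive on an open set. I would try to show that the presence of such an $X$ already forces $\fe + T(\fv) \neq \fs$ for \emph{every} $T \in \Int(\fs)$, contradicting $\Omega \neq \emptyset$: here one would exploit that $\fe+\fv$ is the (complexified) contact subspace $C$ and $\fv$ the fibre direction $V$, so that such an $X$ would mean every symmetry lying in $V$ at the point $\exp(\epsilon X)\cdot o$ is merely tangent to $C$ at $o$ for all $\epsilon$, a degeneracy incompatible with $\fs$ having an open orbit on $\bbC^3 \times \bar\bbC^3$. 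Failing a uniform argument, this exclusion can instead be checked directly in a Cartan basis on the finite list of candidates with an open orbit left by the classification (which, by Theorem~\ref{T:trans}, is short).
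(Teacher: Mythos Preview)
Your argument has a genuine error and, separately, misses the key idea that makes the forward implication go through uniformly. The claimed equivalence ``$\fe + \exp(\epsilon X)(\fv) = \fs$ exactly when $\pi$ does not vanish identically on $\Ad_{\exp(\epsilon X)}(\fv)$'' is false: with $\pi\colon\fs\to\fs/(\fe+\fv)$, nonvanishing of $\pi$ on a subspace $W$ only says $W\not\subset\fe+\fv$, which guarantees $\fe+W$ surjects onto $\fs/(\fe+\fv)$ but not that $\fe+W=\fs$. (Concretely, $\dim\fe=\dim\fk+2$ and $\dim\fs=\dim\fk+5$, so $\fe+W=\fs$ needs $\dim(\fe\cap W)=\dim\fk-1$, whereas $W\not\subset\fe+\fv$ only gives $\dim(\fe\cap W)\le\dim\fk+1$.) Consequently your characterization of $B=\{X:S_X=\emptyset\}$ as $\{X:W_X\subset\fe+\fv\}$ is not justified, and the analytic/Zariski structure you deduce for $S_X$ from it is unsupported.

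Even granting a corrected analytic description of $S_X$, your forward implication remains incomplete: you only produce \emph{some} $X_*\notin\fe+\fv$ with $S_{X_*}\neq\emptyset$, and you acknowledge you cannot upgrade this to all such $X$. Your proposed fallback, checking the short list from Theorem~\ref{T:trans}, is circular: that theorem is proved in \S\ref{sec:trans} via Corollary~\ref{cor:center}, whose proof invokes the present corollary. The missing idea is a factorization that exploits the fact that $\fe$ and $\fv$ are \emph{subalgebras}. Fix any $X\notin\fe+\fv$ and choose a linear splitting $\fs=V_1\oplus\bbC X\oplus V_2$ with $V_1\subset\fe$ and $V_2\subset\fv$; then $(X_1,\epsilon,X_2)\mapsto\exp(X_1)\exp(\epsilon X)\exp(X_2)$ is a local biholomorphism near $0$ onto a neighbourhood of $\id$ in $\Int(\fs)$. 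For $T$ in this neighbourhood, $\fe+T(\fv)=\fs$ is equivalent, after left-multiplying by $\exp(-X_1)$ and using that $\exp(X_1)$ preserves $\fe$ and $\exp(X_2)$ preserves $\fv$, to $\fe+\exp(\epsilon X)(\fv)=\fs$. Since (as you correctly note) $\Omega=\{T:\fe+T(\fv)=\fs\}$ is open and dense in the connected group $\Int(\fs)$ whenever nonempty, it meets this neighbourhood, giving the required $\epsilon$ for the \emph{given} $X$. This handles the forward direction uniformly, with no case analysis.
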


\begin{proof}
Fix $X \in \fs \backslash (\fe + \fv)$, and decompose $\fs$ into a direct sum of linear subspaces $V_1\oplus \bbC X \oplus V_2$, where $V_1\subset \fe$ and $V_2\subset \fv$. It is well-known that the exponential map:
 \[
 \exp\colon V_1\times \bbC \times V_2 \mapsto \Int(\fs),\quad (X_1,\epsilon, X_2)\mapsto \exp(X_1)\exp(\epsilon X)\exp(X_2)
 \]
 is locally biholomorphic in a neighborhood of $0$. Taking $T= \exp(X_1)\exp(\epsilon X)\exp(X_2)$ in \eqref{eq:intT} and multiplying both sides by $\exp(-X_1)$ we get:
 \[
 \exp(-X_1)(\fe) + \exp(\epsilon X)\exp(X_2)(\fv) = \fs
 \]
 But by construction $\exp(-X_1)(\fe)=\fe$ and $\exp(X_2)(\fv)=\fv$.
\end{proof}

\begin{cor}\label{cor:center}
 If $\fs$ has a non-trivial center, then $\fs$ is not transitive at any point of $\bbC^3\times \bar\bbC^3$.
\end{cor}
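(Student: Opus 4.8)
The plan is to reduce everything to Corollary~\ref{cor:compl} by proving the purely algebraic fact that a nonzero central element $c \in \fs$ cannot lie in $\fe + \fv$. Once this is established, the conclusion is immediate: apply Corollary~\ref{cor:compl} with $X = c$. Since $c$ is central, $\ad_c = 0$, so $\exp(\epsilon c)$ acts as the identity on $\fs$ and $\exp(\epsilon c)(\fv) = \fv$ for every $\epsilon \in \bbC$; hence $\fe + \exp(\epsilon c)(\fv) = \fe + \fv$, which is a proper (codimension-one) subspace of $\fs$. So the set of admissible $\epsilon$ is empty for this choice of $X$, and $\fs$ is not transitive on any non-empty open subset of $\bbC^3 \times \bar\bbC^3$.

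Thus the real content is to show $c \notin \fe + \fv$. I would first rule out $c \in \fk$. If $c \in \fk$, then---viewing $c$ as a holomorphic vector field on $M^c$---it vanishes at the basepoint $(z^0,\bar z^0)$, because $\fk$ is precisely the isotropy of the transitive $\fs$-action on $M^c$ there. Being central, the local flow of $c$ commutes with the whole $\fs$-action near $(z^0,\bar z^0)$; as it also fixes $(z^0,\bar z^0)$ and the action is transitive, this flow is the identity on a neighbourhood, forcing $c \equiv 0$ by analyticity, a contradiction. (Alternatively, project to $\bbC^3$ and run the same argument using Lemma~\ref{L:proj}, which guarantees $\pi_* c \neq 0$.)

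Next, suppose toward a contradiction that $c = e + v$ with $e \in \fe$ and $v \in \fv$. For every $Y \in \fv$, centrality of $c$ gives $[e,Y] = -[v,Y] \in \fv \subset \fe + \fv$, since $\fv$ is a subalgebra. Thus $e$ lies in the left radical of the non-degenerate pairing $\fe/\fk \times \fv/\fk \to \fs/(\fe+\fv)$, so $e \in \fk \subset \fv$ and hence $c \in \fv$. But then $[X,c] = 0 \in \fe+\fv$ for all $X \in \fe$, so $c$ lies in the right radical of the same pairing, giving $c \in \fk$---contradicting the previous step. Therefore $c \notin \fe + \fv$, completing the proof.

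The one place where something nontrivial is used is the step $c \notin \fk$: the non-degeneracy of the contact pairing is blind to elements of $\fk$, so this step genuinely requires the homogeneity of $M$ (equivalently, transitivity of $\fs$ on $M^c$ together with Lemma~\ref{L:proj}). Everything else is a short formal manipulation, so I do not anticipate any real obstacle.
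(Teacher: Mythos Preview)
Your proof is correct and follows essentially the same route as the paper: show that a nonzero central element $Z$ lies outside $\fe+\fv$, then invoke Corollary~\ref{cor:compl}. The paper dispatches the step $Z\notin\fk$ in one line by citing effectiveness of $\fk$ (i.e.\ $\fk$ contains no nonzero ideal of $\fs$, and $\bbC Z$ would be one), whereas you unpack this as the flow argument; and the paper uses the full skew form on $(\fe+\fv)/\fk$ rather than the pairing $\fe/\fk\times\fv/\fk$, but since $\fe$ and $\fv$ are isotropic these amount to the same thing.
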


\begin{proof}
 Let $Z$ be any central element in $\fs$. Since the subalgebra $\fk$ is effective, then $Z\notin \fk$. Recall that Levi non-degeneracy of $M\subset \bbC^3$ implies that the bilinear form:
 \[
 \Lambda^2 (\fe+\fv)/\fk \to \fs /(\fe+\fv),\quad (X+\fk)\wedge (Y+\fk)\mapsto [X,Y]+(\fe+\fv)
 \] 
 is non-degenerate. Hence, it follows that $Z$ does not lie in $\fe+\fv$. But it is obvious that $\exp(\epsilon Z)(\fv)=\fv$ for any $\epsilon\in\bbC$, and $\fs$ is not transitive according to Corollary~\ref{cor:compl}.
\end{proof}

This proposition implies that the local transitivity of $\Sym(M^c)$ on $\bbC^3 \times \bar \bbC^3$, and hence the local transitivity of $\Sym(M)$ on $\bbC^3$, is a property of the algebraic model $(\fs,\fk;\fe,\fv)$ itself and does not depend on the particular realization of this model in local coordinates.

 Transitivity off the hypersurface is well-known in the maximally symmetric and Winkelmann hypersurface cases \cite{Wink1995}.  Consider all remaining cases.  For N.7-2, D.6-1, and D.7, $\fs$ has non-trivial center, so these are ruled out by Corollary \ref{cor:center}.  (See also Example~\ref{Ex:D7} where it is computed explicitly as a Lie algebra of vector fields on $\bbC^3 \times \bar\bbC^3$.)

 The remaining cases have 6-dimensional symmetry, so it suffices to exhibit a relation amongst the symmetry vector fields. For N.6-1, tubular realizations are given in Table \ref{F:affineN} and share the symmetries  $N_1=i\partial_{z_2}$, $N_2=i\partial_w$, $N_3=\partial_{z_2}+z_1\partial_w$, and $N_4=iz_1\partial_{z_2}+i\frac{z_1^2}{2}\partial_w$. Letting $z_1=a+bi$, we have $aN_1+\frac{a^2+b^2}{2}N_2-bN_3-N_4=0$.  The N.6-2 cases were similarly ruled out in  Proposition~\ref{wink-domain}.  The D.6-2 realizations (see Table \ref{F:affineD}) share the symmetries $N_1=i\partial_{z_2}$, $N_2=i\partial_w$, $N_3=\partial_{z_2}+2z_2\partial_w$, and $N_4=iz_2\partial_{z_2}+iz_2^2\partial_w$. Letting $z_2=c+di$, we have $cN_1+(c^2+d^2)N_2-dN_3-N_4=0$.  For D.6-3 ($a^2 = 9$), the symmetries of the tubular models in Table \ref{F:affineD} have an obvious dependency.  Finally, for D.6-3 ($a^2 \neq 9$), see \S \ref{S:Cartan} where the orbits of $\Sym(M)$ are described explicitly.
 
 \section*{Acknowledgements}
 
 The authors gratefully acknowledge the use of the {\tt DifferentialGeometry} package in {\tt Maple}.  D.T. was supported by the University of Troms\o{} and project M1884-N35 of the Austrian Science Fund (FWF).

\newpage
\appendix
  
 \section{Representative admissible anti-involutions}
 \label{A:AI}
 
In Table \ref{F:AI-reps}, we classify all representative admissible anti-involutions (see \S \ref{S:RealForms}) for all non-flat 5-dimensional multiply-transitive complex ILC structures.  Each anti-involution is expressed in the same basis used in \cite[Tables 4.2--4.4]{DMT2014}.  Below, let $\epsilon = \pm 1$.
 
 \begin{center}
 \begin{tiny}
 \begin{table}[h]
 \[
 \begin{array}{|l|c|c|c|c|} \hline
 \mbox{Model}  & \mbox{Representative anti-involution} & 
 \mbox{Parameter conditions} & \mbox{Levi form type}\\ \hline\hline
 \mbox{N.8} & 
 \varphi = \diag\left( \begin{bmatrix}
     0 & 0 & 0 & 1 \\
     0 & 0 & 1 & 0 \\
     0 & 1 & 0 & 0 \\
     1 & 0 & 0 & 0
     \end{bmatrix},-1,-1,
     \begin{bmatrix} 0 & 1 \\ 1 & 0 \end{bmatrix}  \right)
 & - & \mbox{indefinite}
 \\
 \mbox{N.7-2} & \varphi^{(\epsilon)} = 
 \diag\left(
  \begin{bmatrix} 
  0 & 0 & 0 & \epsilon \\
  0 & 0 & \epsilon & 0 \\
  0 & \epsilon & 0 & 0 \\
  \epsilon & 0 & 0 & 0
  \end{bmatrix}, -1, -1, -1\right)
 & - & \mbox{indefinite}
 \\
 \mbox{N.6-1} &  
 \varphi = \diag\left( \begin{bmatrix}
0 & 0 & 0 & \tau \\
0 & 0 & \tau & 0 \\
0 & \frac{1}{\tau} & 0 & 0 \\
 \frac{1}{\tau} & 0 & 0 & 0\\
\end{bmatrix} , -1 ,-1 \right)
  & \begin{array}{c} a^2 \in \bbR \backslash \{ -1,0 \} \\ \tau = \frac{a\bar{a}}{a^2+1} \end{array} & \mbox{indefinite}\\
 \mbox{N.6-2} & \varphi = \diag\left(
 \begin{bmatrix} 
 0 & 0 & 0 & \epsilon \\
 0 & 0 & \epsilon & 0 \\
 0 & \epsilon & 0 & 0 \\
 \epsilon & 0 & 0 & 0
 \end{bmatrix}, -1, -1\right) & \begin{array}{c} b = \epsilon \bar{a}, 
 \quad a \in \bbC\\ (\mbox{Set $\epsilon=1$ if $a =0$}) \end{array} & 
 \mbox{indefinite}
\\ \hline
 \mbox{D.7} & \varphi_1^{(\epsilon_1,\epsilon_2)} = \diag\left( \begin{bmatrix}
  0 & 0 & \epsilon_1 & 0 \\
  0 & 0 & 0 & \epsilon_2 \\
  \epsilon_1 & 0 & 0 & 0 \\
  0 & \epsilon_2 & 0 & 0
  \end{bmatrix}, -1,-1,-1\right)  &  \begin{array}{c} a\in \bbR; \\ \mbox{ if } a=0 \mbox{ then } \\ (\epsilon_1,\epsilon_2)\in \{\pm (1,1), (1,-1) \} \end{array}& 
  \mbox{definite if } 
  \epsilon_1\epsilon_2=1
  \\
  & \varphi_2 = \diag\left( \begin{bmatrix}
    0 & 0 & 0 & 1 \\
    0 & 0 & 1 & 0 \\
    0 & 1 & 0 & 0 \\
    1 & 0 & 0 & 0
    \end{bmatrix},-1,-1,1 \right)
  & a \in i\bbR &  \mbox{indefinite} \\
 \mbox{D.6-1} 
 & \varphi^{(\epsilon)} = \diag\left( \begin{bmatrix}
   0 & 0 & 1 & 0 \\
   0 & 0 & 0 & \epsilon \\
   1 & 0 & 0 & 0 \\
   0 & \epsilon & 0 & 0
   \end{bmatrix},-1,-1 \right)  & - & \mbox{definite if } 
   \epsilon=1\\
  
  \mbox{D.6-2} & \varphi^{(\epsilon)} = 
  \diag\left( \begin{bmatrix}
     0 & 0 & \epsilon & 0 \\
     0 & 0 & 0 & \frac1\tau \\
     \epsilon & 0 & 0 & 0 \\
     0 & \tau & 0 & 0
     \end{bmatrix},-1,-1 \right) 
   & \begin{array}{c} a \in \bbR \backslash \{ 1, \frac{2}{3} \} \\ 
   \tau=-\frac{(3a-2)(a-1)}9 \end{array}
  & \mbox{definite if } \epsilon\tau>0\\
 \mbox{D.6-3} &  \varphi_1=
 \diag\left( \begin{bmatrix}
    0 & 0 & 0 & 1 \\
    0 & 0 & 1 & 0 \\
    0 & 1 & 0 & 0 \\
    1 & 0 & 0 & 0
    \end{bmatrix},-1,1 \right)

 & a\in\bbR \backslash \{0\}  & \mbox{indefinite}
 \\ 
  & \varphi_2^{(\epsilon)} = \diag\left( \begin{bmatrix}
   0 & 0 & \epsilon & 0 \\
   0 & 0 & 0 & \epsilon \\
   \epsilon & 0 & 0 & 0 \\
   0 & \epsilon & 0 & 0
   \end{bmatrix},-1,-1 \right) &  
 a\in \bbR \backslash \{0\}& \mbox{definite}
 \\
 & \varphi_3 = \diag\left( \begin{bmatrix}
    0 & 0 & 1 & 0 \\
    0 & 0 & 0 & -1 \\
    1 & 0 & 0 & 0 \\
    0 & -1 & 0 & 0
    \end{bmatrix},-1,-1 \right)   & a\in i \bbR \backslash \{0\}& 
    \mbox{indefinite}\\ \hline
 \end{array}
 \]

 \caption{Representative admissible anti-involutions}
 \label{F:AI-reps}
 \end{table}
 \end{tiny}
 \end{center}
 
 \begin{framed}
 There are ILC parameter redundancies resulting from certain basis changes \cite[Table A.5]{DMT2014}:
 \begin{itemize}
 \item N.6-1, D.7, and D.6-3: $a \mapsto -a$.
 \item N.6-2: $a \mapsto -a$ and $b \mapsto -b$ are independent redundancies.
 \item D.6-2: none.
 \end{itemize}
 These have no effect on anti-involutions except in the D.7 case: $(\varphi_1^{(\epsilon_1,\epsilon_2)},\varphi_2) \mapsto (\varphi_1^{(\epsilon_2,\epsilon_1)}, \varphi_2)$.
 \end{framed}
 
 \newpage 
 
 \section{Tubular hypersurfaces}
 \label{A:tubular}
 
 In Tables \ref{F:affineN} and \ref{F:affineD}, we give the complete (local) classification of (non-flat) homogeneous tubular hypersurfaces in $\bbC^3$ with non-degenerate Levi form, organized according to Petrov type.  The third column classification is given in terms of our ILC classification and the anti-involutions presented in Table \ref{F:AI-reps}.  (No anti-involution is specified if there is a unique one.)  We let $\epsilon = \pm 1$ here, and write $x = \Re(z_1), y = \Re(z_2)$, and $u = \Re(w)$.  CR parameter redundancies are indicated, e.g. $\alpha \sim -\alpha$.
 
 By Theorem \ref{s4:t1}, the structures excluded from this list are D.6-3 $(a^2 \in \bbR \backslash \{ 0, 9 \})$ and N.6-2 ($b^2 = \overline{a}^2 \in \bbC \backslash \bbR$).  These are discussed in \S \ref{S:Cartan} and \S \ref{S:Winkelmann} respectively.

 \begin{Tiny}
 \begin{table}[h]
 \[
 \begin{array}{|c@{}|@{}c@{}|@{}c@{}|c|} \hline
 \begin{array}{c}
 \mbox{Real affine surface} \\
 F(x,y,u) = 0
 \end{array}
 & 
 \begin{tabular}{c}
 Affine \\
 hom.?
 \end{tabular} 
 &
 \begin{tabular}{c}
 Classification
 \end{tabular}
 & 
 \begin{tabular}{c}
 CR syms of $F(\Re(z_1),\Re(z_2),\Re(w)) = 0$\\
 beyond $i\partial_{z_1}, i\partial_{z_2}, i\partial_{w}$
 \end{tabular}
 \\ \hline\hline
 u = xy + x^4 & \checkmark & \mbox{N.8} &
 \begin{array}{c}
 \partial_{z_2} + z_1\partial_w,\\
 i z_1\partial_{z_2} + i \frac{(z_1)^2}{2} \partial_w,\\
 z_1\partial_{z_1} + 3z_2\partial_{z_2} + 4w\partial_w,\\
 \partial_{z_1} - 6 (z_1)^2 \partial_{z_2} + (z_2 - 2(z_1)^3) \partial_w,\\
 i z_1\partial_{z_1} + i (z_2-2(z_1)^3) \partial_{z_2} + i (z_1 z_2 - \frac{(z_1)^4}{2}) \partial_w
 \end{array} \\ \hline
 u = xy + x\ln(x) & \checkmark & \begin{array}{c} \mbox{N.7-2}\\ \fsl(2,\bbR)\ltimes (V_2\oplus V_0) \\ \varphi^{(+1)} \end{array}& 
 \begin{array}{c}
 \partial_{z_2} + z_1\partial_w,\\
 i z_1\partial_{z_2} + i \frac{(z_1)^2}{2}\partial_w,\\
 z_1\partial_{z_1} - \partial_{z_2} + w\partial_w,\\
 i \frac{(z_1)^2}{2}\partial_{z_1} + i (w-z_1)\partial_{z_2} + i w z_1\partial_w 
 \end{array} \\ \hline
 \begin{array}{c} u = Xy + X\ln(X), \\ X=\exp(2x)+1 \end{array} & \times & \begin{array}{c} \mbox{N.7-2} \\ \fsu(2)\ltimes (V_2\oplus V_0) \\ \varphi^{(-1)} \end{array} & 
  \begin{array}{c}
\cosh(z_1)\partial_{z_1}-\left(\frac{1}{2}\exp(-z_1)w + \exp(z_1) \right)\partial_{z_2}+w\sinh(z_1)\partial_w,
\\
\exp(-z_1)\partial_{z_2}+2\cosh(z_1)\partial_w,\\
i\left( \exp(-z_1)\partial_{z_2}-2\sinh(z_1)\partial_w \right),
\\
i\sinh(z_1)\partial_{z_1}+i\left(\frac{1}{2}\exp(-z_1)w-\exp(z_1)\right)\partial_{z_2}+iw\cosh(z_1)\partial_w 
  \end{array} \\ \hline
 \begin{array}{c} u = xy + x^\alpha \\ (\alpha \in \bbR \backslash \{ 0, 1, 2, 3, 4 \}) \end{array} & \checkmark &
 \begin{array}{c} \mbox{N.6-1} \\ a^2 = \frac{1-\alpha}{\alpha-4}\\ \in \bbR \backslash \{ -1, -\frac{1}{4}, 0, \frac{1}{2}, 2 \} \end{array} &
 \begin{array}{c}
 \partial_{z_2} + z_1\partial_w,\\
 i z_1\partial_{z_2} + i \frac{(z_1)^2}{2}\partial_w,\\
 z_1\partial_{z_1} + (\alpha-1)z_2\partial_{z_2} + \alpha w\partial_w
 \end{array}\\ \hline
 u = xy + \ln(x) & \checkmark & 
 \begin{array}{c} \mbox{N.6-1}\\ a^2 = -\frac{1}{4} \end{array} &
 \begin{array}{c}
 \partial_{z_2} + z_1\partial_w, \\ 
 i z_1\partial_{z_2} + i\frac{(z_1)^2}{2}\partial_w,\\
 z_1\partial_{z_1} - z_2\partial_{z_2} + \partial_w
 \end{array}\\ \hline
 u = xy + x^2\ln(x) & \checkmark & 
 \begin{array}{c} \mbox{N.6-1}\\ a^2 = \frac{1}{2} \end{array} &
 \begin{array}{c}
 \partial_{z_2} + z_1\partial_w, \\ 
 i z_1\partial_{z_2} + i\frac{(z_1)^2}{2}\partial_w,\\
 z_1\partial_{z_1} + (z_2-z_1)\partial_{z_2} + 2w \partial_w
 \end{array}\\ \hline
 u = xy + x^3 \ln(x) & \times & 
 \begin{array}{c} \mbox{N.6-1}\\ a^2 = 2 \end{array} &
  \begin{array}{c}
 \partial_{z_2} + z_1\partial_w,\\ 
 i z_1\partial_{z_2} + i\frac{(z_1)^2}{2}\partial_w,\\
 z_1\partial_{z_1} + (2z_2-\frac{3}{2} (z_1)^2)\partial_{z_2} + (3w-\frac{1}{2}(z_1)^3) \partial_w
 \end{array}\\ \hline
 \begin{array}{c} u = y\exp(x) + \exp(\alpha x) \\ (\alpha \in \bbR \backslash \{ -1,0,1,2 \}; \\ 
 \alpha \sim 1-\alpha)\end{array} &
 \times
 &
 \begin{array}{c} \mbox{N.6-2} \\ b^2 = a^2 = \frac{-(2\alpha-1)^2}{(\alpha+1)(\alpha-2)} \\ \in \bbR \backslash ( [-4,0) \cup \{ \frac{1}{2} \}) \end{array} &
 \begin{array}{c}
 \partial_{z_1} + (\alpha-1) z_2 \partial_{z_2} + \alpha w \partial_w,\\
 \exp(\frac12 z_1) \partial_w + \exp(-\frac12 z_1) \partial_{z_2},\\
 i \exp(\frac12 z_1) \partial_w - i \exp(-\frac12 z_1) \partial_{z_2}
 \end{array}\\ \hline
 \begin{array}{c} u\cos(x)+y\sin(x) = \exp(\beta x) \\ (\beta \in \bbR; \, \beta \sim -\beta ) \end{array} &
 \times
 &
 \begin{array}{c} \mbox{N.6-2} \\ b^2 = a^2 = \frac{-4\beta^2}{\beta^2+9} \in  (-4,0] \end{array} &
 \begin{array}{c}
 \partial_{z_1}-(\beta z_2+w)\partial_{z_2} +(z_2-\beta w)\partial_{w}, \\
 \sin(z_1)\partial_{z_2}-\cos(z_1)\partial_w, \\
 -i\cos(z_1)\partial_{z_2}-i\sin(z_1)\partial_w
 \end{array}\\ \hline
 \begin{array}{c} u = xy + \exp(x) \end{array} & \checkmark  &
 \begin{array}{c} \mbox{N.6-2} \\ b^2 = a^2 = -4 \end{array} &
 \begin{array}{c}
 \partial_{z_2} + z_1\partial_w,\\
 iz_1 \partial_{z_2} + i \frac{(z_1)^2}{2} \partial_w,\\
 \partial_{z_1} + z_2 \partial_{z_2} + (w+z_2)\partial_w
 \end{array}\\ \hline 
 \begin{array}{c} u = y\exp(x) - \frac{x^2}{2} \end{array} & \times &
 \begin{array}{c} \mbox{N.6-2} \\ b^2 = a^2 = \frac{1}{2} \end{array} &
 \begin{array}{c}
 \partial_{z_1} - z_2 \partial_{z_2} - z_1 \partial_w,\\
 \exp(\frac12 z_1) \partial_w + \exp(-\frac12 z_1) \partial_{z_2},\\
 i\exp(\frac12 z_1) \partial_w - i\exp(-\frac12 z_1) \partial_{z_2}
 \end{array}\\ \hline 
 \end{array}
 \]
 \caption{Real affine surfaces and symmetries of corresponding tubular CR structures: type N cases}
 \label{F:affineN}
 \end{table}
 \end{Tiny} 
 
 \begin{remark} For $u = y\exp(x) + \exp(\alpha x)$ in the N.6-2 case, $\alpha=0,1$ lead to the flat model, while $\alpha=-1,2$ give alternative descriptions of the N.8 model.
 \end{remark}

\newpage 

\begin{Tiny}
 \begin{table}[h]
 \[
 \begin{array}{|c@{}|@{}c@{}|@{}c@{}|c|} \hline
 \begin{array}{c}
 \mbox{Real affine surface} \\
 F(x,y,u) = 0
 \end{array}
 & 
 \begin{tabular}{c}
 Affine \\
 hom.?
 \end{tabular} 
 &
 \begin{tabular}{c}
 Classification
 \end{tabular}
 & 
 \begin{tabular}{c}
 CR syms of $F(\Re(z_1),\Re(z_2),\Re(w)) = 0$\\
 beyond $i\partial_{z_1}, i\partial_{z_2}, i\partial_{w}$
 \end{tabular}
 \\ \hline\hline
 \begin{array}{c} u = \alpha\ln(x) + \ln(y) \\
 (\alpha \in \bbR \backslash \{ -1,0 \}; \, \alpha \sim \frac{1}{\alpha} )
  \end{array} 
 & \checkmark &
\begin{array}{c} \mbox{D.7} \\
 \fsl(2,\bbR)\times \fsl(2,\bbR) \times \bbR\\ a=\frac{3}{4}(\frac{\alpha-1}{\alpha+1}) \in \bbR \backslash \{ \pm \frac{3}{4} \}\\ 
 \varphi_1^{(-1,-1)},\, |a| < \frac{3}{4};\\
 \varphi_1^{(1,-1)}, \, a > \frac{3}{4};\\
 \varphi_1^{(-1,1)}, \, a < -\frac{3}{4}
 \end{array} &
 \begin{array}{cc}
 z_1\partial_{z_1} + \alpha\partial_w, \\
 z_2\partial_{z_2} + \partial_w, \\ 
 i(z_1)^2 \partial_{z_1} + 2i \alpha z_1 \partial_w, \\
 i(z_2)^2 \partial_{z_2} + 2 iz_2 \partial_w\\ 
 \end{array}\\ \hline
 \begin{array}{c} 
 u = \alpha\ln(X) + \ln(y), \\ X=\exp(2x)+1 \\
 (\alpha \in \bbR \backslash \{ -1,0 \}; \, \alpha \sim \frac{1}{\alpha} )
 \end{array} 
 & \times &
 \begin{array}{c} \mbox{D.7}\\
 \fsl(2,\bbR)\times \fsu(2) \times \bbR\\
 a=\frac{3}{4}(\frac{\alpha-1}{\alpha+1}) \in \bbR \backslash \{ \pm \frac{3}{4} \}\\
 \varphi_1^{(1,1)}, a < -\frac34; \\
 \varphi_1^{(1,-1)}, \, |a| < \frac{3}{4};\\
 \varphi_1^{(-1,-1)}, a > \frac34
 \end{array} &
 \begin{array}{cc}
 z_2\partial_{z_2} + \partial_w, \\ 
 i(z_2)^2 \partial_{z_2} + 2 iz_2 \partial_w,\\ 
 \cosh(z_1)\partial_{z_1} + \alpha \exp(z_1) \partial_w, \\
 i\sinh(z_1) \partial_{z_1} + i \alpha \exp(z_1) \partial_w
 \end{array}\\ \hline
 \begin{array}{c}
 u = \alpha\ln(X) + \ln(Y), \\
  X=\exp(2x)+1,\\ Y=\exp(2y)+1 \\
 (\alpha \in \bbR \backslash \{ -1,0 \}; \, \alpha \sim \frac{1}{\alpha})
 \end{array} & \times &
 \begin{array}{c} \mbox{D.7}\\
 \fsu(2)\times \fsu(2) \times \bbR \\
 a=\frac{3}{4}(\frac{\alpha-1}{\alpha+1}) \in \bbR \backslash \{ \pm \frac{3}{4} \}\\
 \varphi_1^{(1,1)},\, |a| < \frac{3}{4};\\
 \varphi_1^{(-1,1)}, \, a > \frac{3}{4};\\
 \varphi_1^{(1,-1)}, \, a < -\frac{3}{4}
 \end{array}&
 \begin{array}{cc}
 \cosh(z_1)\partial_{z_1} + \alpha \exp(z_1) \partial_w, \\
 \cosh (z_2)\partial_{z_2} + \exp(z_2) \partial_w, \\ 
 i\sinh(z_1) \partial_{z_1} + i \alpha \exp(z_1) \partial_w, \\
 i\sinh(z_2) \partial_{z_2} + i \exp(z_2) \partial_w\\
 \end{array}\\ \hline
 \begin{array}{c}
 u = \alpha \arg(ix+y) + \ln(x^2+y^2) \\
 (\alpha \in \bbR; \, \alpha \sim -\alpha)
 \end{array} & \checkmark &
 \begin{array}{c} \mbox{D.7} \\ 
 \fsl(2,\bbC)_\bbR \times \bbR\\
 \varphi_2,\, a=\frac{3}{8} i \alpha
 \end{array}
 &
 \begin{array}{c}
 z_1\partial_{z_1}+ z_2\partial_{z_2}+ \partial_{w},\\
 z_2\partial_{z_1}- z_1\partial_{z_2}+ \alpha\partial_{w},\\
 \frac{z_1^2-z_2^2}2\partial_{z_1}+ z_1 z_2\partial_{z_2}+ ( z_1-\alpha z_2)\partial_{w},\\
 z_1 z_2 \partial_{z_1}-\frac{z_1^2-z_2^2}2\partial_{z_2}+ (\alpha z_1+ z_2)\partial_{w}
 \end{array}
 \\ \hline
 u = y^2 + \epsilon \ln(x) & \checkmark &
 \begin{array}{c} \mbox{D.7} \\ \mbox{Semisimple part = } \fsl(2,\bbR) \\
 \varphi^{(\epsilon,-1)}, \,a = \frac{3}{4} \end{array} &
 \begin{array}{c}
 \partial_{z_2} + 2 z_2 \partial_w, \\
 z_1\partial_{z_1} + \epsilon \partial_w,\\
 i z_2 \partial_{z_2} + i (z_2)^2 \partial_w,\\
 i(z_1)^2 \partial_{z_1} + 2i \epsilon z_1 \partial_w
 \end{array}\\ \hline
  \begin{array}{c}
  u = y^2 + \epsilon \ln(X), \\ X=\exp(2x)+1
  \end{array} & \times &
  \begin{array}{c} \mbox{D.7} \\ \mbox{Semisimple part = } \fsu(2)\\
  \varphi^{(\epsilon,1)}, \,a = \frac{3}{4} \end{array} &
  \begin{array}{c}
  \partial_{z_2} + 2 z_2 \partial_w,\\
  i z_2 \partial_{z_2} + i (z_2)^2 \partial_w,\\
  \cosh(z_1)\partial_{z_1} + \epsilon \exp(z_1) \partial_w,\\
  i\sinh(z_1) \partial_{z_1} + i \epsilon \exp(z_1) \partial_w
  \end{array}\\ \hline
 xu = y^2 - \epsilon x\ln(x) & \checkmark &
 \begin{array}{c} \mbox{D.6.1} \\ \varphi^{(\epsilon)} \end{array} &
 \begin{array}{c}
 z_1 \partial_{z_2} + 2 z_2 \partial_w,\\
 2z_1 \partial_{z_1} + z_2 \partial_{z_2} - 2\epsilon \partial_w,\\
 i (z_1)^2 \partial_{z_1} + i z_1 z_2 \partial_{z_2} + i( -2\epsilon z_1 + (z_2)^2) \partial_w
 \end{array}\\ \hline
 \begin{array}{c} u = y^2 + \epsilon x^\alpha \quad (x > 0) \\ (\alpha \in \bbR \backslash \{ 0, 1, 2 \}) \end{array} & \checkmark & 
 \begin{array}{c} \mbox{D.6-2} \\ 
 a = \frac{2}{3}(\frac{\alpha+1}{\alpha}) \in \bbR \backslash \{ \frac{2}{3}, \frac{4}{3}, 1 \}\\
 \varphi^{(\rho)},\, \rho = \epsilon\,\sgn[ (a-\frac{2}{3})(a-\frac{4}{3})(a-1)]\end{array} &
 \begin{array}{c}
 \partial_{z_2} + 2 z_2\partial_w,\\ 
 i z_2\partial_{z_2} + i (z_2)^2\partial_w,\\
 z_1\partial_{z_1} + \frac{\alpha z_2}{2} \partial_{z_2} + \alpha w\partial_w
 \end{array}\\ \hline
 u = y^2 + \epsilon x\ln(x) & \checkmark &
 \begin{array}{c} \mbox{D.6-2} \\ \varphi^{(-\epsilon)}, a = \frac{4}{3} \end{array} & 
 \begin{array}{c}
 \partial_{z_2} + 2z_2\partial_w,\\ 
 i z_2\partial_{z_2} + i(z_2)^2\partial_w,\\
 z_1\partial_{z_1} + \frac{1}{2} z_2 \partial_{z_2} + (\epsilon z_1 + w)\partial_w
 \end{array}\\ \hline
 \begin{array}{c}
  u^2 + \epsilon_1 x^2 + \epsilon_2 y^2 = 1 \\
  (\epsilon_1,\epsilon_2) \in \{ \pm (1,1),(1,-1) \}
 \end{array} & \checkmark & \begin{array}{c} \mbox{D.6-3}\\ 
 a^2 = 9\\ 
 \begin{array}{@{}l@{\,}l@{\,}l@{}}
 \fso(1,2) \ltimes \bbR^3, &
 \varphi_1, & (\epsilon_1,\epsilon_2) = (+1,-1);\\
 \fso(3) \ltimes \bbR^3, & \varphi_2^{(+1)}, & (\epsilon_1,\epsilon_2) = (+1,+1);\\
 \fso(1,2) \ltimes \bbR^3, & \varphi_2^{(-1)}, & (\epsilon_1,\epsilon_2) = (-1,-1)
 \end{array}
 \end{array} &
 \begin{array}{c}
 \epsilon_1 z_2\partial_{z_1} - \epsilon_2 z_1\partial_{z_2},\\
 w\partial_{z_1} - \epsilon_1 z_1\partial_w,\\
 w\partial_{z_2} - \epsilon_2 z_2\partial_w
 \end{array}\\ \hline 
 \end{array}
 \]
 \caption{Real affine surfaces and symmetries of corresponding tubular CR structures: type D cases}
 \label{F:affineD}
 \end{table}
 \end{Tiny} 
 
 \begin{remark} 
 In the first three D.7 cases, $\alpha = -1$ yields the flat model.  This is also true in the D.6-2 case when $\alpha = 0,1,2$.
 \end{remark}

 \section{Loboda's models}
 \label{A:Loboda}
 
 In Table \ref{F:Loboda}, we give a dictionary between Loboda's classifications and our results. The first two series of examples describe all non-degenerate hypersurfaces with 7-dimensional symmetry algebra and indefinite~\cite{Loboda2001a} or definite~\cite{Loboda2001b} Levi form. 
 The last seven rows correspond to hypersurfaces with 6-dimensional symmetry algebra and positive-definite Levi form found in~\cite{Loboda2003}.\footnote{As noted in the Introduction, a D.6-1 real form is missing from Loboda's list.}  All equations here use the notation $z_j=x_j+i y_j$ and $w=u+iv$. 
 
 Loboda's models are not in the tubular form \eqref{s4:tube}, but we find the corresponding complex ILC structures as in \S \ref{S:Complex} (by replacing barred variables with parameters), see for instance Example \ref{Ex:Complex}.  We then proceed similarly as in the examples in \S \ref{S:tubular} to identify these models in our classification.

 \begin{tiny}
 \begin{table}[h]
 \[
 \begin{array}{|l|l|l|l|} \hline
 \mbox{Number} &
 \mbox{Surface} &
 \mbox{Parameter} &
 \mbox{Our classification}\\ 
 \hline\hline
 \mbox{7D Indef (2) } 
 &
 v=(z_1\bar z_2+z_2\bar z_1) + (1+\varepsilon |z_1|^2)\ln(1+\varepsilon |z_1|^2)
 & 
 \varepsilon=\pm1
 & \mbox{N.7-2, }\varphi^{(-\varepsilon)}
 \\
 \mbox{7D Indef (3)} 
 &
 v=e^{i\theta}\ln(1+z_1\bar z_2) + e^{-i\theta}\ln(1+z_2\bar z_1)
 & 
 \theta \in (-\frac\pi2,\frac\pi2)
 &
 \mbox{D.7, } \varphi_2,\,\, a= \frac{3i}4 \tan(\theta)
 \\
 \mbox{7D Indef (4)}
 &
 v=\ln(1-|z_1|^2)-b\ln(1-|z_2|^2)
 &
 b \in (0,1)
 &
 \mbox{D.7, } \varphi_1^{(1,-1)},\,\, a =  \frac34 \frac{1+b}{1-b}
 \\
 \mbox{7D Indef (5)} &
 v=\ln(1+|z_1|^2)+b\ln(1-|z_2|^2)
 &
 b \in (0,\infty)
 &
 \mbox{D.7, } \varphi_1^{(1,-1)},\,\, a = \frac34 \frac{1-b}{1+b}
 \\
 \mbox{7D Indef (6)} &
 v=\ln(1+|z_1|^2)-b\ln(1+|z_2|^2)
 &
 b \in (0,1)
 &
 \mbox{D.7, } \varphi_1^{(-1,1)},\,\, a = \frac34 \frac{1+b}{1-b}
 \\
 \mbox{7D Indef (7)} &
 v=|z_2|^2+\varepsilon\ln(1-\varepsilon |z_1|^2)
 &
 \varepsilon=\pm1
 &
 \mbox{D.7, } \varphi_1^{(\varepsilon,-\varepsilon)},\, a = \frac{3}{4}
 \\ \hline
 \mbox{7D Def (0.1)} &
 v=\ln(1+|z_1|^2)+b\ln(1+|z_2|^2)
 &
 b \in (0,1]
 &
 \mbox{D.7, }\varphi_1^{(1,1)},\,\, a = \frac34 \frac{1-b}{1+b} 
 \\
 \mbox{7D Def (0.2)} &
 v=\ln(1+|z_1|^2)-b\ln(1-|z_2|^2)
 &
 b \in (0,1)
 &
  \mbox{D.7, } \varphi_1^{(-1,-1)},\,\, a =  \frac34 \frac{1+b}{1-b}
 \\
  \mbox{7D Def (0.2)} &
  v=\ln(1+|z_1|^2)-b\ln(1-|z_2|^2)
  &
  b \in (1,\infty)
  &
   \mbox{D.7, } \varphi_1^{(1,1)},\,\, a =  \frac34 \frac{1+b}{1-b}
  \\
 \mbox{7D Def (0.3)} &
 v=\ln(1-|z_1|^2)+b\ln(1-|z_2|^2)
 &
 b \in (0,1]
 &
 \mbox{D.7, } \varphi_1^{(-1,-1)},\,\, a = \frac34 \frac{1-b}{1+b}
 \\
 \mbox{7D Def (0.4)} &
 v=|z_2|^2+\varepsilon\ln(1+\varepsilon|z_1|^2)
 &
 \varepsilon=\pm1
 &
 \mbox{D.7, } \varphi_1^{(\varepsilon,\varepsilon)},\,\, a=\frac34
 \\ \hline
 \mbox{6D Def (1)} &
 v=x_2^2+(1+x_1)^\alpha - 1 &
 \alpha\in (-\infty,0)\cup(1,2) &
 \mbox{D.6-2, } \varphi^{(-1)},\,\, a=\frac23\frac{\alpha+1}\alpha
 \\
\mbox{6D Def (1)} &
 v=x_2^2+(1+x_1)^\alpha - 1 &
 \alpha\in (2,\infty) &
 \mbox{D.6-2, } \varphi^{(1)},\,\, a=\frac23\frac{\alpha+1}\alpha
 \\
 \mbox{6D Def (2)} &
 v=x_2^2-(1+x_1)^\alpha + 1 &
 \alpha\in (0,1) &
\mbox{D.6-2, } \varphi^{(-1)},\,\, a=\frac23\frac{\alpha+1}\alpha
 \\
 \mbox{6D Def (3)} &
 v=x_2^2+(1+x_1)\ln(1+x_1) &
 - &
 \mbox{D.6-2, } \varphi^{(-1)},\,\, a=\frac43
 \\
 \mbox{6D Def (4),(5)} &
 \epsilon(x_1^2+x_2^2)+u^2 = 1 &
 \epsilon = \pm1 &
 \mbox{D.6-3},\, a^2 = 9,\, \varphi_2^{(\epsilon)}
 \\
 \mbox{6D Def (6)} &
 1+ \epsilon(|z_1|^2+|z_2|^2)+|w|^2 = c|1+z_1^2+z_2^2+w^2| & c>1,\, \epsilon = \pm 1 &
 \mbox{D.6-3},\, a^2 = \frac{9}{c^2} < 9,\, \varphi_2^{(\epsilon)}
 \\
 \mbox{6D Def (7)} &
 1+\epsilon(|z_1|^2+|z_2|^2)-|w|^2=c|1+z_1^2+z_2^2-w^2| & 0<c<1,\, \epsilon = \pm 1 &
 \mbox{D.6-3},\, a^2 = \frac{9}{c^2} > 9,\, \varphi_2^{(-\epsilon)} \\ \hline
 \end{array}
 \]
 \caption{Correspondence with Loboda's models
 }
 \label{F:Loboda}
 \end{table}
 \end{tiny} 
 
  \section{Homogeneous 3-dimensional CR structures}
  \label{A:CR3}

 It is well-known that all Levi non-degenerate real hypersurfaces in $\bbC^2$ admit at most an 8-dimensional symmetry algebra.  Moreover, the submaximal symmetry dimension is 3 and \'E. Cartan gave a complete local classification of all such (homogeneous) models \cite[bottom of p.70]{Cartan1932a}.  Here we outline how this classification can be alternatively derived from the well-known classification of (complex) 2nd order ODE that are homogeneous (in fact, simply-transitive, so the isotropy subalgebra is everywhere trivial) under point symmetries \cite[Table 7]{Olver1995}.\footnote{The ODE $u'' = \frac{3p^2}{2u} + cu^3$ as listed in \cite[Table 7]{Olver1995} is flat when $c=0$ (so 8 symmetries) and all $c \neq 0$ are equivalent via scalings, so we normalized $c=1$. Similar normalizations were done in the other cases.}  Set $p = u'$ below.  All parameters that yield equivalent models are indicated, e.g.\ $\gamma \sim 3-\gamma$.
 
 \begin{tiny}
 \[
 \begin{array}{|c|c|l|l|}\hline
 \mbox{Label} & \mbox{Complex ODE $u'' = f(x,u,p)$} & \mbox{Point symmetries} & \mbox{Lie algebra structure}\\ \hline\hline
 \mbox{(A)} & u'' = \frac{3p^2}{2u} + u^3 & 
 \begin{array}{l}
 e_1 = \partial_x, \quad
 e_2 = x \partial_x - u \partial_u - 2p\partial_p,\\
 e_3 = x^2 \partial_x - 2xu\partial_u - (4xp+2u)\partial_p
 \end{array} &
 \begin{array}{l}
 [e_1,e_2] = e_1\\{}
 [e_1,e_3] = 2e_2\\{}
 [e_2,e_3] = e_3
 \end{array} \\ \hline
 \mbox{(B)} & \begin{array}{c} 
 u'' = 6 u p - 4 u^3+c (p-u^2)^{3/2}\\
 (c \in \bbC \backslash \{ 0 \}; \,\,\, c \sim -c)
 \end{array} &
 \begin{array}{l}
 e_1 = \partial_x, \quad
 e_2 = x \partial_x - u \partial_u - 2p\partial_p,\\
 e_3 = x^2 \partial_x - (2xu+1)\partial_u - (4xp+2u)\partial_p \\
 \quad\qquad + \frac{c}{2} e_2 - e_1
 \end{array} &
 \begin{array}{l}
 [e_1,e_2] = e_1\\{}
 [e_1,e_3] = \frac{c}{2} e_1 + 2e_2\\{}
 [e_2,e_3] =  2e_1 - \frac{c}{2} e_2 + e_3
 \end{array} \\ \hline
 \mbox{(C)} & \begin{array}{c} u'' = p^\gamma \\ (\gamma \in \bbC \backslash \{ 0,1,2,3 \}; \,\,\,
 \gamma \sim 3-\gamma)
 \end{array} & 
 \begin{array}{l}
 e_1 = (\gamma-1) x \partial_x + (\gamma-2) u \partial_u - p\partial_p\\
 e_2 = \partial_x, \,\,\,
 e_3 = \partial_u, \\
 \end{array} & 
 \begin{array}{l}
 [e_1,e_2] = -(\gamma-1)e_2\\{}
 [e_1,e_3] = -(\gamma-2) e_3\\{}
 [e_2,e_3] = 0
 \end{array} \\ \hline
 \mbox{(D)} & u'' = e^{-p} &
 \begin{array}{l}
 e_1 = \partial_x, \,\,\, e_2 = \partial_u,\\
 e_3 = x\partial_x + (x+u)\partial_u + \partial_p
 \end{array} & 
 \begin{array}{l}
 [e_1,e_2] = 0\\{}
 [e_1,e_3] = e_1+e_2\\{}
 [e_2,e_3] = e_2
 \end{array} \\ \hline
 \end{array}
 \]
 \end{tiny}
 
 For each model, pick a general point $o$, identify the (1-dimensional) subalgebras $\fe$ and $\fv$ of the point symmetry algebra $\fs$ corresponding to the line fields $E = \tspan\{ \partial_x + p\partial_u + f\partial_p\}$ and $V = \tspan\{ \partial_p \}$ at $o$, and then classify all anti-involutions of $\fs$ that swap $\fe$ and $\fv$.  (This is tedious, but straightforward.)  All representative such admissible anti-involutions are given below.

 \begin{tiny}
  \[
 \begin{array}{|c|c|c|c|c|} \hline
 \mbox{Label} & \mbox{General point} & \fe & \fv & \mbox{Anti-involutions swapping $\fe$ and $\fv$}\\ \hline\hline
 \mbox{(A)} & x=0,\,u=1,\,p=0 & e_1 - \frac{1}{2} e_3 & e_3 & \mbox{none}\\ \hline
 \mbox{(B)} & x=0, \, u=0,\, p=1 & e_3 & e_2 & \varphi = \begin{pmatrix}
 -\epsilon & 0 & 0\\
 \frac{\epsilon}{4\sigma} (c\sigma - 2) & 0 & \frac{\epsilon}{\sigma}\\
 \frac{\epsilon}{4} (c\sigma - 2) & \sigma & 0\\
 \end{pmatrix}, \quad \epsilon = \frac{\bar{c}}{c} = \frac{\sigma}{\bar\sigma} = \pm 1, \quad \sigma^2 = \frac{4}{16 + c^2}\\ \hline
 \mbox{(C)} & x=y=0,\, p=1 & e_2 + e_3 - e_1 & e_1 & 
 \begin{array}{ll} 
 \gamma \in \bbR \backslash \{ 0,1,2,3 \}: & 
 \varphi_1 = \begin{pmatrix} 1 & 0 & 0\\ -1 & -1 & 0\\ -1 & 0 & -1\end{pmatrix}\\
 \Re(\gamma) = \frac{3}{2}: &
 \varphi_2 = \begin{pmatrix} -1 & 0 & 0\\ 1 & 0 & 1 \\ 1 & 1 & 0\end{pmatrix}
 \end{array}\\ \hline
 \mbox{(D)} & x=y=p=0 & e_1 + e_3 & e_3 & \varphi = 
 \begin{pmatrix} -1 & 0 & 1\\ 0 & -1 & 0\\ 0 & 0 & 1\end{pmatrix}\\ \hline
 \end{array}
 \]
 \end{tiny}
 
 It is easy to recognize tubular CR structures in this list.  These arise from (C) and (D), since each admits a unique abelian subalgebra $\fa$ (namely, $\tspan\{ e_2, e_3 \}$ and $\tspan\{ e_1, e_2 \}$ respectively) that is complementary to both $\fe$ and $\fv$, and satisfies the properties given in \S \ref{S:tubular}.  (None exists for (B) since $\fs \cong \fsl(2,\bbC)$.)  The listed anti-involutions preserve $\fa$ in these cases, and since $\dim( N(\fa) / \fa) = 1$, the base curve for the tubular CR hypersurface model is affine homogeneous.  The classification (up to affine equivalence) of locally homogeneous curves in the affine plane consists of lines, quadrics, and the curves given below.
 
 \begin{tiny}
 \[
 \begin{array}{|c|c|c|} \hline
 \mbox{Real affine curve $F(x,u) = 0$}
 & \mbox{Classification} & 
 \begin{tabular}{c}
 CR syms of $F(\Re(z),\Re(w)) = 0$ beyond $i\partial_{z}, i\partial_{w}$
 \end{tabular}
 \\ \hline\hline
 \begin{array}{c} u = x^a\\
 (a \in \bbR \backslash \{ 0,\frac{1}{2},1,2 \}; \,\, a \sim \frac{1}{a})
 \end{array}
 & \mbox{(C)}, \, \gamma = \frac{a-2}{a-1},\, \varphi_1
 & z\partial_z + a w\partial_w \\ \hline
 u = x\ln(x) 
 & \mbox{(D)} 
 & z\partial_z + (z+w)\partial_w \\ \hline
 \begin{array}{c}
 x^2 + u^2 = \exp(b \arg(x+iu)) \\
 (b \in \bbR; \,\,\, b \sim -b)
 \end{array} 
 & \mbox{(C)},\, \gamma = \frac{3}{2} \pm \frac{b}{4} i,\, \varphi_2 
 & (b z - 2w)\partial_z + (b w + 2 z) \partial_w \\ \hline
 \end{array}
 \]
 \end{tiny}
 
 Upon complexification (\S \ref{S:Complexification}), $u = x^a$ and $u = x\ln(x)$ lead to the CR models underlying (C) (with $\gamma = \frac{a-2}{a-1} \in \bbR \backslash \{ 0,1,2,3\}$ and anti-involution $\varphi_1$) and (D).  This is not so straightforward for $x^2 + u^2 = \exp(b\arg(x+iu))$, so we instead work abstractly and align the Lie algebra data.  Letting
 \[
 L_1 = (bz - 2w) \partial_z + (bw+2z) \partial_w, \quad
 L_2 = i\partial_z, \quad L_3 = i\partial_w,
 \]
 we have
 \[
 [L_1,L_2] = -b L_2 - 2 L_3, \quad [L_1, L_3] = 2 L_2 - b L_3, \quad [L_2,L_3] = 0.
 \]
 On the other hand, the anti-involution $\varphi_2$ from case (C) has real fixed point set spanned by
 \[
 E_1 = 4 i (e_1 - e_2), \quad E_2 = e_2 + e_3, \quad E_3 = i(e_2 - e_3).
 \]
 Since we must have $\gamma = \frac{3}{2} + ti$, then
 \[
 [E_1,E_2] = 4t E_2 - 2 E_3, \quad
 [E_1,E_3] = 2 E_2 + 4t E_3, \quad [E_2,E_3] = 0.
 \]
 To align the structures, take $t = -\frac{b}{4}$.  Clearly $(x,u) \mapsto (x,-u)$ induces the parameter redundancy $b \mapsto -b$, so $\gamma = \frac{3}{2} \pm \frac{b}{4} i$.  Hence, all CR structures underlying (C) and (D) have been accounted for.
 
 All ODE models for (B) admit $\fs = \fsl(2,\bbC) \cong \fso(3,\bbC)$ symmetry.  We show that all underlying CR structures are exhausted by the Cartan hypersurfaces $(z,\bar{z}) = \beta|(z,z)|$ as in \eqref{E:Cartan-hyp}.  Since the classified anti-automorphisms for (B) are very complicated, we proceed in a different manner.  Note that $\fs$ has two real forms: $\fsl(2,\bbR) \cong \fso(2,1)$ and $\fsu(2) \cong \fso(3)$.  For each, a left-invariant CR structure is uniquely determined by a two-dimensional subspace $C$ with $[C,C] \not\subset C$, and a complex structure $J\colon C\to C$. The subspace $C$ is uniquely determined by its Killing perp $C^{\perp}$. In $\fsu(2)$, $[C,C] \not\subset C$ always.  In $\fsl(2,\bbR)$, $[C,C]\subset C$ if and only if $C^{\perp}$ is spanned by a nilpotent element, and $C$ is conjugate to the subalgebra of upper-triangular matrices in $\fsl(2,\bbR)$.  (Exclude this last case.)
	
Identifying $J$ with a $2\times 2$ real matrix satisfying $J^2 = -1$, we have $J^2 - \tr(J) J + \det(J) =0$.  If $\tr(J) \neq 0$, then $J$ is a real scalar matrix, which cannot satisfy $J^2 = -1$.  Thus, $\tr(J) = 0$ and $\det(J) = 1$.  Hence, $J = \left(\begin{smallmatrix} a & b \\ c & -a \end{smallmatrix}\right)$ with $a^2+bc=-1$, i.e.\ $c = -\frac{a^2+1}{b}$.
	
Let us classify all $J\colon C \to C$ up to automorphisms stabilizing $C$.  For example, in $\fsl(2,\bbR)$ consider the case of $C^\perp$ spanned by $h = \left(\begin{smallmatrix} 1 & 0 \\ 0 & -1 \end{smallmatrix}\right)$. Then $C$ has basis $e_1 = \left(\begin{smallmatrix} 0 & 1 \\ -1 & 0 \end{smallmatrix}\right)$, $e_2 = \left(\begin{smallmatrix} 0 & 1 \\ 1 & 0 \end{smallmatrix}\right)$, and $\Ad_{\exp(th)}$ is represented in this basis by $T= \left(\begin{smallmatrix}\cosh(2t) & \sinh(2t) \\ \sinh(2t) & \cosh(2t) \end{smallmatrix}\right)$.  With $J$ as above, we find that $TJT^{-1}$ is anti-diagonal iff $\tanh(4t) = \frac{2a}{b-c} = \frac{2ab}{a^2+b^2+1}$.  This is valued in $(-1,1)$, so such $t$ exists.  Thus, $J$ can always be brought to the form $\left(\begin{smallmatrix}0 & -\alpha \\ 1/\alpha & 0 \end{smallmatrix}\right)$.  Note that $C$ is also stable under $\Ad \left(\begin{smallmatrix} 0 & 1 \\ 1 & 0 \end{smallmatrix}\right)$. It induces the transformation $e_1\mapsto -e_1$, $e_2\mapsto e_2$ and thus the parameter equivalence $\alpha \sim -\alpha$.  Performing similar computations for other subspaces $C$, we get the following list of all algebraic models for case~(B).

\begin{tiny}
	\[
	\begin{array}{|c|c|c|c|c|} \hline
	\mbox{Real form of $\fsl(2,\bbC)$} & \mbox{Basis of $C^\perp$} 
	& \mbox{Basis of the contact plane $C$} & \mbox{Complex structure}
	& \mbox{Parameter equivalence} 
	\\ \hline\hline
	\fsu(2) & \begin{pmatrix} i & 0 \\ 0 & -i \end{pmatrix} 
	& e_1 = \begin{pmatrix} 0 & 1 \\ -1 & 0 \end{pmatrix}, \,\,
	e_2 = \begin{pmatrix} 0 & i \\ i & 0 \end{pmatrix} & 
	\begin{pmatrix} 0 & -\alpha \\ 1/\alpha & 0 \end{pmatrix}
	& \alpha \sim -\alpha, 1/\alpha
	\\ \hline
	\fsl(2,\bbR) & \begin{pmatrix} 0 & 1 \\ -1 & 0 \end{pmatrix} 
	& e_1 = \begin{pmatrix} 1 & 0 \\ 0 & -1 \end{pmatrix}, \,\,
	e_2 = \begin{pmatrix} 0 & 1 \\ 1 & 0 \end{pmatrix} & 
	\begin{pmatrix} 0 & -\alpha \\ 1/\alpha & 0 \end{pmatrix}
	& \alpha \sim -\alpha, 1/\alpha
	\\ \hline
	\fsl(2,\bbR) & \begin{pmatrix} 1 & 0 \\ 0 & -1 \end{pmatrix} 
	& e_1 = \begin{pmatrix} 0 & 1 \\ -1 & 0 \end{pmatrix}, \,\,
	e_2 = \begin{pmatrix} 0 & 1 \\ 1 & 0 \end{pmatrix} & 
 	\begin{pmatrix} 0 & -\alpha \\ 1/\alpha & 0 \end{pmatrix}
	& \alpha \sim -\alpha
	\\ \hline
	\end{array}
	\]
\end{tiny}
The first of these cases is treated in detail in~\cite{Cap2006}. The two others are their non-compact analogues.

To construct the local models, we proceed as in \S \ref{S:Cartan-hyp} and just give a summary here.  Given $z = (z_1,z_2,z_3)^\top$, consider $(z,z) = \sum_{j=1}^3 \epsilon_j (z_j)^2$, where $\epsilon_j = \pm 1$.  Then $\fso(3,\bbC)$ is identified with the $\bbC$-span of $\sfZ_{jk} = \epsilon_j z_j \partial_{z_k} - \epsilon_k z_k \partial_{z_j}$, where $1 \leq j < k \leq 3$.  Let $\cQ = \{ [z] : (z,z) = 0 \}$. 
Given $[z]$ in a real 3-dimensional orbit of $\tO(3)$ or $\tO(2,1)$ in $\bbC\bbP^2 \backslash \cQ$, $v = \Re(z)$ and $w = \Im(z)$ span a real 2-plane $\Pi$ in $\bbR^3$.   Using complex multiplication, we can assume $(v,w) = 0$.  Taking $(\cdot,\cdot)|_\Pi$ non-degenerate, the following are normal forms for $[z] \in \bbC\bbP^2 \backslash \cQ$ (which slightly differ from those given in \S \ref{S:Cartan-hyp}):
 \begin{enumerate}
 \item $\Pi$ is positive-definite and the signature of the scalar product $(z,z)$ is $(+++)$ or $(++-)$: $z = (1,iy,0)$, where $0 < y < 1$.  Then $\beta = \frac{1+y^2}{1-y^2} > 1$.
 \item $\Pi$ is indefinite: assuming signature $(++-)$, we have: $z = (1,0,iy)$, where $0 < y \ne 1$.  Then $\beta = \frac{1-y^2}{1+y^2}$ satisfies $0< |\beta| < 1$.
 \end{enumerate}
Matching these orbits with the above algebraic models is straightforward.  Fix the affine chart $(1,z_2,z_3)$ in $\bbC\bbP^2$.  For case (1), the orbit of $\tO(3)$ or $\tO(2,1)$ has 3-dimensional real tangent space in $\bbC^2 \cong T_{[z]}\bbC\bbP^2$ given by $\langle (1-y^2)\partial_{z_2}, \partial_{z_3}, iy \partial_{z_3}\rangle$.  (The scaling $z_1 \partial_{z_1} + z_2 \partial_{z_2} + z_3 \partial_{z_3}$ on $\bbC^3$ induces a trivial action on $\bbC\bbP^2$, so in the given chart, we can make the substitutions $\partial_{z_1} = -z_2 \partial_{z_2} - z_3 \partial_{z_3}$ into $\sfZ_{jk}$.)  Then $C = \langle \partial_{z_3}, iy \partial_{z_3} \rangle$, and multiplication by $i$ is represented by $\left(\begin{smallmatrix} 0 & -y \\ 1/y & 0 \end{smallmatrix}\right)$ in this basis.  The range $0 < y < 1$ matches with the first two cases in the table above (namely, set $y=\alpha$).  Case (2) is handled similarly.

\bibliographystyle{amsplain}

\end{document}